\numberwithin{equation}{section}
\newtheoremstyle{slplain}
  {\topsep}
  {\topsep}
  {\slshape}
  {0pt}
  {\bfseries}
  {.}
  {0.5em}
  {}
\theoremstyle{slplain}
  \newtheorem{THM}{Theorem}[section]
  \newtheorem{LEM}[THM]{Lemma}
  \newtheorem{PROP}[THM]{Proposition}
  \newtheorem{COR}[THM]{Corollary}
\theoremstyle{definition}
  \newtheorem{EX}{Example}[section]
\newcommand\scat[1]{\langle#1\rangle}
\renewcommand{\le}{\leqslant}
\renewcommand{\ge}{\geqslant}
\newcommand{\0}{\varnothing}
\renewcommand{\sec}{\cap}
\renewcommand{\phi}{\varphi}
\renewcommand{\epsilon}{\varepsilon}
\renewcommand{\AA}{\mathbf{A}}
\newcommand{\BB}{\mathbf{B}}
\newcommand{\CC}{\mathbf{C}}
\newcommand{\Ch}{\mathbf{Ch}}
\newcommand{\DD}{\mathbf{D}}
\newcommand{\KK}{\mathbf{K}}
\newcommand{\NN}{\mathbb{N}}
\newcommand{\QQ}{\mathbb{Q}}
\renewcommand{\SS}{\mathbf{S}}
\newcommand{\union}{\cup}
\newcommand{\restr}[2]{\hbox{$#1$}\hbox{$\upharpoonright$}_{#2}}
\newcommand{\Boxed}[1]{\mbox{$#1$}}
\newcommand{\id}{\mathrm{id}}
\newcommand{\Ob}{\mathrm{Ob}}
\newcommand{\Sub}{\mathbf{Sub}}
\newcommand{\dom}{\mathrm{dom}}
\newcommand{\cod}{\mathrm{cod}}
\newcommand{\mor}{\mathit{mor}}
\newcommand{\toCC}[1]{\overset{#1}{\longrightarrow}}
\newcommand{\fin}{\mathit{fin}}
\newcommand{\Gra}{\mathbf{Gra}}
\newcommand{\calA}{\mathcal{A}}
\newcommand{\calB}{\mathcal{B}}
\newcommand{\calC}{\mathcal{C}}
\newcommand{\calP}{\mathcal{P}}
\newcommand{\calR}{\mathcal{R}}
\newcommand{\calS}{\mathcal{S}}
\newcommand{\calT}{\mathcal{T}}
\newcommand{\calU}{\mathcal{U}}
\newcommand{\calV}{\mathcal{V}}
\newcommand{\iso}{\mathrm{iso}}
\newcommand{\Aut}{\mathrm{Aut}}
\title{Ramsey degrees: big v.\ small\footnote{%
  This revised version of the paper differs from the published one in the formulation of
  Theorem \ref{rdbas.thm.minT}; specifically, it now includes the additional assumption that 
  $S$ is locally finite. My thanks go to Maximilian Strohmeier for pointing out the omission.}}
\author{%
  Dragan Ma\v sulovi\'c\\
  University of Novi Sad, Faculty of Sciences\\
  Department of Mathematics and Informatics\\
  Trg Dositeja Obradovi\'ca 3, 21000 Novi Sad, Serbia\\
  e-mail: dragan.masulovic@dmi.uns.ac.rs}
\begin{document}
\maketitle

\begin{abstract}
  In this paper we investigate algebraic properties of big Ramsey degrees
  in categories satisfying some mild conditions.
  As the first nontrivial consequence of the generalization we advocate in this paper
  we prove that small Ramsey degrees are the minima of the corresponding big ones.
  We also prove that big Ramsey degrees are subadditive and show that equality is enforced by an abstract property of objects
  we refer to as self-similarity. Finally,
  we apply the abstract machinery developed in the paper to show that if a countable relational
  structure has finite big Ramsey degrees, then so do its quantifier-free reducts. In particular, it follows that
  the reducts of $(\QQ, \Boxed<)$, the random graph, the random tournament and $(\QQ, \Boxed<, 0)$
  all have finite big Ramsey degrees.

  \bigskip

  \noindent \textbf{Key Words:} small Ramsey degrees, big Ramsey degrees

  \noindent \textbf{AMS Subj.\ Classification (2010):} 05C55, 18A99
\end{abstract}

\section{Introduction}

Generalizing the finite version of Ramsey's Theorem \cite{Ramsey}, the structural Ramsey theory originated at
the beginning of 1970’s in a series of papers (see \cite{N1995} for references).
We say that a class $\KK$ of finite structures has the \emph{Ramsey property} if the following holds:
for any number $k \ge 2$ of colors and all $\calA, \calB \in \KK$ there is a $\calC \in \KK$ such that
$$
  \calC \longrightarrow (\calB)^\calA_k.
$$
The above is a symbolic way of expressing that
no matter how we color the copies of $\calA$ in $\calC$ with $k$ colors, one can always find a \emph{monochromatic} copy
$\calB'$ of $\calB$ in $\calC$ (that is, all the copies of $\calA$ that fall within $\calB'$ are colored by the same color).

Many natural classes of structures such as finite graphs and finite posets
do not have the Ramsey property. Nevertheless, many of these classes enjoy the weaker property of \emph{having
finite (small) Ramsey degrees} first observed in~\cite{fouche97}.
An integer $t \ge 1$ is a \emph{(small) Ramsey degree of a structure} $\calA \in \KK$ if it is the smallest
positive integer satisfying the following: for any $k \ge 2$ and
any $\calB \in \KK$ there is a $\calC \in \KK$ such that
$$
  \calC \longrightarrow (\calB)^\calA_{k, t}.
$$
This is a symbolic way of expressing that 
no matter how we color the copies of $\calA$ in $\calC$ with $k$ colors, one can always find a \emph{$t$-oligochromatic} copy
$\calB'$ of $\calB$ in $\calC$ (that is, there are at most $t$ colors used to color the copies of $\calA$ that fall within $\calB'$).
If no such $t \ge 1$ exists for an $\calA \in \KK$, we say that $\calA$ \emph{does not have finite (small) Ramsey degree.}
For example, finite graphs, finite posets and many other classes of finite structures
are known to have finite (small) Ramsey degrees \cite{fouche97,fouche98,fouche99}.

The infinite version of Ramsey's Theorem \cite{Ramsey} claims that given a finite chain $n$,
no matter how we color the copies of $n$ in the chain $\omega = \{0, 1, 2, \ldots\}$ with $k$ colors, one can always find a monochromatic
copy of~$\omega$ inside~$\omega$. Interestingly, the same is not true for $\QQ$. One can
easily produce a Sierpi\'nski-style coloring of two-element subchains of $\QQ$ with two colors
and with no monochromatic copy of~$\QQ$.
However, for every coloring of two-element subchains of $\QQ$ with $k$ colors
one can always find a 2-oligochromatic copy of $\QQ$
\cite{galvin1,galvin2}. This result was then generalized in \cite{devlin}
where for each $m$ a positive integer $T_m$ was computed so that
for every coloring of $m$-element subchains of $\QQ$ one can always find a $T_m$-oligochromatic copy of $\QQ$.
The integer $T_m$ is referred to as the \emph{big Ramsey degree of~$m$ in~$\QQ$}.

Following \cite{KPT} where the study of this general notion was explicitly suggested for the first time,
an integer $T \ge 1$ is a \emph{big Ramsey degree of a finite substructure $\calA$ of a countably infinite
structure $\calU$}  if it is the smallest positive integer such that
for every coloring $\chi : \binom \calU \calA \to k$ one can always find a $T$-oligochromatic copy of $\calU$ inside $\calU$.
(Here, $\binom \calU \calA$ denotes the set of all the substructures of $\calU$ isomorphic to $\calA$.)
If no such $T$ exists, we say that \emph{$\calA$ does not have big Ramsey degree in $\calU$}.
We denote the big Ramsey degree of $\calA$ in $\calU$ by $T(\calA, \calU)$, and write
$T(\calA, \calU) = \infty$ if $\calA$ does not have the big Ramsey degree in~$\calU$.
We say that a countably infinite structure $\calU$ \emph{has finite big Ramsey degrees} if
$T(\calA, \calU) < \infty$ for every finite substructure $\calA$ of~$\calU$.

As the structural Ramsey theory evolved, it has become evident that the Ramsey property for a class of objects
depends not only on the choice of objects, but also on the choice of morphisms involved
(see \cite{GLR,Nesetril,mu-pon,vanthe-ufcs,P-V,Zucker-1}). This is why we believe that
category theory is a convenient ambient to consider Ramsey-theoretic notions.
It was Leeb who pointed out already in 1970 \cite{leeb-cat} that the use of category theory can be quite helpful
both in the formulation and in the proofs of results pertaining to structural Ramsey theory.
However, instead of pursuing the original approach by Leeb (which has very fruitfully been applied to a wide range of
Ramsey-type problems \cite{GLR, leeb-cat, Nesetril-Rodl}),
we proposed in~\cite{masulovic-ramsey} a systematic study of
a simpler approach motivated by and implicit in \cite{mu-pon,vanthe-more,Zucker-1}.
We have shown in~\cite{masulovic-ramsey} that the Ramsey property is a genuine categorical property by proving that it
is preserved by categorical equivalence.

Another observation that crystallized over the years is the fact that we can and have to distinguish
between the Ramsey property for structures (where we color \emph{copies} of one structure within another structure)
and the Ramsey property for embeddings (where we color \emph{embeddings} of one structure into another structure).
In the categorical reinterpretation of these notions
we shall, therefore, consider the Ramsey property for objects and the Ramsey property for morphisms.
Consequently, we shall have to introduce small and big Ramsey degrees for both objects and morphisms.
Although Ramsey degrees for objects are true generalizations of Ramsey degrees for structures, it turns out that
Ramsey degrees for morphisms are easier to calculate with. Fortunately, the relationship between the two is straightforward,
as demonstrated in \cite{Zucker-1,Zucker-2}, and it carries over to the abstract case of Ramsey degrees in
categories (see Propositions~\ref{rdbas.prop.sml} and~\ref{rdbas.prop.big}).
In this paper we put together and generalize several ideas from \cite{dasilvabarbosa,vanthe-more,Zucker-1,Zucker-2} to
obtain several purely categorical results. We then use this more abstract setting to offer new insights into the
relationship between the small and big Ramsey degrees.

In Section~\ref{rdbas.sec.prelim} we give a brief overview of standard notions of
category theory and in particular reflect on the observation from \cite{dasilvabarbosa} that
expansions of classes of structures as introduced in \cite{KPT,vanthe-more} are nothing but special
forgetful functors.

In Section~\ref{rdbas.sec.ramsey-degs} we present a
reinterpretation of the various notions of structural Ramsey theory in the language of category theory.

It was proved in \cite{Zucker-2} in the context of
relational structures that small Ramsey degrees are not larger than the corresponding big ones.
As the first nontrivial benefit of the generalization we advocate in this paper
we prove in Section~\ref{rdbas.sec.min} that more is true. It turns out that
small Ramsey degrees are the minima of the corresponding big ones in the following sense:
for every category $\DD$ satisfying certain mild conditions and every object $A$ in that category we have that
$$
  t_\DD(A) = \min_{\SS, S} T_\SS(A, S),
$$
where the minimum is taken over all the categories $\SS$ that contain $\DD$ as a full subcategory,
and all the objects $S$ of $\SS$ which are universal for $\DD$. (The nonstandard notions will be specified below, of course;
in particular, $t_\DD(A)$ is defined at the beginning of Section~\ref{rdbas.sec.ramsey-degs}, while
$T_\SS(A, S)$ is defined immediately after Proposition~\ref{rdbas.prop.sml}.)
The intuition behind the construction the proof relies on is that computing the small Ramsey degree of an
object $A$ within a category $\DD$ is analogous to computing the big Ramsey degree of the same object $A$
in the category $\DD$ considered as an object of a larger category (which contains both $A$ and $\DD$ as its objects).

In Section~\ref{rdbas.sec.mon} we generalize several facts about the monotonicity of Ramsey degrees which
were first observed in \cite{Zucker-1,Zucker-2} in the context of relational structures.
We show that in some cases the big Ramsey degrees are
monotonous in the first argument. The fact that small Ramsey degrees are the minima of the corresponding
big Ramsey degrees immediately yields the monotonicity of the small Ramsey degrees (which was proved directly in \cite{Zucker-2} for relational structures).
This result is intriguing because we end up with a proof of a property of small Ramsey degrees that
follows from the analogous property of the big Ramsey degrees.

In Section~\ref{rdbas.sec.additivity-big} we generalize a result
from~\cite{dasilvabarbosa} about the additivity of big Ramsey degrees.
Given an expansion $U : \CC^* \to \CC$ satisfying certain mild conditions we prove that
$$
  T_\CC(A, S) \le \sum T_{\CC^*}(A^*, S^*),
$$
where the sum is taken over all the expansions $A^*$ of $A$.
We then identify an abstract property of objects we refer to as \emph{self-similarity}
and prove that the equality holds in the above identity involving the big Ramsey degrees whenever $S^*$ is self-similar.

In Section~\ref{rdbas.sec.appls} we apply the abstract machinery developed in the paper to show that
if a countably infinite relational structure has finite big Ramsey degrees, then so do its quantifier-free reducts.
In particular, reducts of a many combinatorially interesting structures such as
$(\QQ, \Boxed<)$, the random graph, the random poset and the random tournament
all have finite big Ramsey degrees.
Moreover, we prove that if an ultrahomogeneous countably infinite structure has finite big Ramsey degrees, then so
does the structure obtained from it by adding finitely many constants. It follows that all the 116 reducts of
$(\QQ, \Boxed<, 0)$ also have finite big Ramsey degrees.

\section{Preliminaries}
\label{rdbas.sec.prelim}

In this section we provide a brief overview of elementary category-theoretic notions.
For a detailed account of category theory we refer the reader to~\cite{AHS}.

In order to specify a \emph{category} $\CC$ one has to specify
a class of objects $\Ob(\CC)$, a class of morphisms $\hom_\CC(A, B)$ for all $A, B \in \Ob(\CC)$,
the identity morphism $\id_A$ for all $A \in \Ob(\CC)$, and
the composition of morphisms~$\cdot$~so that
$\id_B \cdot f = f = f \cdot \id_A$ for all $f \in \hom_\CC(A, B)$, and
$(f \cdot g) \cdot h = f \cdot (g \cdot h)$ whenever the compositions are defined.
We write $A \toCC{\CC} B$ as a shorthand for $\hom_\CC(A, B) \ne \0$. If $f \in \hom_\CC(A, B)$ then we write
$\dom(f) = A$ and $\cod(f) = B$.

A category $\CC$ is \emph{locally small} if $\hom_\CC(A, B)$ is a set for all $A, B \in \Ob(\CC)$.
Sets of the form $\hom_\CC(A, B)$ are then referred to as \emph{hom-sets}.
A locally small category $\CC$ is \emph{small} if $\Ob(\CC)$ is a set. Hence, in a small category
we have a set of objects and all the hom-sets are indeed sets.

Most results in this paper apply to locally small categories. However, in order to ensure that
the outcome of the power construction (see Section~\ref{rdbas.sec.min}) has hom-\emph{sets},
in the second part of the paper we actually have work with small categories.
Downscaling to small categories does not affect the applicability of our results
since the objects of our study are combinatorial in nature and invariant under
isomorphisms. Our primary interest is in categories of finite or countably infinite
first-order structures, and to understand the behavior of small and big Ramsey degrees in this
context it suffices to consider a single representative of each isomorphism class.

A morphism $f \in \hom_\CC(B, C)$ is \emph{mono} or \emph{left cancellable} if
$f \cdot g = f \cdot h$ implies $g = h$ for all $g, h \in \hom_\CC(A, B)$ where $A \in \Ob(\CC)$ is arbitrary.
A morphism $f \in \hom_\CC(B, C)$ is \emph{invertible} if there is a morphism $g \in \hom_\CC(C, B)$
such that $g \cdot f = \id_B$ and $f \cdot g = \id_C$.
By $\iso_\CC(A, B)$ we denote the set of all invertible
morphisms $A \to B$, and we write $A \cong B$ if $\iso_\CC(A, B) \ne \0$. Let $\Aut(A) = \iso(A, A)$.
An object $A \in \Ob(\CC)$ is \emph{rigid} if $\Aut(A) = \{\id_A\}$.
A category $\CC$ is \emph{directed} if for all $A, B \in \Ob(\CC)$ there is a $C \in \Ob(\CC)$ such that
$A \toCC{\CC} C$ and $B \toCC{\CC} C$.
A category $\CC$ \emph{has amalgamation} if for all $A, B, C \in \Ob(\CC)$ and all
$f_1 \in \hom_\CC(A, B)$ and $g_1 \in \hom_\CC(A, C)$ there is a $D \in \Ob(\CC)$ and
morphisms $f_2 \in \hom_\CC(B, D)$ and $g_2 \in \hom_\CC(C, D)$ such that the following diagram commutes:
\begin{center}
  \begin{tikzcd}
    C \arrow[r, "g_2"]   & D\\
    A \arrow[r, "f_1"] \arrow[u, "g_1"] & B \arrow[u, "f_2"']
  \end{tikzcd}
\end{center}

A category $\DD$ is a \emph{subcategory} of a category $\CC$ if $\Ob(\DD) \subseteq \Ob(\CC)$ and
$\hom_\DD(A, B) \subseteq \hom_\CC(A, B)$ for all $A, B \in \Ob(\DD)$.
A category $\DD$ is a \emph{full subcategory} of a category $\CC$ if $\Ob(\DD) \subseteq \Ob(\CC)$ and
$\hom_\DD(A, B) = \hom_\CC(A, B)$ for all $A, B \in \Ob(\DD)$.
We say that a full subcategory $\DD$ of $\CC$ is \emph{cofinal in $\CC$} if for every
$C \in \Ob(\CC)$ there is a $D \in \Ob(\DD)$ with $C \toCC{\CC} D$.

Let $\DD$ be a full subcategory of $\CC$.
An $S \in \Ob(\CC)$ is \emph{universal for $\DD$} if for every $D \in \Ob(\DD)$
the set $\hom_\CC(D, S)$ is nonempty and consists of monos only.
Note that if there exists an $S \in \Ob(\CC)$ universal for $\DD$ then all the morphisms
in $\DD$ are mono. (To see this, take $A, B, C \in \Ob(\DD)$, any $f \in \hom_\DD(A, B)$ and
$g, h \in \hom_\DD(C, A)$ such that $f \cdot g = f \cdot h$. Let $S \in \Ob(\CC)$ be universal for $\DD$.
Then there is some $b \in \hom_\CC(B, S)$ which is mono by the fact that $S$ is universal for $\DD$.
Let $a = b \cdot f \in \hom_\CC(A, S)$. Note that $a$ is also mono. Now, $f \cdot g = f \cdot h$ implies that
$b \cdot f \cdot g = b \cdot f \cdot h$, that is, $a \cdot g = a \cdot h$. But $a$ is mono, so $g = h$.)

For categories $\CC$ and $\DD$, the objects of the \emph{product category} $\CC \times \DD$ are all the pairs
$(C, D)$ where $C$ is an object of $\CC$ and $D$ is an object of $\DD$. The morphisms in $\CC \times \DD$
are all the pairs $(f, g)$ where $f$ is a morphism in $\CC$ and $g$ is a morphism in $\DD$ and
$\id_{(C, D)} = (\id_C, \id_D)$, $\dom(f, g) = (\dom(f), \dom(g))$,
$\cod(f, g) = (\cod(f), \cod(g))$ and $(f_1, g_1) \cdot (f_2, g_2) = (f_1 \cdot f_2, g_1 \cdot g_2)$
whenever the compositions are defined.

A \emph{functor} $F : \CC \to \DD$ from a category $\CC$ to a category $\DD$ maps $\Ob(\CC)$ to
$\Ob(\DD)$ and maps morphisms of $\CC$ to morphisms of $\DD$ so that
$F(f) \in \hom_\DD(F(A), F(B))$ whenever $f \in \hom_\CC(A, B)$, $F(f \cdot g) = F(f) \cdot F(g)$ whenever
$f \cdot g$ is defined, and $F(\id_A) = \id_{F(A)}$.

A functor $U : \CC \to \DD$ is \emph{forgetful} if it is injective on morphisms in the following sense:
for all $A, B \in \Ob(\CC)$ and all $f, g \in \hom_\CC(A, B)$, if $f \ne g$ then $U(f) \ne U(g)$.
In this setting we may actually assume that $\hom_{\CC}(A, B) \subseteq \hom_\DD(U(A), U(B))$ for all $A, B \in \Ob(\CC)$.
The intuition behind this point of view is that $\CC$ is a category of structures, $\DD$ is the category of sets
and $U$ takes a structure $\calA$ to its underlying set $A$ (thus ``forgetting'' the structure). Then for every
morphism $f : \calA \to \calB$ in $\CC$ the same map is a morphism $f : A \to B$ in $\DD$.
Therefore, we shall always take that $U(f) = f$ for all the morphisms in $\CC$. In particular,
$U(\id_{A}) = \id_{U(A)}$ and we, therefore, identify $\id_{A}$ with $\id_{U(A)}$.
Also, if $U : \CC \to \DD$ is a forgetful functor and all the morphisms in $\DD$ are mono, then
all the morphisms in $\CC$ are mono.

Following the model-theoretic notation, a forgetful functor $U : \CC^* \to \CC$
which is surjective on objects will be referred to as \emph{expansion} (cf.\ \cite{dasilvabarbosa,masul-dual-kpt}).
We shall also say that $\CC^*$ is an \emph{expansion} of $\CC$ if $U$ is obvious from the context.
Clearly, if $U : \CC^* \to \CC$ is an expansion, all the morphisms in $\CC$ are mono and $S^* \in \Ob(\CC^*)$ is universal for $\CC^*$
then $U(S^*) \in \Ob(\CC)$ is universal for $\CC$. For $A \in \Ob(\CC)$ let
$$
  U^{-1}(A) = \{A^* \in \Ob(\CC^*) : U(A^*) = A\}.
$$

An expansion $U : \CC^* \to \CC$ is \emph{reasonable} (cf.\ \cite{KPT,masul-dual-kpt}) if
for all $A, B \in \Ob(\CC)$, all $f \in \hom_\CC(A, B)$ and all $A^* \in \Ob(\CC^*)$ with $U(A^*) = A$
there is a $B^* \in \Ob(\CC^*)$ such that $U(B^*) = B$ and $f \in \hom_{\CC^*}(A^*, B^*)$:

\begin{center}
  \begin{tikzcd}
    A^* \arrow[r, "f"] \arrow[d, mapsto, dashed, "U"'] & B^* \arrow[d, mapsto, dashed, "U"] \\
    A \arrow[r, "f"] & B
  \end{tikzcd}
\end{center}

An expansion $U : \CC^* \to \CC$ \emph{has restrictions} \cite{masul-dual-kpt} if
for all $A, B \in \Ob(\CC)$, all $f \in \hom_\CC(A, B)$ and all $B^* \in \Ob(\CC^*)$
with $U(B^*) = B$ there is an $A^* \in \Ob(\CC^*)$ such that $U(A^*) = A$ and
$f \in \hom_{\CC^*}(A^*, B^*)$.
\begin{center}
\begin{tikzcd}
    A^* \arrow[r, "f"] \arrow[d, mapsto, dashed, "U"'] & B^* \arrow[d, mapsto, dashed, "U"] \\
    A \arrow[r, "f"] & B
\end{tikzcd}
\end{center}
If such an $A^*$ is always unique we say that $U : \CC^* \to \CC$ \emph{has unique restrictions}.
We then write $A^* = \restr{B^*}{f}$.

The proofs of the following three lemmas are straightforward:

\begin{LEM}\label{sbrd.lem.1}
  Let $\CC$ and $\CC^*$ be locally small categories and
  let $U : \CC^* \to \CC$ be an expansion with unique restrictions.
  
  $(a)$ Let $A \in \Ob(\CC)$ and $A^*, A^*_1 \in U^{-1}(A)$. Let $f : A^*_1 \to A^*$ be a morphism.
  If $U(f) = \id_A$ then $A^* = A^*_1$ and $f = \id_{A^*}$.
  
  $(b)$ Let $A, B \in \Ob(\CC)$ and let
  $f : A \to B$ be an isomorphism in $\CC$. Take any $B^* \in U^{-1}(B)$ and let $A^* = \restr{B^*}{f}$. Then
  $f : A^* \to B^*$ is an isomorphism in $\CC^*$.
\end{LEM}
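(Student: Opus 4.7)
The plan is to derive both parts from two distinct uniqueness principles: uniqueness of restrictions (the hypothesis) and injectivity of $U$ on hom-sets (which is built into the definition of a forgetful functor). Part (a) packages both, and part (b) will reduce to part (a) by a short diagram chase.

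For (a), I would first note that the pair $(A^*, \id_{A^*})$ trivially realises $A^*$ as a restriction of $A^*$ along $\id_A$, since $U(\id_{A^*}) = \id_A$. The datum of $A^*_1 \in U^{-1}(A)$ together with a morphism $f : A^*_1 \to A^*$ satisfying $U(f) = \id_A$ is another such realisation of the restriction of $A^*$ along $\id_A$. The uniqueness clause in the definition of unique restrictions therefore forces $A^*_1 = \restr{A^*}{\id_A} = A^*$. Then $f$ and $\id_{A^*}$ are two $\CC^*$-morphisms $A^* \to A^*$ whose images under $U$ both equal $\id_A$, and since $U$ is injective on hom-sets we conclude $f = \id_{A^*}$.

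For (b), I would build a two-sided inverse of $f : A^* \to B^*$ in $\CC^*$. Applying unique restrictions to $f^{-1} : B \to A$ in $\CC$, together with the chosen object $A^* \in U^{-1}(A)$, yields some $B^*_1 \in U^{-1}(B)$ such that $f^{-1} : B^*_1 \to A^*$ is a $\CC^*$-morphism. The composite $f \cdot f^{-1} : B^*_1 \to B^*$ then lives in $\CC^*$ and satisfies $U(f \cdot f^{-1}) = \id_B$; part (a) immediately gives $B^*_1 = B^*$ and $f \cdot f^{-1} = \id_{B^*}$. Symmetrically, $f^{-1} \cdot f : A^* \to A^*$ is a $\CC^*$-morphism with $U$-image $\id_A$, so another application of part (a) yields $f^{-1} \cdot f = \id_{A^*}$. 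Hence $f : A^* \to B^*$ is invertible in $\CC^*$.

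The only point to be careful about is not conflating the two uniqueness statements at play: uniqueness of the restriction \emph{object} (supplied by the hypothesis) versus uniqueness of the \emph{lift} of a morphism between fixed objects (supplied by $U$ being injective on hom-sets). Once these are kept separate and invoked in the right order, the whole lemma is a straightforward formal argument, and I do not expect any real obstacle.
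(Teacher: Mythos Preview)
Your proof is correct and is exactly the straightforward argument the paper has in mind (the paper omits the proof entirely, stating only that it is straightforward). Both parts are handled cleanly, and your closing remark about separating the two uniqueness principles---uniqueness of the restriction object versus injectivity of $U$ on hom-sets---is precisely the point.
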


\begin{LEM}\label{sbrd.lem.disj-union}
  Let $\CC$ and $\CC^*$ be locally small categories.
  
  $(a)$ The expansion $U : \CC^* \to \CC$ is an expansion with restrictions if and only if
  for all $A \in \Ob(\CC)$ and all $B^* \in \Ob(\CC^*)$ we have that
  $\hom_\CC(A, U(B^*)) = \bigcup_{A^* \in U^{-1}(A)} \hom_{\CC^*}(A^*, B^*)$.

  $(b)$ The expansion $U : \CC^* \to \CC$ is an expansion with unique restrictions if and only if
  for all $A \in \Ob(\CC)$ and all $B^* \in \Ob(\CC^*)$ we have that
  $\hom_\CC(A, U(B^*)) = \bigcup_{A^* \in U^{-1}(A)} \hom_{\CC^*}(A^*, B^*)$
  and this is a disjoint union.
\end{LEM}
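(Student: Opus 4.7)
The plan is to note first that the containment $\supseteq$ in the stated equation holds without any hypothesis on $U$: since $U$ is forgetful we have $\hom_{\CC^*}(A^*, B^*) \subseteq \hom_\CC(U(A^*), U(B^*)) = \hom_\CC(A, U(B^*))$ whenever $A^* \in U^{-1}(A)$, so each morphism in the union is indeed a morphism $A \to U(B^*)$ in $\CC$. So both directions of (a) boil down to understanding when the opposite containment $\subseteq$ holds.

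For part (a), the forward direction is a direct translation: given $f \in \hom_\CC(A, U(B^*))$, set $B = U(B^*)$ and apply the restrictions property to the morphism $f : A \to B$ and the object $B^* \in U^{-1}(B)$ to produce $A^* \in U^{-1}(A)$ with $f \in \hom_{\CC^*}(A^*, B^*)$, placing $f$ in the union. The reverse direction is equally mechanical: given $A, B \in \Ob(\CC)$, $f \in \hom_\CC(A, B)$, and $B^* \in U^{-1}(B)$, we have $f \in \hom_\CC(A, U(B^*))$, hence by the assumed equality $f \in \hom_{\CC^*}(A^*, B^*)$ for some $A^* \in U^{-1}(A)$, which is exactly the restrictions property.

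For part (b), the equality of the two sets is handled by (a), so only the disjointness of the union needs attention. If $U$ has unique restrictions and $f$ lies in both $\hom_{\CC^*}(A^*, B^*)$ and $\hom_{\CC^*}(A^*_1, B^*)$ with $A^*, A^*_1 \in U^{-1}(A)$, then both $A^*$ and $A^*_1$ qualify as a restriction of $B^*$ along $f$, so uniqueness forces $A^* = A^*_1$. Conversely, if the union is disjoint, then whenever two candidates $A^*, A^*_1 \in U^{-1}(A)$ satisfy $f \in \hom_{\CC^*}(A^*, B^*)$ and $f \in \hom_{\CC^*}(A^*_1, B^*)$, disjointness forces them to lie in the same piece of the union, giving $A^* = A^*_1$.

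There is essentially no obstacle here: the lemma is a pure reformulation, and the main thing to be careful about is simply the bookkeeping of identifying $\hom_\CC(A, B)$ with $\hom_\CC(A, U(B^*))$ once a lift $B^*$ of $B$ has been chosen, together with the observation that by the conventions on forgetful functors a morphism $f$ in $\hom_{\CC^*}(A^*, B^*)$ is literally the same morphism as $f \in \hom_\CC(A, B)$.
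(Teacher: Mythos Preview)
Your proposal is correct and is exactly the straightforward verification the paper has in mind; indeed, the paper omits the proof entirely, stating only that it is ``straightforward.'' There is nothing to add.
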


\begin{LEM}\label{sbrd.lem.iso-disj-union}
  Let $\CC$ and $\CC^*$ be locally small categories and
  let $U : \CC^* \to \CC$ be an expansion with unique restrictions.
  For $A \in \Ob(\CC)$ let $A^* \in U^{-1}(A)$ be arbitrary, and let
  $I$ be the class of all those $A^{**} \in \Ob(\CC^*)$ such that $A^{**}$ is isomorphic to $A^*$ and
  $U(A^{**}) = A$.

  $(a)$
  $\Aut_\CC(A) = \bigcup_{A^{**} \in I} \iso_{\CC^*}(A^{**}, A^*)$
  and this is a disjoint union. Moreover, $I$ is a set.
  
  $(b)$ Suppose that $I$ is finite and that $\Aut(A)$ is finite.
  Then $|\Aut_\CC(A)| = |I| \cdot |\Aut_{\CC^*}(A^*)|$.
\end{LEM}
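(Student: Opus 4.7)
The plan is to deduce (a) by restricting the disjoint-union description of hom-sets from Lemma~\ref{sbrd.lem.disj-union}(b) to the subset of isomorphisms, and then to deduce (b) from (a) via the standard observation that composition with a fixed isomorphism bijects an isomorphism set onto an automorphism group.

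For part (a), I would apply Lemma~\ref{sbrd.lem.disj-union}(b) with $B^* := A^*$, which gives that every $f \in \hom_\CC(A, A)$ belongs to $\hom_{\CC^*}(A^*_0, A^*)$ for a unique $A^*_0 \in U^{-1}(A)$, namely $A^*_0 = \restr{A^*}{f}$. Specializing to $f \in \Aut_\CC(A)$, Lemma~\ref{sbrd.lem.1}(b) applied to the isomorphism $f : A \to A$ upgrades this to an isomorphism $f : A^*_0 \to A^*$ in $\CC^*$; hence $A^*_0 \cong A^*$ with $U(A^*_0) = A$, so $A^*_0$ is one of the $A^*_i$. This gives $\Aut_\CC(A) \subseteq \bigcup_i \iso_{\CC^*}(A^*_i, A^*)$, and the disjointness of the union is inherited from the (larger) disjoint union of Lemma~\ref{sbrd.lem.disj-union}(b). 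The reverse inclusion is immediate: any $f \in \iso_{\CC^*}(A^*_i, A^*)$ yields $U(f) = f \in \Aut_\CC(A)$ because a functor carries invertible morphisms to invertible morphisms and $U(A^*_i) = U(A^*) = A$.

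For part (b), for each $i \in I$ I would fix an isomorphism $g_i \in \iso_{\CC^*}(A^*_i, A^*)$, which exists by the definition of $I$. Then $h \mapsto h \cdot g_i$ is a bijection $\Aut_{\CC^*}(A^*) \to \iso_{\CC^*}(A^*_i, A^*)$ with inverse $f \mapsto f \cdot g_i^{-1}$, so $|\iso_{\CC^*}(A^*_i, A^*)| = |\Aut_{\CC^*}(A^*)|$ for every $i$. Summing the cardinalities in the disjoint union from (a) then yields
$$
  |\Aut_\CC(A)| = \sum_{i \in I} |\iso_{\CC^*}(A^*_i, A^*)| = |I| \cdot |\Aut_{\CC^*}(A^*)|.
$$

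I do not anticipate any real obstacle: every step is either a direct appeal to Lemmas~\ref{sbrd.lem.1} and~\ref{sbrd.lem.disj-union} or a standard torsor argument. The only point worth flagging is that, when we pull a $\CC$-automorphism back along $U$, we must know the resulting $\CC^*$-morphism is actually an isomorphism in $\CC^*$ (so that it witnesses $A^*_0 \cong A^*$ in $\CC^*$ rather than merely living in the right hom-set); this is exactly the content of Lemma~\ref{sbrd.lem.1}(b), and without unique restrictions it would fail.
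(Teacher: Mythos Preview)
Your proposal is correct and is precisely the straightforward argument the paper has in mind: the paper omits the proof entirely, declaring it (together with Lemmas~\ref{sbrd.lem.1} and~\ref{sbrd.lem.disj-union}) to be straightforward, and your derivation of (a) from Lemma~\ref{sbrd.lem.disj-union}(b) restricted to isomorphisms via Lemma~\ref{sbrd.lem.1}(b), followed by the torsor count for (b), is exactly the intended route.
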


An expansion $U : \CC^* \to \CC$ \emph{has the expansion property} (cf.\ \cite{vanthe-more}) if for every $A \in \Ob(\CC)$
there exists a $B \in \Ob(\CC)$ such that $A^* \toCC{\CC^*} B^*$ whenever
$U(A^*) = A$ and all $U(B^*) = B$.

\section{Ramsey degrees in a category}
\label{rdbas.sec.ramsey-degs}

For a $k \in \NN$, a $k$-coloring of a set $S$ is any mapping $\chi : S \to k$, where,
as usual, we identify $k$ with $\{0, 1,\ldots, k-1\}$.

Let $\CC$ be a locally small category and $A, B \in \Ob(\CC)$.
Define $\sim_A$ on $\hom(A, B)$ as follows: for $f, f' \in \hom(A, B)$ we let $f \sim_A f'$ if $f' = f \cdot \alpha$
for some $\alpha \in \Aut(A)$. Then
$$
  \binom B A = \hom(A, B) / \Boxed{\sim_A}
$$
corresponds to all subobjects of $B$ isomorphic to $A$.
For an integer $k \ge 2$ and $A, B, C \in \Ob(\CC)$ we write
$$
  C \longrightarrow (B)^{A}_{k, t}
$$
to denote that for every $k$-coloring
$
  \chi : \binom CA \to k
$
there is a morphism $w : B \to C$ such that $|\chi(w \cdot \binom BA)| \le t$.
(Note that $w \cdot (f / \Boxed{\sim_A}) = (w \cdot f) / \Boxed{\sim_A}$ for $f / \Boxed{\sim_A} \in \binom B A$.)
Instead of $C \longrightarrow (B)^{A}_{k, 1}$ we simply write
$C \longrightarrow (B)^{A}_{k}$.
Analogously, we write
$$
  C \overset{\mor}\longrightarrow (B)^{A}_{k,t}
$$
to denote that for every $k$-coloring
$
  \chi : \hom(A, C) \to k
$
there is a morphism $w : B \to C$ such that $|\chi(w \cdot \hom(A, B))| \le t$.
Instead of $C \overset{\mor}\longrightarrow (B)^{A}_{k, 1}$ we simply write
$C \overset{\mor}\longrightarrow (B)^{A}_{k}$.

A locally small category $\CC$ has the \emph{Ramsey property for objects} if
for every integer $k \ge 2$ and all $A, B \in \Ob(\CC)$ there is a
$C \in \Ob(\CC)$ such that $C \longrightarrow (B)^{A}_k$.
Analogously, $\CC$ has the \emph{Ramsey property for morphisms} if
for every integer $k \ge 2$ and all $A, B \in \Ob(\CC)$ there is a
$C \in \Ob(\CC)$ such that $C \overset{\mor}\longrightarrow (B)^{A}_k$.

Let $\NN = \{1, 2, 3, \ldots \}$ be the set of positive integers and let $\NN_\infty = \NN \union \{\infty\}$.
The usual linear order on the positive integers extends to $\NN_\infty$ straightforwardly:
$$
  1 < 2 < \ldots < \infty.
$$
Ramsey degrees, both big and small, will take their values in $\NN_\infty$, so when we write
$t_1 \ge t_2$ for some Ramsey degrees $t_1$ and $t_2$ then
  $t_1, t_2 \in \NN$ and $t_1 \ge t_2$; or
  $t_1 = \infty$ and $t_2 \in \NN$; or
  $t_1 = t_2 = \infty$.
For notational convenience, if $A$ is an infinite set we shall simply write
$|A| = \infty$ regardless of the actual cardinal~$|A|$. Hence, if $t$ is a Ramsey degree
and $A$ is a set, by $t \ge |A|$ we mean the following:
  $t \in \NN$, $|A| \in \NN$ and $t \ge |A|$; or
  $t = \infty$ and $|A| \in \NN$; or
  $A$ is an infinite set and $t = \infty$.
On the other hand, if $A$ and $B$ are sets then $|A| \ge |B|$ has the usual meaning.

Let $\CC$ be a locally small category.
For $A \in \Ob(\CC)$ let $t_\CC(A)$ denote the least positive integer $n$ such that
for all $k \ge 2$ and all $B \in \Ob(\CC)$ there exists a $C \in \Ob(\CC)$ such that
$C \longrightarrow (B)^{A}_{k, n}$, if such an integer exists.
Otherwise put $t_\CC(A) = \infty$.
Analogously let $t^{\mor}_\CC(A)$ denote the least positive integer $n$ such that
for all $k \ge 2$ and all $B \in \Ob(\CC)$ there exists a $C \in \Ob(\CC)$ such that
$C \overset{\mor}\longrightarrow (B)^{A}_{k, n}$, if such an integer exists.
Otherwise put $t^{\mor}_\CC(A) = \infty$.

The following result was proved for relational structures in \cite{Zucker-1} and generalized to this form in \cite{masul-dual-kpt}:

\begin{PROP}\label{rdbas.prop.sml} (\cite{masul-dual-kpt})
  Let $\CC$ be a locally small category such that all the morphisms in $\CC$ are mono
  and let $A \in \Ob(\CC)$. Then $t^{\mor}_\CC(A)$ is finite if and only if both $t_\CC(A)$ and $\Aut(A)$ are finite,
  and in that case
  $$
    t^{\mor}_\CC(A) = |\Aut(A)| \cdot t_\CC(A).
  $$
\end{PROP}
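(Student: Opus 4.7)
The plan is to prove the equality $t^{\mor}_\CC(A) = |\Aut(A)| \cdot t_\CC(A)$ directly in $\NN_\infty$; the biconditional then follows because a product in $\NN_\infty$ is finite iff both factors are. The hypothesis that every morphism in $\CC$ is mono enters through a single fact: the right action of $\Aut(A)$ on $\hom_\CC(A,C)$ by $(f,\alpha) \mapsto f \cdot \alpha$ is free, so every class $[f] = f \cdot \Aut(A) \in \binom{C}{A}$ has cardinality exactly $|\Aut(A)|$, and for any $w : B \to C$ the map $\hom(A,B) \to \hom(A,C)$, $f \mapsto w \cdot f$, is injective with image $\bigsqcup_{[f] \in \binom{B}{A}} w \cdot [f]$.

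For the upper bound $t^{\mor}_\CC(A) \le |\Aut(A)| \cdot t_\CC(A)$ I assume both factors are finite, set $n = |\Aut(A)|$ and $t = t_\CC(A)$, enumerate $\Aut(A) = \{\alpha_1,\ldots,\alpha_n\}$, and for given $k \ge 2$ and $B$ pick $C$ with $C \longrightarrow (B)^A_{k^n, t}$. To a morphism coloring $\tilde\chi : \hom(A,C) \to k$ I associate the object coloring $\chi([f]) = \bigl(\tilde\chi(\rho([f]) \cdot \alpha_i)\bigr)_{i=1}^{n} \in k^n$, where $\rho([f]) \in [f]$ is any chosen representative. If $w : B \to C$ satisfies $|\chi(w \cdot \binom{B}{A})| \le t$, then for each $[f] \in \binom{B}{A}$ the set of $\tilde\chi$-colors on $w \cdot [f]$ is exactly the set of entries of the tuple $\chi(w \cdot [f])$, a set of at most $n$ elements; combining over at most $t$ distinct tuples yields $|\tilde\chi(w \cdot \hom(A,B))| \le n \cdot t$.

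For the lower bound $t^{\mor}_\CC(A) \ge |\Aut(A)| \cdot t_\CC(A)$ I split into three cases. If $t_\CC(A) = \infty$, the tautological lift $\tilde\chi(f) = \chi([f])$ transports any object witness to a morphism witness with the same color count, so $t^{\mor}_\CC(A) = \infty$. If $|\Aut(A)| = \infty$, take $B = A$ and any $C$ with $\hom(A,C) \ne \emptyset$; for any $N$, color each class $[f]$ by a surjection onto $\{0,\ldots,N-1\}$ (possible since $|[f]| = \infty$), and observe that for every $w : A \to C$ the class $[w] \subseteq w \cdot \hom(A,A)$ already sees all $N$ colors, forcing $t^{\mor}_\CC(A) = \infty$. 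If both are finite, take $(k_0, B_0)$ witnessing the minimality of $t_\CC(A)$ and lift any object witness $\chi : \binom{C}{A} \to k_0$ to $\tilde\chi : \hom(A,C) \to k_0 \times \Aut(A)$ by $\tilde\chi(f) = (\chi([f]), \alpha_f)$, where $\alpha_f$ is the unique element with $f = \rho([f]) \cdot \alpha_f$. The identity in the next paragraph then gives $\tilde\chi(w \cdot \hom(A,B_0)) = \chi(w \cdot \binom{B_0}{A}) \times \Aut(A)$, of cardinality $|\Aut(A)| \cdot |\chi(w \cdot \binom{B_0}{A})| \ge |\Aut(A)| \cdot t_\CC(A)$.

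The main obstacle is verifying the ``sweep'' identity $\tilde\chi(w \cdot \hom(A,B_0)) = \chi(w \cdot \binom{B_0}{A}) \times \Aut(A)$, which reduces to showing that for each fixed $[f_0] \in \binom{B_0}{A}$, as $f$ ranges over $[f_0]$ the coordinate $\alpha_{w \cdot f}$ covers all of $\Aut(A)$. Write $\rho(w \cdot [f_0]) = w \cdot \rho([f_0]) \cdot \gamma$ for the (class-dependent but $f$-independent) $\gamma \in \Aut(A)$ supplied by the representative function; then $f = \rho([f_0]) \cdot \beta$ gives $w \cdot f = \rho(w \cdot [f_0]) \cdot \gamma^{-1}\beta$, so $\alpha_{w \cdot f} = \gamma^{-1}\beta$, which ranges over all of $\Aut(A)$ as $\beta$ does. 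Monicity is what keeps these $|\Aut(A)|$ elements of each class genuinely distinct throughout the argument.
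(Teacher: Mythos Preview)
The paper does not prove this proposition itself (it is cited from \cite{masul-dual-kpt}), but it does give a full proof of the companion statement for big Ramsey degrees (Proposition~\ref{rdbas.prop.big}), and your argument is a correct adaptation of that same strategy to the small-degree setting. The structure is identical: handle $|\Aut(A)|=\infty$ by a coloring that distinguishes automorphism-translates within each $\sim_A$-class; handle $t_\CC(A)=\infty$ by pushing an object coloring forward along the quotient map; and in the finite case prove both inequalities via the product coloring $f\mapsto(\chi([f]),\alpha_f)$. One cosmetic difference: for the upper bound the paper's proof of~\ref{rdbas.prop.big} uses a \emph{set}-valued object coloring $\chi'([f])=\chi([f])\in\calP(k)$ with $2^k$ colors, whereas you use a \emph{tuple}-valued coloring in $k^{|\Aut(A)|}$; your version is a little sharper but the idea is the same.

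Two minor points of presentation. In the case $|\Aut(A)|=\infty$ you restrict to $C$ with $\hom(A,C)\ne\0$; you should add a word that when $\hom(A,C)=\0$ then (since $B=A$) also $\hom(B,C)=\0$, so the arrow $C\overset\mor\longrightarrow(B)^A_{k,N-1}$ fails vacuously for want of any $w$. Second, the ``sweep'' identity is exactly right, and your computation $\alpha_{w\cdot f}=\gamma^{-1}\beta$ is the clean way to see that the second coordinate hits all of $\Aut(A)$; this is the point where monicity of $w$ (ensuring $w\cdot[f_0]$ really is a full $\sim_A$-class of size $|\Aut(A)|$) is used, as you note.
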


Note, also, that the above relationship between $t$ and $t^{\mor}$ provides the link between the
Ramsey property for objects and the Ramsey property for morphisms in a category.

\begin{COR}
  Let $\CC$ be a locally small category such that all the morphisms in $\CC$ are mono.
  Assume also that $\Aut(A)$ is finite for all $A \in \Ob(\CC)$. Then $\CC$ has the Ramsey
  property for objects if and only if $t^{\mor}_\CC(A) = |\Aut(A)|$ for all $A \in \Ob(\CC)$.
\end{COR}

\noindent
This is an immediate consequence of Proposition~\ref{rdbas.prop.sml} but we believe that it is worth noting.
Since it is always the case that $t^{\mor}_\CC(A) \ge |\Aut(A)|$ (see \cite{masul-dual-kpt} for details),
it follows that the Ramsey property for objects corresponds to the situation where $t^{\mor}_\CC(A)$ attains
its minimal value for each $A \in \Ob(\CC)$.

Let $\CC$ be a locally small category.
For $A, S \in \Ob(\CC)$ let $T_\CC(A, S)$ denote the least positive integer $n$ such that
for all $k \ge 2$ we have that $S \longrightarrow (S)^{A}_{k, n}$, if such an integer exists.
Otherwise put $T_\CC(A, S) = \infty$.
Analogously, let $T^{\mor}_\CC(A, S)$ denote the least positive integer $n$ such that
for all $k \ge 2$ we have that $S \overset{\mor}\longrightarrow (S)^{A}_{k, n}$, if such an integer exists.
Otherwise put $T^{\mor}_\CC(A, S) = \infty$.

In full analogy to Proposition~\ref{rdbas.prop.sml} we now have (see \cite{Zucker-2} for the proof in case of relational
structures):

\begin{PROP}\label{rdbas.prop.big}
  Let $\CC$ be a locally small category and let $A, S \in \Ob(\CC)$ be chosen so that all the morphisms in
  $\hom_\CC(A, S)$ are mono. Then $T^{\mor}_\CC(A, S)$ is finite if and only if both $\Aut(A)$ and $T_\CC(A, S)$ are finite,
  and in that case
  $$
    T^{\mor}_\CC(A, S) = |\Aut(A)| \cdot T_\CC(A, S).
  $$
\end{PROP}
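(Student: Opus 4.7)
The plan is to run the argument of Proposition~\ref{rdbas.prop.sml} with the variable witness $C$ replaced by the fixed $S$ throughout. The structural engine is identical: because every morphism in $\hom_\CC(A, S)$ is mono, the right action of $\Aut(A)$ on each class $[f] \in \binom S A$ is free, so $|[f]| = |\Aut(A)|$, and any choice of representative $\hat h$ puts $[h]$ in bijection with $\Aut(A)$ via $\hat h \cdot \alpha \leftrightarrow \alpha$. The one computation I would record up front is that if $w : S \to S$ and $w \cdot f = \widehat{[w \cdot f]} \cdot \gamma$ for the fixed representative, then $w \cdot f \cdot \alpha = \widehat{[w \cdot f]} \cdot (\gamma \alpha)$, so the reindexing $\alpha \mapsto \gamma \alpha$ is a bijection $\Aut(A) \to \Aut(A)$ that matches up ``positions inside $w \cdot [f]$'' with ``positions inside $[w \cdot f]$ relative to its representative''.

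For the inequality $T^{\mor}_\CC(A, S) \le |\Aut(A)| \cdot T_\CC(A, S)$ (assuming both factors are finite; write $N := |\Aut(A)|$ and $T := T_\CC(A, S)$), given a $k$-coloring $\chi : \hom(A, S) \to k$ I would fix representatives and lift $\chi$ to $\chi' : \binom S A \to k^{\Aut(A)}$ by $\chi'([h])(\alpha) = \chi(\hat h \cdot \alpha)$; this uses at most $k^N$ colors. Applying $T_\CC(A, S) = T$ to $\chi'$ gives $w : S \to S$ with $|\chi'(w \cdot \binom S A)| \le T$. The reindexing bijection makes $\chi(w \cdot [f]) = \im(\chi'([w \cdot f]))$, which has size $\le N$; taking the union over the at most $T$ distinct values of $\chi'$ on $w \cdot \binom S A$ yields $|\chi(w \cdot \hom(A, S))| \le N \cdot T$.

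The reverse inequality and the two finiteness implications all follow from one bad coloring assembled from two ingredients. The tautological coloring $\chi_{\mathrm{can}} : \hom(A, S) \to \Aut(A)$ defined by $\chi_{\mathrm{can}}(\hat h \cdot \alpha) = \alpha$ has the key property that $\chi_{\mathrm{can}}(w \cdot [f]) = \Aut(A)$ for every $w$ and every $[f]$, again by the reindexing bijection. Separately, any bad $k_0$-coloring $\chi_0 : \binom S A \to k_0$ witnessing $T_\CC(A, S) \ge T$ lifts to a coloring of $\hom(A, S)$ that is constant on each class. Pairing the two into $\tilde\chi(f) = (\pi \circ \chi_{\mathrm{can}}(f),\, \chi_0([f]))$ --- where $\pi : \Aut(A) \to m$ is an arbitrary surjection, used to keep the total number of colors finite --- forces every $w \cdot \hom(A, S)$ to exhibit at least $m \cdot T$ colors. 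Letting $m$ and $T$ grow freely when $\Aut(A)$ or $T_\CC(A, S)$ is infinite proves the two finiteness implications, while in the all-finite case the choice $m = |\Aut(A)|$ delivers the sharp lower bound $|\Aut(A)| \cdot T_\CC(A, S)$.

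The only real obstacle is bookkeeping: the representative of $[w \cdot f]$ is in general not $w \cdot \hat f$, so the shift by $\gamma$ that appears when switching between the two descriptions must be handled explicitly as a bijection on $\Aut(A)$ rather than swept aside. Once that is absorbed, the argument parallels the small-degree case and depends on no hypothesis beyond monicity of the morphisms in $\hom_\CC(A, S)$.
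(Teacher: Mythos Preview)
Your proof is correct and follows essentially the same approach as the paper's: the upper bound lifts a hom-coloring to an object-coloring (you use $k^{\Aut(A)}$, the paper uses $\mathcal P(k)$, which is the image of your map and carries exactly the information needed), and the lower bound pairs a bad object-coloring with the canonical $\Aut(A)$-coordinate coloring, which is precisely the paper's $\xi(f) = (\chi(f/{\sim_A}), \alpha(f))$. Your surjection trick $\pi : \Aut(A) \to m$ is a mild repackaging that lets you treat the two infinite cases uniformly, whereas the paper handles them in separate paragraphs; the content is the same.
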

\begin{proof}
  Assume, first, that $|\Aut(A)| = \infty$. Let us show that $T^{\mor}_\CC(A, S) = \infty$ by showing that
  $T^{\mor}_\CC(A, S) \ge n$ for every $n \in \NN$. Fix an $n \in \NN$ and $X \subseteq \Aut(A)$ such that
  $|X| = n$.
  Let $\binom SA = \hom(A, S) / \Boxed{\sim_A}  = \{H_i : i \in I\}$ for some index set $I$.
  For each $i \in I$ choose a representative $h_i \in H_i$. Then $H_i = h_i \cdot \Aut(A)$.
  Fix an arbitrary $\xi \in X$ and define $\chi' : \hom(A, S) \to X$ as follows:
  \begin{itemize}
  \item[]
    if $g = h_i \cdot \alpha$ for some $i \in I$ where $\alpha \in X$ then $\chi'(g) = \alpha$;
  \item[]
    otherwise $\chi'(g) = \xi$.
  \end{itemize}
  Take any $w : S \to S$. Let $f \in \hom(A, S)$ be arbitrary. Then:
  $$
    |\chi'(w \cdot \hom(A, S))| \ge |\chi'(w \cdot f \cdot \Aut(A))|.
  $$
  Clearly, $w \cdot f \cdot \Aut(A) = h_i \cdot \Aut(A)$ for some $i \in I$, so
  $$
    |\chi'(w \cdot \hom(A, S))| \ge |\chi'(h_i \cdot \Aut(A))| = n.
  $$
  This completes the proof in case $\Aut(A)$ is infinite.

  Let us now move on to the case when $\Aut(A)$ is finite.

  Let $T_\CC(A, S) = n$ for some $n \in \NN$.
  Take any $k \ge 2$. Since $T_\CC(A, S) = n$ we have that
  $S \longrightarrow (S)^{A}_{2^k, n}$. Let $\chi : \hom(A, S) \to k$ be an
  arbitrary coloring. Construct $\chi' : \binom SA \to \calP(k)$ as follows:
  $$
    \chi'(f / \Boxed{\sim_A}) = \chi(f / \Boxed{\sim_A})
  $$
  (here, $\chi$ is applied to a set of morphisms to produce a set of colors, which is an element of $\calP(k)$).
  Then $S \longrightarrow (S)^{A}_{2^k, n}$ implies that there exists a $w : S \to S$ such that
  $|\chi'(w \cdot \binom SA)| \le n$. But then it is easy to see that
  $|\chi'(w \cdot \binom SA)| \le n$ implies $|\chi(w \cdot \hom(A, S))| \le n \cdot |\Aut(A)|$, proving
  thus that $T^\mor_\CC(A, S) \le n \cdot |\Aut(A)| = T_\CC(A, S) \cdot |\Aut(A)|$.

  For the other inequality note that $T_\CC(A, S) = n$ also implies that there is a $k \ge 2$ and
  a coloring $\chi : \binom SA \to k$ with the property that for every $w \in \hom(S, S)$ we have that
  $|\chi(w \cdot \binom SA)| \ge n$. Let $\ell = k \cdot |\Aut(A)|$.
  Let $\binom SA = \hom(A, S) / \Boxed{\sim_A}  = \{H_i : i \in I\}$ for some index set $I$.
  For each $i \in I$ choose a representative $h_i \in H_i$. Then $H_i = h_i \cdot \Aut(A)$.  
  Since all the morphisms in $\hom_\CC(A, S)$ are mono, for each $f \in \hom(A, S)$ there is a unique
  $i \in I$ and a unique $\alpha \in \Aut(A)$ such that $f = h_i \cdot \alpha$. Let us denote this
  $\alpha$ by $\alpha(f)$. Consider the following coloring:
  $$
    \xi : \hom(A, S) \to k \times \Aut(A) : f \mapsto (\chi(f/\Boxed{\sim_A}), \alpha(f))
  $$
  and take any $w \in \hom(S, S)$. Since
  $|\chi(w \cdot \binom SA)| \ge n$, it easily follows that
  $|\xi(w \cdot \hom(A, S))| \ge n \cdot |\Aut(A)|$
  proving that $T^\mor_\CC(A, S) \ge n \cdot |\Aut(A)| = T_\CC(A, S) \cdot |\Aut(A)|$.

  Assume, finally, that $T_\CC(A, S) = \infty$ and let us show that $T^{\mor}_\CC(A, S) = \infty$ by showing that
  $T^{\mor}_\CC(A, S) \ge n$ for every $n \in \NN$. Fix an $n \in \NN$. Since $T_\CC(A, S) = \infty$,
  there is a $k \ge 2$ and a coloring
  $\chi : \binom SA \to k$ such that for every $w : S \to S$ we have that
  $|\chi(w \cdot \binom SA)| \ge n$. Then the coloring $\chi' : \hom(A, S) \to k$ defined by
  $$
    \chi'(f) = \chi(f / \Boxed{\sim_A})
  $$
  has the property that $|\chi(w \cdot \hom(A, S))| \ge n$.

  This completes the proof.
\end{proof}

As an immediate corollary we have the following:

\begin{COR}
  Let $\CC$ be a locally small category and let $A, S \in \Ob(\CC)$ be chosen so that all the morphisms in
  $\hom_\CC(A, S)$ are mono. Then

  $(a)$ $T^{\mor}_\CC(A, S) \ge |\Aut(A)|$;
  
  $(b)$ if $T^{\mor}_\CC(A, S) \le n$ then $|\Aut(A)| \le n$;
  
  $(c)$ if $T^{\mor}_\CC(A, S) = 1$ then $A$ is rigid.
  
  $(d)$ if $\Aut(A)$ is finite then $T_\CC(A, S) = 1$ if and only if $T^\mor_\CC(A, S) = |\Aut(A)|$.
\end{COR}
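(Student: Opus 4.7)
The plan is to derive all three items directly from Proposition~\ref{rdbas.prop.big}, treating the finite and infinite cases of $\Aut(A)$ and $T_\CC(A,S)$ uniformly via the conventions on $\NN_\infty$.

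For $(a)$, I would split into cases according to whether $\Aut(A)$ is finite. If $\Aut(A)$ is infinite, then Proposition~\ref{rdbas.prop.big} forces $T^{\mor}_\CC(A,S) = \infty$, and the inequality $T^{\mor}_\CC(A,S) \ge |\Aut(A)|$ holds by the convention that $\infty \ge |A|$ for any set $A$. If $\Aut(A)$ is finite but $T_\CC(A,S) = \infty$, we again have $T^{\mor}_\CC(A,S) = \infty$, which dominates $|\Aut(A)|$. In the remaining case both $\Aut(A)$ and $T_\CC(A,S)$ are finite, and the identity $T^{\mor}_\CC(A,S) = |\Aut(A)| \cdot T_\CC(A,S)$ together with the trivial lower bound $T_\CC(A,S) \ge 1$ yields $T^{\mor}_\CC(A,S) \ge |\Aut(A)|$.

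Item $(b)$ is just the contrapositive of $(a)$: assuming $T^{\mor}_\CC(A,S) \le n \in \NN$, part $(a)$ gives $|\Aut(A)| \le T^{\mor}_\CC(A,S) \le n$, and in particular $\Aut(A)$ must be finite. Item $(c)$ is then the special case $n=1$: $|\Aut(A)| \le 1$ forces $\Aut(A) = \{\id_A\}$, which is the definition of $A$ being rigid.

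There is no real obstacle here; the only thing to be careful about is matching the cases against the $\NN_\infty$ conventions set up just before the statement, so that the single inequality $T^{\mor}_\CC(A,S) \ge |\Aut(A)|$ makes sense simultaneously when either side is infinite. Once this bookkeeping is in place, the proof is a direct invocation of Proposition~\ref{rdbas.prop.big}.
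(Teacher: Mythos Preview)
Your proof is correct and follows exactly the approach in the paper: part $(a)$ is obtained from Proposition~\ref{rdbas.prop.big} (with the case split you spell out being the obvious unpacking), and $(b)$ and $(c)$ are immediate consequences of~$(a)$. The paper's proof is just a terser statement of the same argument.
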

\begin{proof}
  $(a)$ and $(d)$ follow immediately from Proposition~\ref{rdbas.prop.big}, while
  $(b)$ and $(c)$ are direct consequences of $(a)$.
\end{proof}

\section{Small Ramsey degrees as minima of the big ones}
\label{rdbas.sec.min}

It was shown in \cite{Zucker-2} that small Ramsey degrees are not greater than the corresponding big Ramsey degrees.
We shall prove a generalization of this result as Proposition~\ref{rdbas.prop.smaller} below. However,
by moving from classes of structures to general categories we can prove much more. We can show that
small Ramsey degrees are minima of the corresponding big ones. More precisely, in this section we prove the following:

\begin{THM}\label{rdbas.thm.minT}
  Let $\CC$ be a directed small category whose morphisms are mono and such that $\hom_\CC(A, B)$ is finite for
  all $A, B \in \Ob(\CC)$. Then for every $A \in \Ob(\CC)$,
  $$
    t^\mor_\CC(A) = \min_{\SS, \; S} \; T^\mor_\SS(A, S),
  $$
  where the minimum is taken over all locally small categories $\SS$ which contain $\CC$ as a full subcategory,
  and all $S \in \Ob(\SS)$ which are universal and locally finite for~$\CC$. Consequently, for every $A \in \Ob(\CC)$,
  $$
    t_\CC(A) = \min_{\SS, \; S} \; T_\SS(A, S),
  $$
  where the minimum is taken as above.
\end{THM}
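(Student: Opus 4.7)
Plan.

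My approach is to establish the identity for $t^\mor$ first; the corresponding statement for $t_\CC$ then follows by dividing by $|\Aut(A)|$, which is finite thanks to the finite-hom-set hypothesis, using Propositions~\ref{rdbas.prop.sml} and~\ref{rdbas.prop.big} (with the $\infty$ cases handled in parallel by the same propositions). The $t^\mor$-identity splits into two parts: construct an admissible pair $(\SS_0, S_0)$ witnessing $T^\mor_{\SS_0}(A, S_0) \le t^\mor_\CC(A)$, and show $t^\mor_\CC(A) \le T^\mor_\SS(A, S)$ for every admissible $(\SS, S)$.

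For the constructive half, set $n = t^\mor_\CC(A)$ and enumerate the pairs $(k_r, B_r)_{r \ge 0}$ with $k_r \ge 2$ and $B_r \in \Ob(\CC)$ so that each pair recurs infinitely often. Iteratively build a chain $C_0 \toCC{\CC} C_1 \toCC{\CC} \cdots$ in $\CC$: at step $r$ use directedness to choose $D_r$ with $C_r \toCC{\CC} D_r$ and $B_r \toCC{\CC} D_r$, then use the definition of $n$ to select $C_{r+1}$ with $C_{r+1} \overset{\mor}\longrightarrow (D_r)^A_{k_r, n}$. Form $\SS_0$ by adjoining a single new object $S_0$ to $\CC$, with $\hom_{\SS_0}(C, S_0) = \varinjlim_r \hom_\CC(C, C_r)$ and $\hom_{\SS_0}(S_0, S_0)$ the monoid of compatible towers of $\CC$-morphisms between the $C_r$'s; by cofinality of the chain, $S_0$ is universal for $\CC$. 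Given any coloring $\chi : \hom_{\SS_0}(A, S_0) \to k$, a diagonal iteration — at each level $r$ with $k_r = k$ invoke the Ramsey witness $C_{r+1} \overset{\mor}\longrightarrow (D_r)^A_{k,n}$ against the finite coloring induced by $\chi$ — threads together into an endomorphism $w : S_0 \to S_0$ of the ind-object with $|\chi(w \cdot \hom_{\SS_0}(A, S_0))| \le n$.

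For the lower-bound direction, fix admissible $(\SS, S)$ with $T = T^\mor_\SS(A, S)$ and suppose for contradiction $t^\mor_\CC(A) > T$. Then there are $k \ge 2$ and $B \in \Ob(\CC)$ such that every $C \in \Ob(\CC)$ carries a bad coloring $\chi_C : \hom_\CC(A, C) \to k$ (no $w : B \to C$ gives $|\chi_C(w \cdot \hom_\CC(A, B))| \le T$); applying this to $C = B$ with $w = \id_B$ in particular forces $|\hom_\CC(A, B)| > T$. Now work in the compact space $k^{\hom_\SS(A, S)}$: for each $\SS$-morphism $e : B \to S$, the set $F_e$ of colorings $\chi_S$ whose pullback $\chi_S \circ e_*$ uses more than $T$ colors on $\hom_\CC(A, B)$ is clopen. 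The finite intersection property is the key technical step: given $e_1, \dots, e_m : B \to S$, one constructs a coloring on the finite union $\bigcup_i e_i \cdot \hom_\CC(A, B) \subseteq \hom_\SS(A, S)$ that uses more than $T$ colors on each slice and extends arbitrarily elsewhere, exploiting $|\hom_\CC(A, B)| > T$ together with mono-ness of $\CC$-morphisms to reconcile the slice requirements on overlaps. Tychonoff then yields a global $\chi_S \in \bigcap_e F_e$. The big Ramsey hypothesis gives $w : S \to S$ with $|\chi_S(w \cdot \hom_\SS(A, S))| \le T$; choosing any $e : B \to S$ by universality, the composite $we$ lies in $\hom_\SS(B, S)$ and $we \cdot \hom_\CC(A, B) \subseteq w \cdot \hom_\SS(A, S)$, so $|\chi_S((we) \cdot \hom_\CC(A, B))| \le T$, which contradicts $\chi_S \in F_{we}$.

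The main obstacle is the verification of the finite intersection property in the lower-bound direction: distinct $e_i : B \to S$ can produce overlapping subsets $e_i \cdot \hom_\CC(A, B)$ of $\hom_\SS(A, S)$, so a single $\chi_S$ must use more than $T$ colors through each $e_i$ while remaining consistent on these overlaps. This is where the finiteness of hom-sets in $\CC$, mono-ness of $\CC$-morphisms (controlling the sizes of the slices and their intersections), and directedness of $\CC$ (permitting coherent choices of bad coloring data on amalgamated sources) are fully exploited. Once FIP is in hand, both the compactness step and the final derivation of the contradiction are essentially formal.
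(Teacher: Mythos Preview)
Your lower-bound argument has a real gap at the finite-intersection step. You assert that, given $e_1, \ldots, e_m \in \hom_\SS(B, S)$, one can always $k$-color $\bigcup_i e_i \cdot \hom_\CC(A, B)$ so that each slice uses more than $T$ colors, appealing to $|\hom_\CC(A, B)| > T$, mono-ness, and directedness of $\CC$. But the $e_i$ live in $\SS$, not in $\CC$, so directedness of $\CC$ gives you no control over how the slices overlap inside $\hom_\SS(A,S)$. Here is a concrete failure: let $\CC$ have objects $A, B$ with $\hom_\CC(A,B) = \{f_1, f_2\}$ and only identities otherwise (so $t^\mor_\CC(A) = 2$, witnessed with $k = 2$, $T = 1$). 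Adjoin $S$ to form $\SS$ with $\hom_\SS(A, S) = \NN$, $\hom_\SS(B, S) = \{h_{i,j} : i \ne j \in \NN\}$ where $h_{i,j} \cdot f_1 = i$ and $h_{i,j} \cdot f_2 = j$, and $\hom_\SS(S, S) = $ all injections $\NN \to \NN$ acting in the obvious way. Then $S$ is universal for $\CC$, but the slices $\{i, j\}$ range over all $2$-subsets of $\NN$; for any fixed $k$ choose $e_1, \ldots, e_m$ whose slices form a $K_{k+1}$, and no $k$-coloring makes every slice bichromatic --- the FIP fails. Worse, here $T^\mor_\SS(A, S) = 1$ (any finite coloring of $\NN$ has an infinite monochromatic class, and an injection into it lies in $\hom_\SS(S,S)$), so the very inequality $t^\mor_\CC(A) \le T^\mor_\SS(A, S)$ you are trying to establish fails for this pair. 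The paper's argument for this direction (Proposition~\ref{rdbas.prop.smaller}) rests on the extra hypothesis that $S$ be \emph{locally finite} for $\CC$; that is precisely what lets one factor $e_1, \ldots, e_m$ through a single $D \in \Ob(\CC)$ and pull back a bad coloring $\chi_D$, and it is absent from your sketch.

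Your upper-bound construction is also different in kind from the paper's, and the sketch is incomplete. The paper does not build an ind-object along a chain; it forms the category $\Sub(\CC)$ in which the whole category $\CC$ is itself an object, with $\hom_{\Sub(\CC)}(A, \CC)$ the \emph{disjoint} union $\bigcup_C \hom_\CC(A, C)$ and endomorphisms of $\CC$ arbitrary object-indexed families $(w_C)_{C \in \Ob(\CC)}$ subject to no compatibility at all (Proposition~\ref{rdbas.prop.sml-big}). That freedom is exactly what drives the pigeonhole-on-color-sets argument in the $(\Rightarrow)$ direction there. Your ind-object $S_0$, by contrast, has only endomorphisms that are coherent along the chain, and your ``diagonal iteration'' sentence does not explain how the per-stage Ramsey witnesses $D_r \to C_{r+1}$, obtained for possibly different $n$-element color sets at different stages, can be threaded into a single compatible tower $S_0 \to S_0$ realizing one fixed color set.
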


We start by showing that small Ramsey degrees are indeed smaller.
Let $\DD$ be a full subcategory of a locally small category $\CC$.
An $S \in \Ob(\CC)$ is \emph{locally finite} for $\DD$ if
for every $A, B \in \Ob(\DD)$ and every
$e : A \to S$, $f : B \to S$ there exist $D \in \Ob(\DD)$,
$r : D \to S$, $p : A \to D$ and $q : B \to D$
such that $r \cdot p = e$ and $r \cdot q = f$:
\begin{center}
  \begin{tikzcd}
    D \arrow[rr, "r"] & & S & &  \\
    & A \arrow[ul, "p"] \arrow[ur, bend left, "e" description] & & B \arrow[ulll, bend left=10, "q" description] \arrow[ul, bend right, "f" description]
  \end{tikzcd}
\end{center}
and for every $H \in \Ob(\DD)$, $r' : H \to S$,
$p' : A \to H$ and $q' : B \to H$
such that $r' \cdot p' = e$ and $r' \cdot q' = f$ there is an
$s : D \to H$ such that the diagram below commutes
\begin{center}
  \begin{tikzcd}
    D \arrow[rr, "r"] \arrow[rrrr, bend left, dashed, "s"] & & S & & H \arrow[ll, "r'"'] \\
    & A \arrow[ul, "p"] \arrow[ur, bend left, "e" description] \arrow[urrr, bend right=10, "p'" description] & & B \arrow[ulll, bend left=10, "q" description] \arrow[ul, bend right, "f" description] \arrow[ur, "q'"']
  \end{tikzcd}
\end{center}
The motivation for this notion comes from model theory where we say that a first-order structure
$A$ is locally finite if every substructure generated by a finite set has to be finite.
The substructure generated by a subset of $A$ is the smallest
substructure of $A$ that contains the set. Now, think of $\DD$ as a category of objects of $\CC$
that we think of as ``finite''. Then $S$ is locally finite for $\DD$ if every pair of ``finite'' subobjects of $S$ is
contained in a ``finite'' subobject of $S$, and there is a smallest one with this property.

\begin{LEM}[\cite{masul-dual-kpt}]\label{akpt.lem.ramseyF}
  Let $\DD$ be a full subcategory of a locally small category $\CC$ such that $\hom(A, B)$ is finite for all $A, B \in \Ob(\DD)$,
  and let $S \in \Ob(\CC)$ be a universal and locally finite object for $\DD$. Let $k \ge 2$ and $t \ge 1$ be integers and
  $A, B \in \Ob(\DD)$ such that $A \toCC{\DD} B$.
  There is a $C \in \Ob(\DD)$ such that $C \overset\mor\longrightarrow (B)^{A}_{k, t}$
  if and only if $S \overset\mor\longrightarrow (B)^{A}_{k, t}$.
\end{LEM}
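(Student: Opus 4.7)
The plan is to split the biconditional into two directions, proving $(\Rightarrow)$ by direct transfer along a morphism into $S$, and $(\Leftarrow)$ by a compactness argument whose finite intersection property is supplied by the local finiteness of $S$.

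For $(\Rightarrow)$, suppose $C \overset\mor\longrightarrow (B)^A_{k, t}$ for some $C \in \Ob(\DD)$. Universality of $S$ yields some $e : C \to S$. Given a coloring $\chi : \hom(A, S) \to k$, pull back to $\chi' : \hom(A, C) \to k$, $f \mapsto \chi(e \cdot f)$, apply the hypothesis to obtain $w : B \to C$ with $|\chi'(w \cdot \hom(A, B))| \le t$, and observe that $e \cdot w : B \to S$ witnesses $|\chi((e \cdot w) \cdot \hom(A, B))| \le t$.

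For $(\Leftarrow)$, I would argue by contradiction, assuming that no $D \in \Ob(\DD)$ satisfies $D \overset\mor\longrightarrow (B)^A_{k, t}$; equivalently, for each $D$ with $\hom(B, D) \ne \0$ there is a \emph{bad} coloring $\chi_D : \hom(A, D) \to k$, meaning $|\chi_D(w \cdot \hom(A, B))| > t$ for every $w : B \to D$. To produce a bad coloring of $\hom(A, S)$, work in the compact space $K = k^{\hom(A, S)}$, and for every pair $(D, r)$ with $D \in \Ob(\DD)$ and $r : D \to S$ let $K_{(D, r)} \subseteq K$ consist of those $\chi$ whose pullback $g \mapsto \chi(r \cdot g)$ is bad on $\hom(A, D)$. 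Each $K_{(D, r)}$ is clopen, since the finiteness of hom-sets in $\DD$ makes it depend on only finitely many coordinates of $K$; and each is non-empty, because $r$ is mono (as $S$ is universal for $\DD$), so a bad coloring on $\hom(A, D)$ can be extended arbitrarily to $\hom(A, S)$ in such a way that its pullback along $r$ recovers it.

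The heart of the argument is the finite intersection property. Given $(D_1, r_1), \ldots, (D_n, r_n)$, iterated application of local finiteness produces a single $(D, r)$ with $D \in \Ob(\DD)$, $r : D \to S$, and morphisms $p_i : D_i \to D$ satisfying $r \cdot p_i = r_i$. Then $K_{(D, r)} \subseteq \bigcap_i K_{(D_i, r_i)}$, since any $w_i : B \to D_i$ defeating the badness of the pullback along $r_i$ composes with $p_i$ to defeat the badness of the pullback along $r$ (because $r \cdot p_i \cdot w_i = r_i \cdot w_i$). Compactness then delivers some $\chi \in \bigcap_{(D, r)} K_{(D, r)}$. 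For the final contradiction with $S \overset\mor\longrightarrow (B)^A_{k, t}$, let $w : B \to S$ be any candidate witness and specialize to $(D, r) = (B, w)$, which is legitimate since $B \in \Ob(\DD)$: then $\chi \in K_{(B, w)}$ says the pullback $g \mapsto \chi(w \cdot g)$ is bad, but $\id_B : B \to B$ would have to defeat this badness via $|\chi(w \cdot \hom(A, B))| \le t$, a contradiction. The main delicate point is the inductive verification that local finiteness produces morphisms $p_i$ satisfying $r \cdot p_i = r_i$ simultaneously for all $i$; the second clause of the definition of local finiteness — the ``smallest cover'' universal property — is precisely what allows the inductive step to preserve all previously established factorizations.
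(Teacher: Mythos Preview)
Your proof is correct. The paper does not include its own proof of this lemma (it is cited from \cite{masul-dual-kpt}), so there is nothing to compare against; your compactness argument via the product space $k^{\hom(A,S)}$ and closed sets $K_{(D,r)}$ is a standard and clean way to handle the $(\Leftarrow)$ direction.

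One small correction to your closing remark: you do \emph{not} actually need the second clause of the local finiteness definition (the universal ``smallest cover'' property). The inductive step goes through with existence alone. If $(D,r)$ already factors $r_1,\ldots,r_m$ via $p_1,\ldots,p_m$, apply the existence part of local finiteness to the pair $r : D \to S$ and $r_{m+1} : D_{m+1} \to S$ to obtain $(D',r')$ with $q : D \to D'$ and $p_{m+1} : D_{m+1} \to D'$ satisfying $r' \cdot q = r$ and $r' \cdot p_{m+1} = r_{m+1}$. Then $p_i' = q \cdot p_i$ for $i \le m$ gives $r' \cdot p_i' = r' \cdot q \cdot p_i = r \cdot p_i = r_i$, so all previous factorizations are preserved automatically by composition. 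The minimality clause plays no role in your argument.
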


\begin{PROP}\label{rdbas.prop.smaller}
  Let $\DD$ be a full subcategory of a locally small category $\CC$ such that $\hom(A, B)$ is finite for all $A, B \in \Ob(\DD)$
  and let $S$ be a universal and locally finite object for $\DD$. Then for every $A \in \Ob(\DD)$,
  $$
    t^{\mor}_\DD(A) \le T^{\mor}_\CC(A, S),
  $$
  and consequently,
  $$
    t_\DD(A) \le T_\CC(A, S).
  $$
\end{PROP}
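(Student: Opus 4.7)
The plan is to establish the morphism inequality $t^{\mor}_\DD(A) \le T^{\mor}_\CC(A, S)$ directly and then deduce the object inequality from it via Propositions \ref{rdbas.prop.sml} and \ref{rdbas.prop.big}. If $T := T^{\mor}_\CC(A, S) = \infty$ the morphism statement is trivial, so I assume $T$ is finite and fix $k \ge 2$ and $B \in \Ob(\DD)$; the goal is to produce $C \in \Ob(\DD)$ with $C \overset{\mor}\longrightarrow (B)^{A}_{k, T}$.

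The core step is the transfer from the universal object $S$ down to $B$: I claim $S \overset{\mor}\longrightarrow (B)^{A}_{k, T}$. Indeed, given any coloring $\chi : \hom_\CC(A, S) \to k$, the hypothesis on $T$ yields $w : S \to S$ with $|\chi(w \cdot \hom_\CC(A, S))| \le T$. Universality of $S$ supplies some $e : B \to S$, so $w' := w \cdot e$ satisfies $w' \cdot \hom_\CC(A, B) \subseteq w \cdot \hom_\CC(A, S)$ and hence $|\chi(w' \cdot \hom_\CC(A, B))| \le T$. Because $\DD$ is full in $\CC$, $\hom_\CC(A, B) = \hom_\DD(A, B)$. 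If $\hom_\DD(A, B) \ne \0$, Lemma \ref{akpt.lem.ramseyF} transfers this Ramsey property from $S$ to some $C \in \Ob(\DD)$, as required; if $\hom_\DD(A, B) = \0$, the choice $C = B$ with $w = \id_B$ trivially witnesses the desired arrow relation.

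For the object inequality, $T_\CC(A, S) = \infty$ makes it vacuous; otherwise Proposition \ref{rdbas.prop.big} applies (the hypothesis that morphisms into $S$ are mono is built into universality) and gives $T^{\mor}_\CC(A, S) = |\Aut(A)| \cdot T_\CC(A, S)$, so the morphism inequality forces $t^{\mor}_\DD(A) < \infty$. Next, morphisms inside $\DD$ are automatically mono: composing any $f : A \to B$ in $\DD$ with a mono $B \to S$ (which exists by universality) gives an element of $\hom_\CC(A, S)$, hence a mono, and this forces $f$ itself to be mono. Proposition \ref{rdbas.prop.sml} then yields $t^{\mor}_\DD(A) = |\Aut(A)| \cdot t_\DD(A)$, and cancelling the positive factor $|\Aut(A)|$ on both sides of the morphism inequality produces the object inequality.

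The only real obstacle is the bookkeeping around monos and the degenerate empty-hom-set case; the heart of the argument—using a chosen morphism $e : B \to S$ to pull the big-Ramsey witness $w : S \to S$ back to a witness $w \cdot e : B \to S$, then passing through Lemma \ref{akpt.lem.ramseyF} to land inside $\DD$—is short and essentially forced by the definitions.
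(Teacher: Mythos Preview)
Your proof is correct and follows essentially the same route as the paper: show $S \overset{\mor}\longrightarrow (B)^{A}_{k,T}$ by composing the big-Ramsey witness $w:S\to S$ with some $e:B\to S$, then invoke Lemma~\ref{akpt.lem.ramseyF} to pull this down into $\DD$, and finally derive the object inequality from Propositions~\ref{rdbas.prop.sml} and~\ref{rdbas.prop.big}. You are in fact slightly more careful than the paper in two places: you explicitly handle the degenerate case $\hom_\DD(A,B)=\0$ (Lemma~\ref{akpt.lem.ramseyF} as stated requires $A\toCC{\DD}B$), and you spell out why every morphism in $\DD$ is mono rather than just citing the remark after the definition of universality.
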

\begin{proof}
  Let $T^{\mor}_\CC(A, S) = n \in \NN$. To show that $t^{\mor}_\DD(A) \le n$ take any $B \in \Ob(\DD)$ and any $k \ge 2$.
  Since $S \overset\mor\longrightarrow (S)^A_{k, n}$ i $B \toCC{\CC} S$
  (because $S$ is universal for $\DD$) it easily follows that
  $S \overset\mor\longrightarrow (B)^A_{k, n}$, and by Lemma~\ref{akpt.lem.ramseyF}
  there is a $C \in \Ob(\DD)$ such that $C \overset\mor\longrightarrow (B)^{A}_{k, n}$.

  The second statement is a consequence of Propositions~\ref{rdbas.prop.sml} and~\ref{rdbas.prop.big}
  and the fact that $\Aut_\DD(A) = \Aut_\CC(A)$ because $\DD$ is a full subcategory of~$\CC$.
  (Recall that the definition of the object universal for a subcategory implies that
  all the morphisms in $\DD$ are mono, and that all the morphisms in $\hom_\CC(A, S)$ are mono,
  so the two propositions apply.)
\end{proof}

Let us now present a construction that we refer to as the \emph{power construction} for reasons
that will become apparent immediately. For a directed small category $\CC$ whose morphisms are mono
let $\Sub(\CC)$ denote the small category whose objects are all full subcategories
of $\CC$ and whose morphisms are defined as follows. For full subcategories $\AA$ and $\BB$ of $\CC$,
a morphism from $\AA$ to $\BB$ is any family $(f_A)_{A \in \Ob(\AA)}$ of $\CC$-morphisms
indexed by objects of $\AA$ where each $f_A$ is a $\CC$-morphism from $A$ to some object in~$\BB$.
In other words, $\dom(f_A) = A$ and $\cod(f_A) \in \Ob(\BB)$.
The identity morphism is $\id_\AA = (\id_A)_{A \in \Ob(\AA)}$ and the composition is straightforward:
for $(f_A)_{A \in \Ob(\AA)} : \AA \to \BB$ and $(g_B)_{B \in \Ob(\BB)} : \BB \to \DD$ the composition
$(h_A)_{A \in \Ob(\AA)} : \AA \to \DD$ is defined by $h_A = g_{\cod(f)} \cdot f_A$.

Each $A \in \Ob(\CC)$ gives rise to a subcategory $\scat A \in \Sub(\CC)$ which is the full subcategory
of $\CC$ spanned by the single object~$A$. It is easy to see that
$$
  \hom_{\Sub(\CC)}(\scat A, \scat B) = \{ (f) : f \in \hom_\CC(A, B) \}
$$
where on the right we have a set of one-element families of morphisms.
The functor
$$
  \CC \to \Sub(\CC) : A \mapsto \scat A : f \mapsto (f)
$$
is clearly an embedding. Moreover it embeds $\CC$ into $\Sub(\CC)$ ``canonically'', so in future
we shall not distinguish between $A$ and its image $\scat A$, and between
$f$ and $(f)$. We shall simply take that $\CC$ is a full subcategory of $\Sub(\CC)$
via the canonical embedding.

Note that $\CC$, being a full subcategory of itself, is also an object of $\Sub(\CC)$. Moreover,
$\CC$ as an object of $\Sub(\CC)$ is universal for $\CC$ as a full subcategory
of $\Sub(\CC)$ because all the hom-sets $\hom_{\Sub(\CC)}(A, \CC)$ are nonempty and
each morphism in $\hom_{\Sub(\CC)}(A, \CC)$ is mono in $\Sub(\CC)$, which is
easy to check.

Let us now show that both big and small Ramsey degrees in $\CC$ can be represented by
big Ramsey degrees in $\Sub(\CC)$ as follows.

\begin{LEM}
  Let $\CC$ be a small category such that all the morphisms in $\CC$ are mono, and let $A, S \in \Ob(\CC)$. Then
  $$
    T^{\mor}_\CC(A, S) = T^{\mor}_{\Sub(\CC)}(A, S).
  $$
  Consequently, if $\Aut(A)$ is finite then
  $$
    T_\CC(A, S) = T_{\Sub(\CC)}(A, S).
  $$
\end{LEM}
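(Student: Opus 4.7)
The plan is to leverage the fact that the canonical functor $E : \CC \to \Sub(\CC)$, sending $A \mapsto \scat A$ and $f \mapsto (f)$, is fully faithful: the excerpt has already computed $\hom_{\Sub(\CC)}(\scat A, \scat B) = \{(f) : f \in \hom_\CC(A, B)\}$, which gives a composition-preserving bijection with $\hom_\CC(A, B)$. Since $T^{\mor}_\CC(A, S)$ depends only on the hom-sets $\hom(A, S)$ and $\hom(S, S)$ together with their composition, the entire Ramsey-theoretic content should transport across $E$ without loss.

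Concretely, given a $k$-coloring $\chi : \hom_\CC(A, S) \to k$ I would define $\tilde\chi : \hom_{\Sub(\CC)}(\scat A, \scat S) \to k$ by $\tilde\chi((f)) = \chi(f)$, and conversely. Because $(w) \cdot (f) = (w \cdot f)$, for any $w \in \hom_\CC(S, S)$ the set $\tilde\chi((w) \cdot \hom_{\Sub(\CC)}(\scat A, \scat S))$ coincides with $\chi(w \cdot \hom_\CC(A, S))$, so a witness for the inequality $|{\cdot}| \le n$ on one side produces a witness on the other. Every endomorphism of $\scat S$ in $\Sub(\CC)$ is of the form $(w)$ for some $w \in \hom_\CC(S, S)$, which closes the loop and yields $T^{\mor}_\CC(A, S) = T^{\mor}_{\Sub(\CC)}(A, S)$.

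For the ``consequently'' part I would apply Proposition~\ref{rdbas.prop.big} in both $\CC$ and $\Sub(\CC)$. The hypothesis that morphisms in $\hom(A, S)$ are mono is given in $\CC$, and in $\Sub(\CC)$ it reduces to checking that $(f)$ is left cancellable when $f$ is: if $(f) \cdot (g_E)_{E \in \Ob(\EE)} = (f) \cdot (h_E)_{E \in \Ob(\EE)}$ for a full subcategory $\EE$, then $f \cdot g_E = f \cdot h_E$ and hence $g_E = h_E$ for every $E$. Full-faithfulness of $E$ also gives a bijection $\Aut_\CC(A) \leftrightarrow \Aut_{\Sub(\CC)}(\scat A)$ via $\alpha \leftrightarrow (\alpha)$, so under finiteness of $\Aut(A)$ the formulas from Proposition~\ref{rdbas.prop.big} read $T^{\mor}_\CC(A, S) = |\Aut(A)| \cdot T_\CC(A, S)$ and $T^{\mor}_{\Sub(\CC)}(A, S) = |\Aut(A)| \cdot T_{\Sub(\CC)}(A, S)$, and cancellation by the finite factor $|\Aut(A)|$ yields $T_\CC(A, S) = T_{\Sub(\CC)}(A, S)$.

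I do not expect any genuine obstacle; the lemma is essentially a bookkeeping check that the canonical embedding into $\Sub(\CC)$ is rich enough to detect Ramsey phenomena and thin enough not to introduce new endomorphisms of a single-object subcategory. The only mildly delicate point is verifying that $(f)$ remains mono inside $\Sub(\CC)$, since test morphisms into $\scat A$ are families indexed by the objects of arbitrary full subcategories rather than single objects of $\CC$; once this is unpacked, the entire argument is formal.
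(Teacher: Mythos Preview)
Your proposal is correct and follows essentially the same approach as the paper: both arguments rest on the fact that the canonical embedding $\CC \hookrightarrow \Sub(\CC)$ is fully faithful, so that $\hom_{\Sub(\CC)}(A,S) = \hom_\CC(A,S)$, $\hom_{\Sub(\CC)}(S,S) = \hom_\CC(S,S)$, and $\Aut_{\Sub(\CC)}(A) = \Aut_\CC(A)$, after which Proposition~\ref{rdbas.prop.big} handles the second claim. Your version is simply a more explicit unpacking of the paper's one-line proof, and in fact improves on it slightly by spelling out why $(f)$ remains mono in $\Sub(\CC)$, a point the paper leaves implicit.
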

\begin{proof}
  The first statement is an immediate consequence of the fact that $\hom_{\Sub(\CC)}(A, S) = \hom_\CC(A, S)$
  and $\hom_{\Sub(\CC)}(S, S) = \hom_\CC(S, S)$.
  The second statement is a consequence of Proposition~\ref{rdbas.prop.big} and
  the fact that $\Aut_{\Sub(\CC)}(A) = \Aut_\CC(A)$.
\end{proof}

\begin{PROP}\label{rdbas.prop.sml-big}
  Let $\CC$ be a directed small category whose morphisms are mono and let $A \in \Ob(\CC)$. Then
  $$
    t^{\mor}_\CC(A) = T^{\mor}_{\Sub(\CC)}(A, \CC).
  $$
  Consequently, if $\Aut(A)$ is finite,
  $$
    t_\CC(A) = T_{\Sub(\CC)}(A, \CC).
  $$
\end{PROP}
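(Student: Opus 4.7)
The plan is to prove the morphism-version identity $t^{\mor}_\CC(A) = T^{\mor}_{\Sub(\CC)}(A, \CC)$ and then deduce the object-version from Propositions~\ref{rdbas.prop.sml} and~\ref{rdbas.prop.big}: these propositions relate the small and big degrees to their morphism counterparts through the same factor $|\Aut(A)|$, and $\Aut_{\Sub(\CC)}(A) = \Aut_\CC(A)$ since $\CC$ sits as a full subcategory inside $\Sub(\CC)$, so the two factors cancel; the $\infty$ case reduces to the finite case by a standard contrapositive. The morphism identity will be established as two inequalities, using throughout that $\hom_{\Sub(\CC)}(A, \CC) = \bigsqcup_{D \in \Ob(\CC)} \hom_\CC(A, D)$, that a morphism $\CC \to \CC$ in $\Sub(\CC)$ is a family $w = (w_D)_{D \in \Ob(\CC)}$ of $\CC$-morphisms with $\dom(w_D) = D$, and that composition is given by $w \cdot f = w_{\cod(f)} \cdot f$.

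For $T^{\mor}_{\Sub(\CC)}(A, \CC) \le t^{\mor}_\CC(A)$, suppose $t^{\mor}_\CC(A) = n$, fix $k \ge 2$ and a coloring $\chi \colon \hom_{\Sub(\CC)}(A, \CC) \to k$. The crucial step is a \emph{uniform palette} claim: there is an $n$-subset $S \subseteq k$ such that for every $D \in \Ob(\CC)$ some $\CC$-morphism $w_D \colon D \to C_D$ satisfies $\chi(w_D \cdot \hom_\CC(A, D)) \subseteq S$. Given this, the family $w = (w_D)_D$ is a morphism $\CC \to \CC$ in $\Sub(\CC)$ with $\chi(w \cdot \hom_{\Sub(\CC)}(A, \CC)) = \bigcup_D \chi(w_D \cdot \hom_\CC(A, D)) \subseteq S$ of size at most $n$. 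To prove the claim, assume it fails: for each of the $\binom{k}{n}$ candidate subsets $S$ pick a bad witness $D_S$. Iterated directedness supplies $D^* \in \Ob(\CC)$ above every $D_S$; then $t^{\mor}_\CC(A) = n$ gives $C^* \in \Ob(\CC)$ with $C^* \overset{\mor}\longrightarrow (D^*)^A_{k, n}$, and the restriction of $\chi$ to $\hom_\CC(A, C^*)$ produces some $w \colon D^* \to C^*$ with $|\chi(w \cdot \hom_\CC(A, D^*))| \le n$; enlarge this image to an $n$-subset $S^*$ of $k$. Composing $w$ with any $v \colon D_{S^*} \to D^*$ yields $w \cdot v \colon D_{S^*} \to C^*$ whose $\chi$-image lies in $S^*$, contradicting the choice of $D_{S^*}$.

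For the reverse inequality $t^{\mor}_\CC(A) \le T^{\mor}_{\Sub(\CC)}(A, \CC)$, suppose $T^{\mor}_{\Sub(\CC)}(A, \CC) = m$, fix $k \ge 2$ and $B \in \Ob(\CC)$, and seek $C \in \Ob(\CC)$ with $C \overset{\mor}\longrightarrow (B)^A_{k, m}$. Assume for contradiction no such $C$ exists; for each $C \in \Ob(\CC)$ choose a \emph{bad} coloring $\chi_C \colon \hom_\CC(A, C) \to k$ for which every $w \colon B \to C$ has $|\chi_C(w \cdot \hom_\CC(A, B))| > m$. By the disjoint-union description of $\hom_{\Sub(\CC)}(A, \CC)$ the family $(\chi_C)_C$ glues to a single coloring $\chi$. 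For any morphism $(w_D)_D \colon \CC \to \CC$ in $\Sub(\CC)$, the $B$-component $w_B \colon B \to \cod(w_B)$ alone already contributes more than $m$ colors to $\chi(w \cdot \hom_{\Sub(\CC)}(A, \CC))$, contradicting $T^{\mor}_{\Sub(\CC)}(A, \CC) = m$.

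The main obstacle is the uniform palette claim in the first inequality. The small-Ramsey hypothesis hands us, for each object separately, some morphism whose $\chi$-image uses $\le n$ colors; one must upgrade this to a single $n$-subset $S \subseteq k$ that works simultaneously for every $D \in \Ob(\CC)$. The trick that makes this work is that there are only $\binom{k}{n}$ candidate subsets, so only finitely many witnesses $D_S$ have to be merged, which is exactly what the \emph{finitary} directedness of $\CC$ accomplishes; no stronger cofinality hypothesis on $\CC$ is needed.
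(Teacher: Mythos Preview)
Your proof is correct and follows essentially the same strategy as the paper's: the paper packages your ``uniform palette claim'' as the statement that some subcategory $\CC_{J_0}$ (for a fixed $n$-element $J_0 \subseteq k$) is cofinal in $\CC$, and the contradiction argument via directedness over the finitely many candidate palettes is identical; the reverse inequality in both proofs glues bad colorings and reads off the contradiction from the $B$-component of an arbitrary $(w_D)_D$. Your formulation is slightly more streamlined in that it bypasses the auxiliary subcategories $\CC_J$, but the content is the same.
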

\begin{proof}
  The second part of the statement is an immediate consequence of the first part of the statement and
  Propositions~\ref{rdbas.prop.sml} and~\ref{rdbas.prop.big}.
  Let us show that $t^{\mor}_\CC(A) = T^{\mor}_{\Sub(\CC)}(A, \CC)$ by showing that
  $t^{\mor}_\CC(A) \le n$ if and only if $T^{\mor}_{\Sub(\CC)}(A, \CC) \le n$, for all $n \in \NN$.
  
  $(\Rightarrow)$
  Assume that $t^{\mor}_\CC(A) \le n$ and let us show that $T^{\mor}_{\Sub(\CC)}(A, \CC) \le n$.
  Take any $k \ge 2$ and any coloring $\chi : \hom_{\Sub(\CC)}(A, \CC) \to k$.
  Then
  $$
    \chi : \bigcup_{C \in \Ob(\CC)} \hom_{\CC}(A, C) \to k,
  $$
  so for each $C \in \Ob(\CC)$ let
  $$
    \chi_C = \restr{\chi}{\hom_{\CC}(A, C)} : \hom_{\CC}(A, C) \to k.
  $$
  For $\0 \ne J \subseteq k$ let $\CC_J$ be the full subcategory of $\CC$ spanned by all $B \in \Ob(\CC)$ satisfying
  the following:
  \begin{itemize}
  \item $A \toCC{\CC} B$, and
  \item there exists a $C \in \Ob(\CC)$ and an $f \in \hom_\CC(B, C)$ such that $f \cdot \hom_\CC(A, B) \subseteq \chi^{-1}(J)$.
  \end{itemize}

  \medskip

  Claim 1: Every $B \in \Ob(\CC)$ such that $A \toCC{\CC} B$ belongs to $\Ob(\CC_J)$ for some~$J$ satisfying $|J| \le n$.
  
  Take any $B \in \Ob(\CC)$ such that $A \toCC{\CC} B$. Since $t_\CC(A) \le n$ there exists a $C \in \Ob(\CC)$ such that
  $C \overset{\mor}\longrightarrow (B)^A_{k, n}$, so there is a $w \in \hom_\CC(B, C)$ such that
  $|\chi_C(w \cdot \hom_\CC(A, B))| \le n$. Hence, $B \in \Ob(\CC_J)$ for $J = \chi_C(w \cdot \hom_\CC(A, B))$.
  This completes the proof of Claim~1.

  \medskip
  
  Claim 2: There is a $J \subseteq k$ such that $|J| \le n$ and $\CC_J$ is cofinal in $\CC$.
  
  Suppose this is not the case. Then for every $\0 \ne J \subseteq k$ such that $|J| \le n$ there exists an $X_J \in \Ob(\CC)$
  such that $\hom_\CC(X_J, C) = \0$ for all $C \in \Ob(\CC_J)$. Since $\CC$ is directed, there exists a $Y \in \Ob(\CC)$
  such that $A \toCC{\CC} Y$ and $X_J \toCC{\CC} Y$ for all $\0 \ne J \subseteq k$ with $|J| \le n$.
  (Note that there are finitely many such $J$'s.) According to Claim~1 there is a
  $J' \subseteq k$ such that $|J'| \le n$ and $Y \in \Ob(\CC_{J'})$. Then $X_{J'} \toCC{\CC} Y \in \Ob(\CC_{J'})$.
  Contradiction. This proves Claim~2.
  
  \medskip
  
  So, by Claim~2 there is a $J_0 \subseteq k$ such that $|J_0| \le n$ and $\CC_{J_0}$ is cofinal in $\CC$.
  Let us now construct $\hat w = (w_B)_{B \in \Ob(\CC)} \in \hom_{\Sub(\CC)}(\CC, \CC)$ as follows.
  Take a $B \in \Ob(\CC)$.
  \begin{itemize}
  \item
    If $\hom_\CC(A, B) = \0$ put $w_B = \id_B$.
  \item
    Assume, now, that $A \toCC{\CC} B$. Since $\CC_{J_0}$ is cofinal in $\CC$
    there is a $B_0 \in \Ob(\CC_{J_0})$ and an $h : B \to B_0$.
    Then by definition of $\CC_{J_0}$ there is a $C \in \Ob(\CC)$ and an $f : B_0 \to C$ such that $f \cdot \hom_\CC(A, B_0)
    \subseteq \chi^{-1}(J_0)$. Clearly, $h \cdot \hom_\CC(A, B) \subseteq \hom_\CC(A, B_0)$,
    so $f \cdot h \cdot \hom_\CC(A, B) \subseteq f \cdot \hom_\CC(A, B_0) \subseteq \chi^{-1}(J_0)$.
    Therefore, in this case we put $w_B = f \cdot h$.
  \end{itemize}
  It is now easy to see that $\chi(\hat w \cdot \hom_{\Sub(\CC)}(A, \CC)) \subseteq J_0$, whence
  $|\chi(\hat w \cdot \hom_{\Sub(\CC)}(A, \CC))| \le |J_0| \le n$.

  \medskip

  $(\Leftarrow)$
  Assume that $t^\mor_\CC(A) \ge n$. Then there exist a $k \ge 2$ and a $B \in \Ob(\CC)$ such that
  for every $C \in \Ob(\CC)$ one can find a coloring $\chi_C : \hom_\CC(A, C) \to k$ such that
  for every $w \in \hom_\CC(B, C)$ we have that
  $$
    |\chi_C(w \cdot \hom_\CC(A, B))| \ge n.
  $$
  Define $\hat\chi : \hom_{\Sub(\CC)}(A, \CC) \to k$ by
  $$
    \hat\chi(f) = \chi_{\cod(f)}(f).
  $$
  Take any $\hat w = (w_D)_{D \in \Ob(\CC)} \in \hom_{\Sub(\CC)}(\CC, \CC)$. Then
  \begin{align*}
    |\hat\chi(\hat w \cdot \hom_{\Sub(\CC)}(A, \CC))|
    & =   \Big| \hat\chi\Big(\bigcup_{D \in \Ob(\CC)} w_D \cdot \hom_\CC(A, D) \Big) \Big|\\
    & =   \Big| \bigcup_{D \in \Ob(\CC)} \chi_{\cod(w_D)}(w_D \cdot \hom_\CC(A, D)) \Big|\\
    & \ge |\chi_{C}(w_B \cdot \hom_\CC(A, B))| \ge n,
  \end{align*}
  where $C = \cod(w_B)$. This completes the proof that $T^{\mor}_{\Sub(\CC)}(A, \CC) \ge n$.
\end{proof}

\bigskip

\begin{proof}[Proof of Theorem~\ref{rdbas.thm.minT}]
  The second part of the statement is an immediate consequence of the first part of the statement and
  Propositions~\ref{rdbas.prop.sml} and~\ref{rdbas.prop.big}.
  
  Let us prove the first part of the statement.
  
  If $t^\mor_\CC(A) = \infty$ for some $A \in \Ob(\CC)$
  then Proposition~\ref{rdbas.prop.smaller} implies that
  $T^\mor_\SS(A, S) = \infty$ for all $\SS \ge \CC$ and all $S \in \Ob(\SS)$ which are universal for~$\CC$.

  Assume, therefore, that $t^\mor_\SS(A)$ is an integer. We already know from Proposition~\ref{rdbas.prop.smaller}
  that
  $$
    t^\mor_\CC(A) \le \min_{\SS, \; S} \; T^\mor_\SS(A, S),
  $$
  while from Proposition~\ref{rdbas.prop.sml-big} we know that the minimum is attained
  for $\SS = \Sub(\CC)$ and $S = \CC$.
\end{proof}

It is important to stress that the proof of Theorem~\ref{rdbas.thm.minT} relies on
a ``synthetic example'' to show that the minimum is attained. However, in case of chains ($=$~linearly ordered sets)
we don't need a synthetic example. From the finite and the infinite version of Ramsey's theorem we have that
$t_{\Ch_\fin}(n) = 1$ and $T_\Ch(n, \omega) = 1$ for every finite chain $n$, where $\Ch_\fin$ is the category of
finite chains together with embeddings, and $\Ch$ is the category of at most countably infinite chains
together with embeddings.
It would be of interest to identify examples of this phenomenon in categories of other types of first-order structures. For example,
is there a countable graph $U$ such that $t_{\Gra_\fin}(G) = T_\Gra(G, U)$ for every finite graph $G$, where
$\Gra_\fin$ is the category of finite graphs together with embeddings, and
$\Gra$ is the category at most countably infinite graphs together with embeddings?

\section{Monotonicity of Ramsey degrees}
\label{rdbas.sec.mon}

In this section we are going to review a few facts about the monotonicity of Ramsey degrees which have been considered
in \cite{masul-bigrd,Zucker-1,Zucker-2} but follow easily from the above considerations. We are going to show that in some cases the big Ramsey degrees are
monotonous in the first argument. This immediately implies the monotonicity of the small Ramsey degrees via
Theorem~\ref{rdbas.thm.minT}. Finally, we present a sufficient condition for the big Ramsey degrees to be monotonous
in the second argument.

Let $\CC$ be a category and $A, B, S \in \Ob(\CC)$. Then
$S$ \emph{is weakly homogeneous for $(A, B)$}, if there exist
$f \in \hom(A, B)$ and $g \in \hom(S, S)$ such that $g \cdot \hom(A, S) \subseteq \hom(B, S) \cdot f$.
\begin{center}
  \begin{tikzcd}
    B \arrow[rr, bend left=20] \arrow[rr, bend right=20] \arrow[ddrr, phantom, "\supseteq" description] & & S \\
    & \\
    A \arrow[uu, "f"] \arrow[rr, bend left=20] \arrow[rr, bend right=20] & & S \arrow[uu, "g"']
  \end{tikzcd}
\end{center}

Note that this is a weak form of weak homogeneity. An object $S \in \Ob(\CC)$ is \emph{weakly homogeneous}
for a full subcategory $\DD$ of $\CC$ if for any $A, B \in \Ob(\DD)$, any $f \in \hom_\DD(A, B)$ and any
$g \in \hom_\CC(A, S)$ there is an $h \in \hom_\CC(B, S)$ such that
\begin{center}
  \begin{tikzcd}
    B \arrow[dr, "h"] \\
    A \arrow[u, "f"] \arrow[r, "g"'] & S
  \end{tikzcd}
\end{center}
Clearly, if $S$ is weakly homogeneous for $\DD$ then $S$ is weakly homogeneous for every pair $(A, B)$ where
$A, B \in \Ob(\DD)$ such that $A \toCC{\DD} B$ because $\id_S \cdot \hom_\CC(A, S) = \hom_\CC(B, S) \cdot f$
for any $f \in \hom_\DD(A, B)$.

\begin{THM}\label{rdbas.thm.mon-bigdeg}
  Let $\CC$ be a locally small category whose morphisms are mono and let $A, B \in \Ob(\CC)$ be such that $A \toCC{\CC} B$. Then
  $T^{\mor}_\CC(A, S) \le T^{\mor}_\CC(B, S)$ for every $S \in \Ob(\CC)$ which is weakly homogeneous for $(A, B)$.
\end{THM}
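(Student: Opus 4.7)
The plan is to reduce $A$-colorings on $\hom(A,S)$ to $B$-colorings on $\hom(B,S)$ by pulling back along the witness $f : A \to B$, then push the witness morphism back through the endomorphism $g : S \to S$ that weak homogeneity supplies. So the monotonicity goes in the expected direction because $f$ lets us translate information about maps out of $B$ into information about maps out of $A$.

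In detail, I would first dispose of the trivial case $T^\mor_\CC(B, S) = \infty$, and otherwise set $n = T^\mor_\CC(B, S) \in \NN$ and pick witnesses $f \in \hom(A, B)$ and $g \in \hom(S, S)$ with
$$
g \cdot \hom(A, S) \subseteq \hom(B, S) \cdot f.
$$
Given $k \ge 2$ and an arbitrary coloring $\chi : \hom(A, S) \to k$, I define the pullback coloring
$$
\chi' : \hom(B, S) \to k, \qquad \chi'(h) = \chi(h \cdot f),
$$
which is well-defined since $f : A \to B$ and $h : B \to S$ compose to an element of $\hom(A, S)$. By definition of $T^\mor_\CC(B, S) = n$, there exists $w \in \hom(S, S)$ such that $|\chi'(w \cdot \hom(B, S))| \le n$.

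The candidate witness for $A$ is then $w \cdot g \in \hom(S, S)$. For any $e \in \hom(A, S)$, weak homogeneity yields some $h \in \hom(B, S)$ with $g \cdot e = h \cdot f$, whence
$$
\chi(w \cdot g \cdot e) \;=\; \chi\bigl((w \cdot h) \cdot f\bigr) \;=\; \chi'(w \cdot h).
$$
Running $e$ over $\hom(A, S)$ therefore gives $\chi\bigl((w \cdot g) \cdot \hom(A, S)\bigr) \subseteq \chi'\bigl(w \cdot \hom(B, S)\bigr)$, so the left-hand side uses at most $n$ colors. Since $\chi$ and $k$ were arbitrary, $T^\mor_\CC(A, S) \le n$.

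There is no real obstacle here; the content is a diagram chase and the mono hypothesis is not even needed for the inequality itself (only to harmonize the statement with the conventions of Section~\ref{rdbas.sec.ramsey-degs}). The only subtle point worth flagging is that weak homogeneity is used in its full strength: we must rewrite every $g \cdot e$ as some $h \cdot f$ to be able to apply $\chi'$, which is precisely what the inclusion $g \cdot \hom(A, S) \subseteq \hom(B, S) \cdot f$ guarantees.
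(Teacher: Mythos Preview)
Your proof is correct and follows essentially the same approach as the paper's: both define the pullback coloring $\chi'(h) = \chi(h \cdot f)$, obtain $w$ from $T^\mor_\CC(B,S) = n$, and use the inclusion $g \cdot \hom(A,S) \subseteq \hom(B,S) \cdot f$ to conclude that $w \cdot g$ witnesses the bound for $A$. Your write-up is slightly more explicit (you handle the $\infty$ case and unpack the element-level chase), but the argument is the same.
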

\begin{proof}
  Take any $S$ which is weakly homogeneous for $(A, B)$. Then there exist
  $f \in \hom(A, B)$ and $g \in \hom(S, S)$ such that $g \cdot \hom(A, S) \subseteq \hom(B, S) \cdot f$.
  Let $T^\mor_\CC(B, S) = n \in \NN$.
  
  Take any $k \ge 2$ and let $\chi : \hom(A, S) \to k$ be a coloring.
  Define $\chi' : \hom(B, S) \to k$ by $\chi'(h) = \chi(h \cdot f)$.
  Then there is a $w \in \hom(S, S)$ such that
  $|\chi'(w \cdot \hom(B, S))| \le n$. The definition of $\chi'$ then yields
  $|\chi(w \cdot \hom(B, S) \cdot f)| \le n$. Therefore,
  $|\chi(w \cdot g \cdot \hom(A, S))| \le n$ because $g \cdot \hom(A, S) \subseteq \hom(B, S) \cdot f$.
\end{proof}

\begin{LEM}\label{rdbas.lem.slide}
  Let $\CC$ be a small category with amalgamation and $A, B \in \Ob(\CC)$. If $A \toCC{\CC} B$
  then $\CC$ (as an object of $\Sub(\CC)$) is weakly homogeneous for $(A, B)$ in $\Sub(\CC)$.
\end{LEM}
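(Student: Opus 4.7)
The plan is to exhibit explicit witnesses $f$ and $g$ for the definition of weak homogeneity. Since $A \toCC{\CC} B$, I would fix any $f \in \hom_\CC(A, B)$; the real work is the construction of $g = (g_D)_{D \in \Ob(\CC)} \in \hom_{\Sub(\CC)}(\CC, \CC)$. Unfolding the definition of $\Sub(\CC)$ and of $\hom_{\Sub(\CC)}(A, \CC) = \bigsqcup_D \hom_\CC(A, D)$, the required containment $g \cdot \hom_{\Sub(\CC)}(A, \CC) \subseteq \hom_{\Sub(\CC)}(B, \CC) \cdot f$ is exactly the following pointwise condition: for every $D \in \Ob(\CC)$ and every $h \in \hom_\CC(A, D)$, the $\CC$-morphism $g_D \cdot h : A \to \cod(g_D)$ factors as $h'' \cdot f$ for some $h'' \in \hom_\CC(B, \cod(g_D))$.

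To build $g_D$ for a fixed $D$, I would iterate amalgamation. If $\hom_\CC(A, D) = \varnothing$ set $g_D = \id_D$; otherwise enumerate $\hom_\CC(A, D) = \{h_1, \ldots, h_n\}$ (using the standing finiteness of hom-sets from Theorem~\ref{rdbas.thm.minT}). Put $D_0 = D$ and $\phi_0 = \id_D$, and inductively amalgamate the pair $(\phi_{i-1} \cdot h_i, f)$ over $A$ to produce $D_i$, $\psi_i \in \hom_\CC(D_{i-1}, D_i)$, and $\eta_i \in \hom_\CC(B, D_i)$ with $\psi_i \cdot \phi_{i-1} \cdot h_i = \eta_i \cdot f$, then set $\phi_i = \psi_i \cdot \phi_{i-1}$. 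Finally define $g_D = \phi_n : D \to D_n$.

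A short induction on $i$ shows that $\phi_i \cdot h_j$ factors through $f$ for every $j \le i$: the case $j = i$ is the amalgamation identity just imposed, while for $j < i$ the inductive factorization $\phi_{i-1} \cdot h_j = \rho_j \cdot f$ gives $\phi_i \cdot h_j = (\psi_i \cdot \rho_j) \cdot f$. After $n$ steps, every $g_D \cdot h$ with $h \in \hom_\CC(A, D)$ factors through $f$, exactly as required.

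The main obstacle is not any single amalgamation step but the coordination: one must check that the family $(g_D)_D$ is a legitimate morphism in $\Sub(\CC)$, which is immediate since morphisms $\CC \to \CC$ in $\Sub(\CC)$ are unrestricted families with $\dom(g_D) = D$ and $\cod(g_D) \in \Ob(\CC)$; and that the per-$D$ factorization assembles into the set-theoretic containment that defines weak homogeneity. The only subtlety is that the iterated amalgamation terminates precisely under the finiteness of $\hom_\CC(A, D)$, which is the hypothesis in force wherever this lemma is invoked.
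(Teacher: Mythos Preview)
Your approach is the same as the paper's: fix $f$ and build $g = (g_D)_D$ object-by-object via amalgamation over the span $(h, f)$. The paper, however, performs only a \emph{single} amalgamation per object $D$ (``take any $h \in \hom_\CC(A, D)$ \ldots\ put $g_D = f'$''), which literally only guarantees that the one chosen $h$ factors through $f$ after composing with $g_D$; your iterated amalgamation over an enumeration $h_1, \ldots, h_n$ is the honest way to secure the full containment $g_D \cdot \hom_\CC(A, D) \subseteq \hom_\CC(B, \cod(g_D)) \cdot f$. In that sense your argument is a repair of the paper's sketch rather than a different route.

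The one point to flag is that your iteration terminates only because you assume $\hom_\CC(A, D)$ is finite, a hypothesis the lemma as stated does not carry (nor does Theorem~\ref{rdbas.thm.mon-smldeg}, where the lemma is invoked). You are right that finiteness of hom-sets is the ambient assumption in Theorem~\ref{rdbas.thm.minT} and in all concrete applications, so nothing downstream is harmed; but strictly speaking you are proving a slightly weaker statement than the one written. If you want to match the lemma verbatim you would need either a transfinite iteration together with directed colimits in $\CC$, or simply to add the finite-hom-set hypothesis to the lemma---which is arguably what the paper should have done.
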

\begin{proof}
  Fix arbitrary $f \in \hom_\CC(A, B)$. Then $f \in \hom_{\Sub(\CC)}(A, B)$.
  Taking $\CC$ for $S$ in the definition of being weakly homogeneous for a pair,
  we shall now construct a morphism $(g_C)_{C \in \Ob(\CC)} : \CC \to \CC$ by amalgamation.
  Take any $C \in \Ob(\CC)$. If $\hom_\CC(A, C) = \0$ put $g_C = \id_C$. Otherwise,
  take any $h \in \hom_\CC(A, C)$. Then there is a $C' \in \Ob(\CC)$ and
  morphisms $h' \in \hom_\CC(B, C')$ and $f' \in \hom_\CC(C, C')$ such that
  \begin{center}
    \begin{tikzcd}
      B \arrow[r, "h'"] & C' \\
      A \arrow[u, "f"]  \arrow[r, "h"'] & C \arrow[u, "f'"']
    \end{tikzcd}
  \end{center}
  Put $g_C = f'$. Now it is easy to see that
  $$
    (g_C)_{C \in \Ob(\CC)} \cdot \hom_{\Sub(\CC)}(A, \CC) \subseteq \hom_{\Sub(\CC)}(B, \CC) \cdot f
  $$
  having in mind that
  $$
    \hom_{\Sub(\CC)}(A, \CC) = \bigcup_{C \in \Ob(\CC)} \hom_\CC(A, C),
  $$
  and the same for $\hom_{\Sub(\CC)}(B, \CC)$.
\end{proof}

\begin{THM}\label{rdbas.thm.mon-smldeg}
  Let $\CC$ be a directed small category with amalgamation whose morphisms are mono.
  If $A \toCC{\CC} B$ then $t^{\mor}_\CC(A) \le t^{\mor}_\CC(B)$,
  for all $A, B \in \Ob(\CC)$. 
\end{THM}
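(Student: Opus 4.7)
The plan is to reduce this monotonicity statement about small Ramsey degrees in $\CC$ to the already-proved monotonicity statement about big Ramsey degrees in $\Sub(\CC)$ (Theorem~\ref{rdbas.thm.mon-bigdeg}), via the bridge provided by Proposition~\ref{rdbas.prop.sml-big}.

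First, since $\CC$ is a directed category whose morphisms are mono, Proposition~\ref{rdbas.prop.sml-big} applies to both $A$ and $B$, giving
\[
  t^{\mor}_\CC(A) = T^{\mor}_{\Sub(\CC)}(A, \CC)
  \quad\text{and}\quad
  t^{\mor}_\CC(B) = T^{\mor}_{\Sub(\CC)}(B, \CC).
\]
So it suffices to prove that $T^{\mor}_{\Sub(\CC)}(A, \CC) \le T^{\mor}_{\Sub(\CC)}(B, \CC)$.

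Next, by assumption $\CC$ has amalgamation and $A \toCC{\CC} B$, so Lemma~\ref{rdbas.lem.slide} tells us that $\CC$, viewed as an object of $\Sub(\CC)$, is weakly homogeneous for the pair $(A, B)$ in $\Sub(\CC)$. This is precisely the hypothesis required to invoke Theorem~\ref{rdbas.thm.mon-bigdeg} in the category $\Sub(\CC)$ with $S = \CC$, yielding the desired inequality
\[
  T^{\mor}_{\Sub(\CC)}(A, \CC) \le T^{\mor}_{\Sub(\CC)}(B, \CC).
\]
Chaining this with the two identities above completes the argument.

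The only mildly delicate point is verifying that Theorem~\ref{rdbas.thm.mon-bigdeg} is legitimately applicable to $\Sub(\CC)$; its statement requires the ambient category's morphisms to be mono, and one needs to check that this property is inherited by $\Sub(\CC)$ from $\CC$ (or, alternatively, observe that its proof only uses the weak homogeneity hypothesis and the definition of $T^\mor$, neither of which needs monomorphisms). Either way, this is a routine verification; the real content of the theorem is packaged entirely into Proposition~\ref{rdbas.prop.sml-big}, Lemma~\ref{rdbas.lem.slide} and Theorem~\ref{rdbas.thm.mon-bigdeg}, and the punchline is just their concatenation.
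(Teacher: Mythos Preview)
Your proposal is correct and follows exactly the same route as the paper: reduce via Proposition~\ref{rdbas.prop.sml-big} to big Ramsey degrees in $\Sub(\CC)$, use Lemma~\ref{rdbas.lem.slide} to get weak homogeneity of $\CC$ for $(A,B)$, and finish with Theorem~\ref{rdbas.thm.mon-bigdeg}. Your parenthetical remark about the mono hypothesis in Theorem~\ref{rdbas.thm.mon-bigdeg} is apt---indeed its proof never uses it---and the paper silently glosses over this point.
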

\begin{proof}
  (cf.\ \cite{Zucker-1})
  By Proposition~\ref{rdbas.prop.sml-big} it suffices to show that
  $$
    T^{\mor}_{\Sub(\CC)}(A, \CC) \le T^{\mor}_{\Sub(\CC)}(B, \CC).
  $$
  From Lemma~\ref{rdbas.lem.slide} we know that
  $\CC$ is weakly homogeneous for $(A, B)$ in $\Sub(\CC)$.
  The claim now follows from Theorem~\ref{rdbas.thm.mon-bigdeg}.
\end{proof}

Therefore, small Ramsey degrees are monotonous:
$A \toCC{\CC} B$ implies $t^\mor_\CC(A) \le t^\mor_\CC(B)$.
We have also seen (Theorem~\ref{rdbas.thm.mon-bigdeg}) that under some reasonable assumptions
big Ramsey degrees are monotonous in the first argument:
if $A \toCC{\CC} B$ and $S$ is weakly homogeneous for $(A, B)$ then $T^\mor_\CC(A, S) \le T^\mor_\CC(B, S)$. As the
following example shows the big Ramsey degrees are not necessarily monotonous in the second argument.

\begin{EX}
  Recall that a chain is a structure $(A, \Boxed<)$ where $<$ is a linear order on~$A$.
  For the sake of this example let $n$ denote the finite chain $0 < 1 < \ldots < n-1$, let $\QQ$ be the
  chain of the rationals with respect to the usual ordering, and let $\omega$ be the first infinite ordinal.
  The infinite version of Ramsey's theorem actually claims that $T(n, \omega) = 1$ for all $n \ge 1$.
  In an attempt to generalize Ramsey's theorem to other chains Galvin observed in
  \cite{galvin1,galvin2} that $T(2, \QQ) = 2$.
  This observation was later generalized by Devlin in \cite{devlin} who showed that $T(n, \QQ) < \infty$
  for all $n \ge 2$, and was actually able to compute the exact values of $T(n, \QQ)$.

  In \cite{masul-sobot} the authors made another step towards computing the big Ramsey degrees in various ordinals.
  For example, they were able to show that $T(n, \omega \cdot m) = m^n$, while
  $T(n, \omega^\omega) = \infty$ for all $n \ge 2$ (where $\omega^\omega$ in this context denotes the
  ordinal exponentiation; hence $\omega^\omega$ is a countable chain).
  
  Fix an $n \in \NN$ and take $m \in \NN$ so that $m^n > T(n, \QQ)$. Then $\omega \cdot m$ embeds into $\QQ$ but
  $T(n, \omega \cdot m) > T(n, \QQ)$. Moreover, for any $n \ge 2$ we have that $\omega^\omega$ embeds into $\QQ$ but
  $T(n, \omega^\omega) = \infty > T(n, \QQ)$.
\end{EX}

Nevertheless, under certain assumptions the big Ramsey degrees are monotonous in the second argument as well.
One such situation was identified in~\cite{masul-bigrd} as follows and we shall get back to it in
Section~\ref{rdbas.sec.appls}.

Consider an acyclic, bipartite, not necessarily finite digraph where all the arrows go from one class of vertices into the other
and the out-degree of all the vertices in the first class is~2:
\begin{center}
  \begin{tikzcd}
    \bullet & \bullet & \bullet & \dots \\
    \bullet \arrow[u] \arrow[ur] & \bullet \arrow[ur] \arrow[ul] & \bullet \arrow[u] \arrow[ur] & \dots 
  \end{tikzcd}
\end{center}
\noindent
Such a digraph will be referred to as a \emph{binary digraph}.
Let $\CC$ be a category. For $A, B \in \Ob(\CC)$, an \emph{$(A, B)$-diagram} in a category $\CC$ is a functor
$F : \Delta \to \CC$ where $\Delta$ is a binary digraph,
$F$ takes the bottom row of $\Delta$ onto $A$, and takes the top row of $\Delta$ onto $B$, Fig.~\ref{nrt.fig.2}.

\begin{figure}
  \centering
  \begin{tikzcd}
    \bullet & \bullet & \bullet
    & & B & B & B
  \\
        \bullet \arrow[u] \arrow[ur, bend right]                & \bullet \arrow[ur, bend right] \arrow[ul, bend left]                        & \bullet \arrow[ul, bend left] \arrow[u]
    & & A \arrow[u, "f_1"] \ar[ur, "f_2", bend right, near end] & A \ar[ur, "f_4", bend right, near end] \ar[ul, "f_3"', bend left, near end] & A \ar[ul, "f_5"', bend left, near end] \ar[u, "f_6"']
  \\
    & \Delta \arrow[rrrr, "F"]  & & & & \CC  
  \end{tikzcd}
  \caption{An $(A, B)$-diagram in $\CC$}
  \label{nrt.fig.2}
\end{figure}

\begin{figure}
  \centering
  \begin{tikzcd}[execute at end picture={
            \draw (-0.5,-1.5) rectangle (5.75,1);
        }]
    & & & & & C & & \CC
  \\
    & & & & &  &
  \\
    \bullet & \bullet & \bullet
    & & B \arrow[uur] \arrow[rrr, dotted, bend left=14] & B \arrow[uu] \arrow[rr, dotted, bend left=10] & B \arrow[uul] \arrow[r, dotted] & D
  \\
    \bullet \arrow[u] \arrow[ur] & \bullet \arrow[ur] \arrow[ul] & 
    & & A \arrow[u] \arrow[ur] & A \arrow[ur] \arrow[ul] & & \BB
  \\
    & \Delta \arrow[rrrr, "F"]  & & & & \BB
  \end{tikzcd}
  \caption{The setup of Theorem~\ref{bigrd.thm.1}}
  \label{bigrd.fig.subcat}
\end{figure}

\begin{THM}\label{bigrd.thm.1}\cite{masul-bigrd}
  Let $\CC$ be a locally small category whose morphisms are mono and let $\BB$ be a (not necessarily full) subcategory of $\CC$.
  Let $B \in \Ob(\BB)$ be universal for $\BB$ and let $C \in \Ob(\CC)$ be universal for $\CC$.
  Take any $A \in \BB$ and assume that for every $(A, B)$-diagram $F : \Delta \to \BB$ the following holds:
  if $F$ (which is an $(A, B)$-diagram in $\CC$ as well)
  has a commuting cocone in $\CC$ whose tip is $C$, then $F$ has a commuting cocone
  in~$\BB$, Fig.~\ref{bigrd.fig.subcat}. Then $T_\BB(A, B) \le T_\CC(A, C)$.
\end{THM}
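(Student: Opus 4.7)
Fix $n = T_\CC(A, C)$; if $n = \infty$ the claim is vacuous, so assume $n$ is finite. The plan is a three-step transfer: lift a $\BB$-coloring of $\binom{B}{A}_\BB$ to a $\CC$-coloring of $\binom{C}{A}_\CC$, apply the bound provided by $T_\CC(A, C)$, and pull the resulting endomorphism of $C$ back to an endomorphism of $B$ lying in $\BB$, using the cocone hypothesis.

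Since $C$ is universal for $\CC$ and $B \in \Ob(\CC)$, I would fix some $\iota \in \hom_\CC(B, C)$; monicity of $\iota$ (all morphisms in $\CC$ being mono) makes postcomposition with $\iota$ injective on $\hom_\BB(A, B)$. Given a $k$-coloring $\chi : \binom{B}{A}_\BB \to k$, the next step is to define an auxiliary coloring $\hat\chi : \binom{C}{A}_\CC \to k \cup \{\star\}$ by copying the $\chi$-color across $\iota$ on those $\CC$-classes containing a representative of the form $\iota \cdot f$ with $f \in \hom_\BB(A, B)$, and sending every other class to the fresh symbol $\star$. If $\Aut_\BB(A)$ is a proper subgroup of $\Aut_\CC(A)$, a subset of $\calP(k) \cup \{\star\}$ may be needed as the target for $\hat\chi$ to be well-defined. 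Applying $T_\CC(A, C) = n$ to $\hat\chi$ then produces $w^* \in \hom_\CC(C, C)$ with $|\hat\chi(w^* \cdot \binom{C}{A}_\CC)| \le n$.

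The pull-back step exploits the cocone hypothesis. I would assemble the $(A, B)$-diagram $F : \Delta \to \BB$ with two top vertices $u_1, u_2$ both mapped to $B$ and, for each pair $(f, g) \in \hom_\BB(A, B)^2$ satisfying $w^* \cdot \iota \cdot f = \iota \cdot g$ in $\CC$, a bottom vertex whose two outgoing arrows are sent respectively to $f$ and $g$. The pair $(p_{u_1}, p_{u_2}) = (w^* \cdot \iota, \iota)$ is a commuting cocone for $F$ in $\CC$ with tip $C$; the hypothesis then produces a commuting cocone in $\BB$, and composing with the $\BB$-morphism from its tip to $B$ supplied by universality of $B$ for $\BB$ yields $w, v \in \hom_\BB(B, B)$ satisfying $w \cdot f = v \cdot g$ for every admissible pair.

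The main obstacle is to verify $|\chi(w \cdot \binom{B}{A}_\BB)| \le n$. For every $f$ whose $\iota$-postcomposition factors through $\iota$, the identity $w \cdot f = v \cdot g$ together with the definition of $\hat\chi$ relates $\chi(w \cdot [f]_\BB)$ to $\hat\chi(w^* \cdot [\iota \cdot f]_\CC)$, carrying the bound across; the $f$'s falling outside the image contribute $\star$, counted within the budget of $n$. The delicate point is that $v$ is not guaranteed to act as the identity on $\sim_A$-classes, so $\chi(v \cdot [g]_\BB)$ need not equal $\chi([g]_\BB)$. Overcoming this likely requires augmenting $\Delta$ with additional bottom vertices that pin down $v$'s action, or reformulating $\hat\chi$ so that it naturally accommodates $v$ --- this interaction between the geometric flexibility of $F$ and the combinatorial tightness of the Ramsey bound is the crux of the argument.
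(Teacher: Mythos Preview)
First, a remark on the comparison task: this paper does not contain its own proof of the theorem. The result is quoted from \cite{masul-bigrd} (see the sentence ``One such situation was identified in~\cite{masul-bigrd} as follows'' immediately preceding the statement), and no proof is reproduced here. So there is nothing in the present paper to compare your argument against line by line. I will therefore assess the proposal on its own merits.

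Your overall strategy---lift the $\BB$-coloring along a fixed $\iota : B \to C$, apply $T_\CC(A,C)=n$ to obtain $w^*$, then invoke the cocone hypothesis to manufacture a $\BB$-endomorphism $w$ of $B$---is the natural one and is indeed the shape of the argument. However, the specific two--top--vertex diagram you build is too small, and this produces two genuine gaps, only the first of which you flag.

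\emph{Gap you acknowledge.} From the $\BB$-cocone you obtain $w,v\in\hom_\BB(B,B)$ with $w\cdot f=v\cdot g$ whenever $w^*\cdot\iota\cdot f=\iota\cdot g$. This yields $\chi([w\cdot f]_\BB)=\chi([v\cdot g]_\BB)$, whereas $\hat\chi([w^*\cdot\iota\cdot f]_\CC)=\hat\chi([\iota\cdot g]_\CC)$ records $\chi([g]_\BB)$. There is no reason for $\chi([v\cdot g]_\BB)$ to equal $\chi([g]_\BB)$, and your suggestion to ``augment $\Delta$'' or ``reformulate $\hat\chi$'' is left as a promissory note; as written, the bound does not transfer.

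\emph{Gap you do not acknowledge.} Your diagram contains a bottom vertex for $f$ only when there \emph{exists} $g\in\hom_\BB(A,B)$ with $w^*\cdot\iota\cdot f=\iota\cdot g$. For an $f$ with no such $g$, the diagram imposes no constraint on $w\cdot f$ at all. You write that such $f$ ``contribute $\star$, counted within the budget of $n$'', but that sentence is about $\hat\chi$, not about $\chi$: indeed $\hat\chi([w^*\cdot\iota\cdot f]_\CC)=\star$, which is a single colour on the $\CC$ side, yet $\chi([w\cdot f]_\BB)$ lives in $k$ and is completely unconstrained by your construction. Different such $f$ may therefore produce arbitrarily many distinct $\chi$-colours, destroying the bound.

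Both gaps have the same root cause: a diagram with only two top vertices, carrying cocone legs $w^*\cdot\iota$ and $\iota$, only records the equalities $w^*\cdot\iota\cdot f=\iota\cdot g$; it forgets everything about morphisms $A\to C$ that do not factor through $\iota$, and it gives you an extraneous $v$ you cannot control. The remedy is to use a much larger binary digraph---with a top vertex for every member of a suitable family of morphisms $B\to C$ in $\CC$ and a bottom vertex for every coincidence $e\cdot f=e'\cdot f'$ among them---so that the $\BB$-cocone produces a matching family of $\BB$-endomorphisms of $B$. One then reads off the single desired $w$ from this family, and the extra relations recorded in the diagram are exactly what is needed to eliminate the ``uncontrolled'' $f$'s and the spurious $v$.
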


\section{An additive property of big Ramsey degrees}
\label{rdbas.sec.additivity-big}

In this section we refine a result from~\cite{dasilvabarbosa} about the additivity of big Ramsey degrees.
We prove that big Ramsey degrees for morphisms as well as big Ramsey degrees for objects posses an additive property.
Moreover, the requirement that the expansion be reasonable may be omitted.
This will have significant consequences in Section~\ref{rdbas.sec.appls}.

\begin{THM}\label{sbrd.thm.big1}
  Let $\CC$ and $\CC^*$ be locally small categories.
  Let $U : \CC^* \to \CC$ be an expansion with restrictions and assume that all the morphisms in $\CC$
  are mono. Let $S^* \in \Ob(\CC^*)$ be universal for $\CC^*$ and let $S = U(S^*)$.
  Then $S$ is (clearly) universal for $\CC$ and
  $$
    T^{\mor}_{\CC}(A, S) \le \sum_{A^* \in U^{-1}(A)} T^{\mor}_{\CC^*}(A^*, S^*).
  $$
  Consequently, if $U^{-1}(A)$ is finite and
  $T^{\mor}_{\CC^*}(A^*, S^*) < \infty$ for all $A^* \in U^{-1}(A)$ then $T^{\mor}_\CC(A, S) < \infty$.
\end{THM}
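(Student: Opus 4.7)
The plan is as follows. First, handle the trivial cases: if $U^{-1}(A)$ is infinite, or if $T^{\mor}_{\CC^*}(A^*, S^*) = \infty$ for some $A^* \in U^{-1}(A)$, then the right-hand side equals $\infty$ and nothing needs to be shown. So assume $U^{-1}(A) = \{A^*_1, \ldots, A^*_m\}$ is finite and each $t_i := T^{\mor}_{\CC^*}(A^*_i, S^*)$ is a positive integer, and set $N = t_1 + \cdots + t_m$. Because $U$ has restrictions, Lemma~\ref{sbrd.lem.disj-union}(a) gives the decomposition
$$
  \hom_\CC(A, S) = \bigcup_{i=1}^m \hom_{\CC^*}(A^*_i, S^*).
$$
The goal is then to show that every $k$-coloring $\chi : \hom_\CC(A, S) \to k$ admits some $w \in \hom_\CC(S, S)$ with $|\chi(w \cdot \hom_\CC(A, S))| \le N$.

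The construction of $w$ proceeds iteratively as a composite $w = w_1 \cdot w_2 \cdots w_m$, where each $w_j : S^* \to S^*$ is a $\CC^*$-morphism (hence, through $U$, also a morphism $S \to S$ in $\CC$). At step $j$ I would consider the auxiliary coloring $\chi_{j-1} : \hom_{\CC^*}(A^*_j, S^*) \to k$ defined by $\chi_{j-1}(g) = \chi((w_1 \cdots w_{j-1}) \cdot g)$, and invoke $T^{\mor}_{\CC^*}(A^*_j, S^*) = t_j$ to extract $w_j$ satisfying
$$
  |\chi((w_1 \cdots w_{j-1}) \cdot w_j \cdot \hom_{\CC^*}(A^*_j, S^*))| \le t_j.
$$
After $m$ steps I take $w = w_1 \cdots w_m$. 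The final estimate is then obtained by splitting $\hom_\CC(A, S)$ along the decomposition above and summing the per-strand bounds.

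The step I expect to be the main obstacle is verifying that the color-count bound obtained at step $i$ is not destroyed by the subsequent left-multiplications by $w_{i+1}, \ldots, w_m$. The key observation, which makes the scheme work, is that each $w_j$ is a morphism \emph{in $\CC^*$} from $S^*$ to $S^*$, so composition on the left sends $\hom_{\CC^*}(A^*_i, S^*)$ into itself. Iterating, one gets $w \cdot \hom_{\CC^*}(A^*_i, S^*) \subseteq (w_1 \cdots w_i) \cdot \hom_{\CC^*}(A^*_i, S^*)$ for every $i$, and by construction the right-hand side carries at most $t_i$ colors under $\chi$. Summing over $i$ and using subadditivity of cardinality on a union yields $|\chi(w \cdot \hom_\CC(A, S))| \le N$, which is the required inequality $T^{\mor}_{\CC}(A, S) \le N$. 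The ``consequently'' clause falls out immediately, since finiteness of $U^{-1}(A)$ together with finiteness of each $T^{\mor}_{\CC^*}(A^*, S^*)$ makes $N$ a finite positive integer.
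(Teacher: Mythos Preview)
Your proposal is correct and follows essentially the same approach as the paper's proof: decompose $\hom_\CC(A,S)$ into the strata $\hom_{\CC^*}(A^*_i,S^*)$ via Lemma~\ref{sbrd.lem.disj-union}, iteratively choose $w_i \in \hom_{\CC^*}(S^*,S^*)$ using the big Ramsey degree of $A^*_i$ against the coloring precomposed with the already-constructed morphisms, and then exploit that each $w_j$ preserves every stratum to ensure the bound achieved at step $i$ survives the remaining compositions. The only difference is cosmetic: the paper runs the induction from $i=n$ down to $i=1$ and takes $w = w_n \cdots w_1$, whereas you run it from $j=1$ up to $j=m$ and take $w = w_1 \cdots w_m$.
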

\begin{proof}
  If there is an $A^* \in U^{-1}(A)$ with $T^{\mor}_{\CC^*}(A^*, S^*) = \infty$ then the inequality is trivially satisfied. The same
  holds if $U^{-1}(A)$ is infinite. Assume, therefore, that $U^{-1}(A) = \{A^*_1, A^*_2, \ldots, A^*_n\}$ and let
  $T^{\mor}_{\CC^*}(A^*_i, S^*) = T_i \in \NN$ for each~$i$.
  
  For an arbitrary $k \ge 2$ let us show that
  $$
    S \overset{\mor}\longrightarrow (S)^A_{k, T_1 + \ldots + T_n}.
  $$
  Take any $\chi : \hom_\CC(A, S) \to k$. By Lemma~\ref{sbrd.lem.disj-union} we know that
  $$
    \hom_\CC(A, S) = \bigcup_{i=1}^n \hom_{\CC^*}(A^*_i, S^*),
  $$
  so we can restrict $\chi$ to each $\hom_{\CC^*}(A^*_i, S^*)$ to get $n$ colorings
  $$
    \chi_i : \hom_{\CC^*}(A^*_i, S^*) \to k, \quad \chi_i(f) = \chi(f), \quad i \in \{1, \ldots, n\}.
  $$
  Let us construct
  $$
    \chi'_i : \hom_{\CC^*}(A^*_i, S^*) \to k \quad \text{and} \quad w_i : S^* \to S^*, \quad
    i \in \{1, \ldots, n\}
  $$
  inductively as follows. First, put $\chi'_n = \chi_n$. Given $\chi'_i : \hom_{\CC^*}(A^*_i, S^*) \to k$, construct $w_i$
  by the Ramsey property: since $S^* \overset{\mor}\longrightarrow (S^*)^{A^*_i}_{k, T_i}$, there is a $w_i : S^* \to S^*$
  such that
  $$
    |\chi'_i(w_i \cdot \hom_{\CC^*}(A^*_i, S^*))| \le T_i.
  $$
  Finally, given $w_i : S^* \to S^*$ define $\chi'_{i-1} : \hom_{\CC^*}(A^*_{i-1}, S^*) \to k$ by
  $$
    \chi'_{i-1}(f) = \chi_{i-1}(w_n \cdot \ldots \cdot w_i \cdot f).
  $$
  Let us show that
  $$
    |\chi(w_n \cdot \ldots \cdot w_1 \cdot \hom_\CC(A, S))| \le T_1 + \ldots + T_n.
  $$
  By Lemma~\ref{sbrd.lem.disj-union} we know that $\hom_\CC(A, S) = \bigcup_{i=1}^n \hom_{\CC^*}(A^*_i, S^*)$, so
  \begin{align*}
    |\chi(w_n \cdot \ldots \cdot w_1 \cdot \hom_\CC(A, S))|
    & = |\chi(w_n \cdot \ldots \cdot w_1 \cdot \bigcup_{i=1}^n \hom_{\CC^*}(A^*_i, S^*))| \\
    & = |\chi(\bigcup_{i=1}^n w_n \cdot \ldots \cdot w_1 \cdot \hom_{\CC^*}(A^*_i, S^*))| \\
    & = |\bigcup_{i=1}^n \chi(w_n \cdot \ldots \cdot w_1 \cdot \hom_{\CC^*}(A^*_i, S^*))| \\
    & \le \sum_{i=1}^n |\chi(w_n \cdot \ldots \cdot w_1 \cdot \hom_{\CC^*}(A^*_i, S^*))|.
  \end{align*}
  Clearly,
  $
    w_n \cdot \ldots \cdot w_1 \cdot \hom_{\CC^*}(A^*_i, S^*) \subseteq \hom_{\CC^*}(A^*_i, S^*)
  $
  so,
  \begin{align*}
    |\chi(w_n \cdot \ldots \cdot w_1 \cdot \hom_{\CC^*}(A^*_i, S^*))|
    & = |\chi_i(w_n \cdot \ldots \cdot w_1 \cdot \hom_{\CC^*}(A^*_i, S^*))| \\
    & = |\chi'_i(w_i \cdot \ldots \cdot w_1 \cdot \hom_{\CC^*}(A^*_i, S^*))| \\
    & \le |\chi'_i(w_i \cdot \hom_{\CC^*}(A^*_i, S^*))| \le T_i.
  \end{align*}
  This completes the proof.
\end{proof}

Let $U : \CC^* \to \CC$ be an expansion with unique restrictions.
We say that $S^* \in \Ob(\CC^*)$ is \emph{self-similar} if the following holds:
for every $w \in \hom_\CC(S, S)$ there is a morphism $v \in \hom_{\CC^*}(S^*, \restr{S^*}{w})$,
where $S = U(S^*)$.
\begin{center}
  \begin{tikzcd}
    S^* \arrow[r, "\exists v"] & \restr{S^*}{w} \arrow[r, "w"] \arrow[d, mapsto, dashed, "U"'] & S^* \arrow[d, mapsto, dashed, "U"]\\
       & S \arrow[r, "\forall w"] & S
  \end{tikzcd}
\end{center}

\begin{EX}
  Let $\calR = (V, E)$ be the random graph (the unique, up to isomorphism, undirected
  countable ultrahomogeneous graph which is universal for the class of all the finite and countably infinite undirected graphs),
  and let $<_\NN$ and $<_\QQ$ be two linear orders of $V$ such that $(V, \Boxed{<_\NN})$ is isomorphic to~$\NN$ as a chain,
  while $(V, \Boxed{<_\QQ})$ is isomorphic to~$\QQ$ as a chain.
  Let $\calR^* = (V, E, \Boxed{<_\NN})$ and $\calR^{**} = (V, E, \Boxed{<_\QQ})$. It is easy to see that
  $\calR^*$ is self-similar, because for every embedding $w : \calR \hookrightarrow \calR$ the induced substructure
  $\restr{\calR^*}w$ contains a copy of~$\calR^*$. On the other hand, $\calR^{**}$ is not self-similar.
  To see why, note, first, that it is easy to find an embedding $w : \calR \hookrightarrow \calR$ such that
  the induced substructure $\restr{\calR^{**}}w$ is isomorphic to $\calR^*$. Therefore,
  $\restr{\calR^{**}}w$ cannot contain a copy of $\calR^{**}$ because $\QQ$ does not embed into~$\NN$.
\end{EX}

\begin{LEM}\label{sbrd.lem.big-xp-main}
  Let $\CC$ and $\CC^*$ be locally small categories.
  Let $U : \CC^* \to \CC$ be an expansion with unique restrictions and assume that all the morphisms in $\CC$
  are mono. Let $S^* \in \Ob(\CC^*)$ be universal for $\CC^*$ and self-similar, and let $S = U(S^*)$.
  (Then $S$ is (clearly) universal for $\CC$.) Let $A \in \Ob(\CC)$ be arbitrary, let
  $A^*_1, \ldots, A^*_n \in U^{-1}(A)$ be distinct and assume that
  $T_i = T^{\mor}_{\CC^*}(A^*_i, S^*) \in \NN$, $i \in \{1, \ldots, n\}$. Then
  $T^{\mor}_\CC(A, S) \ge \sum_{i=1}^n T_i$.
\end{LEM}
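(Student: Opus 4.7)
The plan is to construct a witness coloring of $\hom_\CC(A, S)$ that forces at least $\sum_{i=1}^n T_i$ colors to appear in $\chi(w \cdot \hom_\CC(A, S))$ for every $w \in \hom_\CC(S, S)$. Two ingredients drive the argument: Lemma~\ref{sbrd.lem.disj-union}(b), which expresses $\hom_\CC(A, S)$ as the \emph{disjoint} union $\bigcup_{A^* \in U^{-1}(A)} \hom_{\CC^*}(A^*, S^*)$, and self-similarity of $S^*$, which promotes an arbitrary $\CC$-endomorphism $w$ of $S$ to a $\CC^*$-endomorphism of $S^*$ by right-multiplying by a suitable $v \in \hom_{\CC^*}(S^*, \restr{S^*}{w})$.

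First, for each $i \in \{1, \ldots, n\}$, since $T_i = T^{\mor}_{\CC^*}(A^*_i, S^*)$ is a positive integer, pick a $k_i \ge 1$ and a witness coloring $\chi_i : \hom_{\CC^*}(A^*_i, S^*) \to k_i$ such that
$$
  |\chi_i(u \cdot \hom_{\CC^*}(A^*_i, S^*))| \ge T_i
  \quad \text{for every } u \in \hom_{\CC^*}(S^*, S^*)
$$
(when $T_i = 1$ a constant coloring suffices). Shift the $n$ color ranges so that they are pairwise disjoint and let $k = k_1 + \ldots + k_n + 1$. Define $\chi : \hom_\CC(A, S) \to k$ by declaring $\chi(f) = \chi_i(f)$ (read in its shifted palette) whenever $f \in \hom_{\CC^*}(A^*_i, S^*)$ for some $i \le n$, and assigning the single remaining ``trash'' color otherwise. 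The disjointness clause of Lemma~\ref{sbrd.lem.disj-union}(b) makes $\chi$ well-defined.

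Now fix any $w \in \hom_\CC(S, S)$. By unique restrictions $w$ is a $\CC^*$-morphism $\restr{S^*}{w} \to S^*$, and by self-similarity there exists $v \in \hom_{\CC^*}(S^*, \restr{S^*}{w})$; hence $u := w \cdot v \in \hom_{\CC^*}(S^*, S^*)$. Applying the witness property of each $\chi_i$ to this common $u$ yields $|\chi_i(u \cdot \hom_{\CC^*}(A^*_i, S^*))| \ge T_i$. Since $u \cdot \hom_{\CC^*}(A^*_i, S^*) \subseteq \hom_{\CC^*}(A^*_i, S^*)$, on this subset $\chi$ coincides (via the palette shift) with $\chi_i$; and since
$$
  u \cdot \hom_{\CC^*}(A^*_i, S^*) \;=\; w \cdot \bigl( v \cdot \hom_{\CC^*}(A^*_i, S^*) \bigr) \;\subseteq\; w \cdot \hom_\CC(A, S),
$$
all of these colors land inside $\chi(w \cdot \hom_\CC(A, S))$. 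Summing over $i$ and using that the $n$ palettes are pairwise disjoint gives $|\chi(w \cdot \hom_\CC(A, S))| \ge \sum_{i=1}^n T_i$, which proves $T^\mor_\CC(A, S) \ge \sum_{i=1}^n T_i$.

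The main subtlety is the use of self-similarity in the third step: one has to observe that a \emph{single} $v$ (depending on $w$ but independent of $i$) converts $w$ into a $\CC^*$-endomorphism $u = w \cdot v$ of $S^*$, so that the $n$ unrelated $\CC^*$-level witness colorings $\chi_1, \ldots, \chi_n$ can be invoked with the same post-composition and their lower bounds then legitimately added together. Without self-similarity a generic $w$ need not factor through any $\CC^*$-endomorphism of $S^*$, and the disjoint-palette bookkeeping would break down.
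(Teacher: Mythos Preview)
Your proof is correct and follows essentially the same approach as the paper: build a palette-shifted coloring from witness colorings $\chi_i$, use self-similarity to turn an arbitrary $w \in \hom_\CC(S,S)$ into a $\CC^*$-endomorphism $u = w \cdot v$ of $S^*$, and then add the disjoint lower bounds. The only cosmetic differences are that you reserve a separate ``trash'' color (the paper reuses color~$0$) and you bound $|\chi(w \cdot \hom_\CC(A,S))|$ directly via $u \cdot \hom_{\CC^*}(A^*_i, S^*) \subseteq w \cdot \hom_\CC(A,S)$, whereas the paper bounds $|\chi(w \cdot v \cdot \hom_\CC(A,S))|$ and leaves the containment $w \cdot v \cdot \hom_\CC(A,S) \subseteq w \cdot \hom_\CC(A,S)$ implicit.
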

\begin{proof}
  Since $T^{\mor}_{\CC^*}(A^*_i, S^*) = T_i$, $i \in \{1, \ldots, n\}$, for every $i \in \{1, \ldots, n\}$
  there exists a $k_i \ge 2$ and a coloring $\chi_i : \hom_{\CC^*}(A^*_i, S^*) \to k_i$ such that for every
  $u \in \hom_{\CC^*}(S^*, S^*)$ we have that $|\chi_i(u \cdot \hom_{\CC^*}(A^*_i, S^*))| \ge T_i$.
  
  Put $k = k_1 + \ldots + k_n$ and construct $\chi : \hom_\CC(A, S) \to k = k_1 + \ldots + k_n$ as follows. Having in mind
  Lemma~\ref{sbrd.lem.disj-union},
  \begin{itemize}
  \item[] for $f \in \hom_{\CC^*}(A^*_1, S^*)$ put $\chi(f) = \chi_1(f)$;
  \item[] for $f \in \hom_{\CC^*}(A^*_2, S^*)$ put $\chi(f) = k_1 + \chi_2(f)$;
  \item[] \ \vdots
  \item[] for $f \in \hom_{\CC^*}(A^*_n, S^*)$ put $\chi(f) = k_1 + \ldots + k_{n-1} + \chi_n(f)$;
  \item[] for all other $f \in \hom_\CC(A, S)$ put $\chi(f) = 0$.
  \end{itemize}

  Let $w \in \hom_\CC(S, S)$ be arbitrary. Because $U : \CC^* \to \CC$ has unique restrictions and because $S^*$ is
  self-similar there is a $v \in \hom_{\CC^*}(S^*, \restr{S^*}{w})$.
  Let us show that $|\chi(w \cdot v \cdot \hom_\CC(A, S))| \ge T_1 + \ldots + T_n$. Note, first, that
  $$
    |\chi(w \cdot v \cdot \hom_\CC(A, S))| \ge |\chi(\bigcup_{i=1}^n w \cdot v \cdot \hom_{\CC^*}(A^*_i, S^*))|.
  $$
  The sets $w \cdot v \cdot \hom_{\CC^*}(A^*_i, S^*)$, $i \in \{1, \ldots, n\}$, are pairwise
  disjoint (since $w \cdot v \cdot \hom_{\CC^*}(A^*_i, S^*) \subseteq \hom_{\CC^*}(A^*_i, S^*)$)
  and, by construction, on each of these sets $\chi$ takes disjoint sets of values (since
  $\hom_{\CC^*}(A^*_1, S^*) \subseteq \{0, \ldots, k_1-1\}$, 
  $\hom_{\CC^*}(A^*_2, S^*) \subseteq \{k_1, \ldots, k_1+k_2-1\}$, and so on). Therefore,
  $$
    |\chi(\bigcup_{i=1}^n w \cdot v \cdot \hom_{\CC^*}(A^*_i, S^*))|
    = \sum_{i=1}^n |\chi(w \cdot v \cdot \hom_{\CC^*}(A^*_i, S^*))|.
  $$
  As another consequence of the construction of $\chi$ we have that
  $$
    |\chi(w \cdot v \cdot \hom_{\CC^*}(A^*_i, S^*))| =
    |\chi_i(w \cdot v \cdot \hom_{\CC^*}(A^*_i, S^*))| \ge T_i
  $$
  for all $i \in \{1, \ldots, n\}$, which concludes the proof of the lemma.
\end{proof}

\begin{THM}\label{sbrd.thm.big2}
  Let $\CC$ and $\CC^*$ be locally small categories.
  Let $U : \CC^* \to \CC$ be an expansion with unique restrictions and assume that all the morphisms in $\CC$
  are mono. Let $S^* \in \Ob(\CC^*)$ be universal for $\CC^*$ and self-similar, and let $S = U(S^*)$.
  Then $S$ is (clearly) universal for $\CC$ and for all $A \in \Ob(\CC)$ we have that
  $$
    T^{\mor}_{\CC}(A, S) = \sum_{A^* \in U^{-1}(A)} T^{\mor}_{\CC^*}(A^*, S^*)
  $$
  
  Consequently, $T^{\mor}_\CC(A, S) < \infty$ if and only if $U^{-1}(A)$ is finite and
  $T^{\mor}_{\CC^*}(A^*, S^*) < \infty$ for all $A^* \in U^{-1}(A)$.
\end{THM}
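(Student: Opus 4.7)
The plan is to sandwich $T^\mor_\CC(A, S)$ between the sum and itself. The upper bound $T^\mor_\CC(A, S) \le \sum_{A^* \in U^{-1}(A)} T^\mor_{\CC^*}(A^*, S^*)$ is immediate from Theorem~\ref{sbrd.thm.big1}, since the assumption of unique restrictions is strictly stronger than having restrictions. The substantive content is the reverse inequality, and when $U^{-1}(A) = \{A^*_1, \ldots, A^*_n\}$ is finite and every $T_i := T^\mor_{\CC^*}(A^*_i, S^*)$ lies in $\NN$, Lemma~\ref{sbrd.lem.big-xp-main} applied to the full list yields $T^\mor_\CC(A, S) \ge \sum_{i=1}^n T_i$ at once, closing the proof in that case.

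The main obstacle is the complementary situation in which the right-hand sum equals $\infty$: here I must show $T^\mor_\CC(A, S) \ge N$ for every $N \in \NN$. The sum is infinite precisely when either $U^{-1}(A)$ is infinite or some summand equals $\infty$ (each summand is at least $1$ because $S^*$ is universal for $\CC^*$, so $\hom_{\CC^*}(A^*, S^*) \ne \0$). In both situations I can pick finitely many distinct $A^*_1, \ldots, A^*_m \in U^{-1}(A)$ together with positive integers $T'_i \le T^\mor_{\CC^*}(A^*_i, S^*)$ such that $\sum_{i=1}^m T'_i \ge N$. The argument of Lemma~\ref{sbrd.lem.big-xp-main} then goes through verbatim with these $T'_i$ in place of $T_i$: for each $i$, select a witness coloring $\chi_i : \hom_{\CC^*}(A^*_i, S^*) \to k_i$ whose image under any $u \in \hom_{\CC^*}(S^*, S^*)$ uses at least $T'_i$ colors, glue these into a single coloring on $\hom_\CC(A, S)$ via the disjoint-union decomposition of Lemma~\ref{sbrd.lem.disj-union}(b) using shifted color ranges, and assign $0$ to any residual morphism. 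For an arbitrary $w \in \hom_\CC(S, S)$, self-similarity of $S^*$ supplies $v : S^* \to \restr{S^*}{w}$ so that $w \cdot v$ sends each $\hom_{\CC^*}(A^*_i, S^*)$ into itself, and the palette-separation then forces $|\chi(w \cdot v \cdot \hom_\CC(A, S))| \ge \sum_{i=1}^m T'_i \ge N$. Letting $N$ range over $\NN$ gives $T^\mor_\CC(A, S) = \infty$, matching the right-hand side.

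The ``consequently'' clause follows automatically from the equality just proved: a sum over $\NN_\infty$ is finite if and only if its index set is finite and every summand lies in $\NN$, which is precisely the stated condition characterising $T^\mor_\CC(A, S) < \infty$. I do not expect any additional difficulty beyond the careful bookkeeping in the infinite case, whose only new idea compared with Lemma~\ref{sbrd.lem.big-xp-main} is the reduction from possibly infinite data to a finite truncation $T'_1, \ldots, T'_m$ with prescribed sum.
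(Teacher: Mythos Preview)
Your proposal is correct and follows the same approach as the paper: the upper bound from Theorem~\ref{sbrd.thm.big1}, the finite case directly from Lemma~\ref{sbrd.lem.big-xp-main}, and the infinite case by re-running the argument of that lemma with suitably chosen witnesses. The only organizational difference is that the paper splits the infinite case into two subcases (infinite fiber with all degrees finite, versus some degree infinite) and treats them separately, whereas your truncation device $T'_i \le T^\mor_{\CC^*}(A^*_i, S^*)$ handles both at once; the underlying mechanism---constructing a palette-separated bad coloring and using self-similarity to obtain $w \cdot v \in \hom_{\CC^*}(S^*, S^*)$---is identical.
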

\begin{proof}
  It suffices to show the following three facts:
  \begin{itemize}
  \item[(1)]
    if $U^{-1}(A) = \{A^*_1, A^*_2, \ldots, A^*_n\}$ is finite and
    $T^{\mor}_{\CC^*}(A^*_i, S^*) < \infty$ for all $i$, then $T^{\mor}_{\CC}(A, S) = \sum_{i=1}^n T^{\mor}_{\CC^*}(A^*_i, S^*)$;
  \item[(2)]
    if $U^{-1}(A)$ is infinite and $T^{\mor}_{\CC^*}(A^*, S^*) < \infty$ for all $A^* \in U^{-1}(A)$
    $T^{\mor}_{\CC}(A, S) = \infty$; and
  \item[(3)]
    if there exists an $A^* \in U^{-1}(A)$ such that $T^{\mor}_{\CC^*}(A^*, S^*) = \infty$ then $T^{\mor}_{\CC}(A, S) = \infty$.
  \end{itemize}

  (1)
  Assume that $U^{-1}(A) = \{A^*_1, A^*_2, \ldots, A^*_n\}$ is finite and that\break
  $T^{\mor}_{\CC^*}(A^*_i, S^*) < \infty$ for all $i$.
  We have already seen (Theorem~\ref{sbrd.thm.big1}) that $T^{\mor}_{\CC}(A, S) \le \sum_{i=1}^n T^{\mor}_{\CC^*}(A^*_i, S^*)$, and
  that $T^{\mor}_{\CC}(A, S) \ge \sum_{i=1}^n T^{\mor}_{\CC^*}(A^*_i, S^*)$ (Lemma~\ref{sbrd.lem.big-xp-main}).

  \bigskip
  
  (2)
  Assume that $U^{-1}(A)$ is infinite and that $T^{\mor}_{\CC^*}(A^*, S^*) < \infty$ for all $A^* \in U^{-1}(A)$.
  Let us show that $T^{\mor}_\CC(A, S) = \infty$ by showing that $T^{\mor}_\CC(A, S) \ge n$ for every $n \in \NN$.
  Fix an $n \in \NN$ and take $n$ distinct $A^*_1, \ldots, A^*_n \in U^{-1}(A)$.
  Then, by Lemma~\ref{sbrd.lem.big-xp-main},
  $$
    T^{\mor}_{\CC}(A, S) \ge \sum_{i=1}^n T^{\mor}_{\CC^*}(A^*_i, S^*) \ge n.
  $$
  
  \bigskip
  
  (3)
  Assume that there is an $A^* \in U^{-1}(A)$ with $T^{\mor}_{\CC^*}(A^*, S^*) = \infty$.
  Let us show that $T^{\mor}_\CC(A, S) = \infty$ by showing that $T^{\mor}_\CC(A, S) \ge n$ for every $n \in \NN$. Fix an $n \in \NN$.
  The proof is a modification of the proof of Lemma~\ref{sbrd.lem.big-xp-main}.

  Since $T^{\mor}_{\CC^*}(A^*, S^*) = \infty$ there exists a $k \ge 2$ and a 
  coloring $\chi' : \hom_{\CC^*}(A^*, S^*) \to k$ such that for every
  $u \in \hom_{\CC^*}(S^*, S^*)$ we have that $|\chi'(u \cdot \hom_{\CC^*}(A^*, S^*))| \ge n$.
  Construct $\chi : \hom_\CC(A, S) \to k$ as follows:
  \begin{itemize}
  \item[] for $f \in \hom_{\CC^*}(A^*, S^*)$ put $\chi(f) = \chi'(f)$;
  \item[] for all other $f \in \hom_\CC(A, S)$ put $\chi(f) = 0$.
  \end{itemize}
  Let $w \in \hom_\CC(S, S)$ be arbitrary. Because $S^*$ is self-similar
  there is a $v : S^* \to \restr{S^*}{w}$.

  In order to show that $|\chi(w \cdot v \cdot \hom_\CC(A, S))| \ge n$ note, first, that
  $$
    |\chi(w \cdot v \cdot \hom_\CC(A, S))| \ge |\chi(w \cdot v \cdot \hom_{\CC^*}(A^*, S^*))|.
  $$
  Since $w \cdot v \cdot \hom_{\CC^*}(A^*, S^*) \subseteq \hom_{\CC^*}(A^*, S^*)$
  we have that
  $$
    \chi(w \cdot v \cdot \hom_{\CC^*}(A^*, S^*)) = \chi'(w \cdot v \cdot \hom_{\CC^*}(A^*, S^*))
  $$
  so, by the choice of $\chi'$,
  $$
    |\chi(w \cdot v \cdot \hom_\CC(A, S))| \ge |\chi'(w \cdot v \cdot \hom_{\CC^*}(A^*, S^*))| \ge n.
  $$
  This concludes the proof.
\end{proof}

\begin{COR}\label{sbrd.cor.big2-obj}
  Let $\CC$ and $\CC^*$ be locally small categories.
  Let $U : \CC^* \to \CC$ be an expansion with unique restrictions and assume that all the morphisms in $\CC$
  are mono. Let $S^* \in \Ob(\CC^*)$ be universal for $\CC^*$ and self-similar, and let $S = U(S^*)$
  (then $S$ is (clearly) universal for $\CC$).
  
  Let $A \in \Ob(\CC)$ be such that $\Aut(A)$ is finite.
  
  $(a)$
  $T_{\CC}(A, S)$ is finite if and only if $U^{-1}(A)$ is finite and $T_{\CC^*}(A^*, S^*) < \infty$ for all $A^* \in U^{-1}(A)$, and in that case
  $$
    T_{\CC}(A, S) = \sum_{A^* \in U^{-1}(A)} \frac{|\Aut(A^*)|}{|\Aut(A)|} \cdot T_{\CC^*}(A^*, S^*).
  $$
  
  $(b)$
  Assume that $U^{-1}(A)$ is finite and $T_{\CC^*}(A^*, S^*) < \infty$ for all $A^* \in U^{-1}(A)$.
  Let $A^*_1$, \ldots, $A^*_n$ be representatives of isomorphism classes of objects in $U^{-1}(A)$.
  Then
  $$
    T_{\CC}(A, S) = \sum_{i=1}^n T_{\CC^*}(A^*_i, S^*).
  $$
\end{COR}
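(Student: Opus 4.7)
The plan is to deduce the corollary purely from Theorem~\ref{sbrd.thm.big2} (which handles morphism-indexed big Ramsey degrees) by converting between $T^{\mor}$ and $T$ using Proposition~\ref{rdbas.prop.big}, and, for part $(b)$, collapsing the sum along $U^{-1}(A)$ to a sum over isomorphism-class representatives using Lemma~\ref{sbrd.lem.iso-disj-union}. No new Ramsey-theoretic input is required; the argument is a bookkeeping exercise with automorphism groups.

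For part $(a)$, first observe that since $U$ is forgetful (hence injective on hom-sets) and $\Aut(A)$ is finite, the natural map $\Aut_{\CC^*}(A^*) \to \Aut_\CC(A)$ induced by $U$ is an injection, so $|\Aut(A^*)| \le |\Aut(A)| < \infty$ for every $A^* \in U^{-1}(A)$. Combined with Proposition~\ref{rdbas.prop.big}, the condition $T^{\mor}_{\CC^*}(A^*, S^*) < \infty$ reduces to $T_{\CC^*}(A^*, S^*) < \infty$, and analogously $T^{\mor}_\CC(A, S) < \infty$ iff $T_\CC(A, S) < \infty$. The finiteness equivalence in $(a)$ is then exactly the finiteness clause of Theorem~\ref{sbrd.thm.big2}. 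Assuming finiteness, I would start from the equality in Theorem~\ref{sbrd.thm.big2}, substitute $T^{\mor}_\CC(A, S) = |\Aut(A)| \cdot T_\CC(A, S)$ on the left and $T^{\mor}_{\CC^*}(A^*, S^*) = |\Aut(A^*)| \cdot T_{\CC^*}(A^*, S^*)$ on each summand on the right, and divide through by $|\Aut(A)|$ to obtain the claimed formula.

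For part $(b)$, I would reorganize the sum in $(a)$ according to isomorphism classes inside $U^{-1}(A)$. Let $A^*_1, \ldots, A^*_n$ be class representatives, and for each $i$ let $I_i$ be the set of objects in $U^{-1}(A)$ isomorphic to $A^*_i$. Two observations make the reduction work: isomorphic objects have the same value of $T_{\CC^*}(\,\cdot\,, S^*)$ and the same automorphism-group order; and by Lemma~\ref{sbrd.lem.iso-disj-union}$(b)$ applied to the representative $A^*_i$, we have $|I_i| = |\Aut(A)|/|\Aut(A^*_i)|$. Substituting into the formula from $(a)$,
\begin{align*}
T_\CC(A, S)
&= \sum_{i=1}^n \sum_{A^* \in I_i} \frac{|\Aut(A^*)|}{|\Aut(A)|} T_{\CC^*}(A^*, S^*) \\
&= \sum_{i=1}^n |I_i| \cdot \frac{|\Aut(A^*_i)|}{|\Aut(A)|} T_{\CC^*}(A^*_i, S^*) = \sum_{i=1}^n T_{\CC^*}(A^*_i, S^*),
\end{align*}
which is exactly the desired identity.

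There is no real obstacle in this argument; the only point requiring momentary care is that one must verify the finiteness of $\Aut(A^*)$ before invoking Proposition~\ref{rdbas.prop.big} on the $\CC^*$ side, which is why the hypothesis $|\Aut(A)| < \infty$ in the corollary is needed. Once that is in place, the whole statement is a mechanical consequence of Theorem~\ref{sbrd.thm.big2} together with Lemma~\ref{sbrd.lem.iso-disj-union}.
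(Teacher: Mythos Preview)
Your proposal is correct and follows essentially the same route as the paper: convert the morphism-level identity of Theorem~\ref{sbrd.thm.big2} to an object-level one via Proposition~\ref{rdbas.prop.big} (using $\Aut(A^*) \subseteq \Aut(A)$ to secure finiteness on the $\CC^*$ side), then for $(b)$ group the sum over $U^{-1}(A)$ by isomorphism classes and apply Lemma~\ref{sbrd.lem.iso-disj-union}$(b)$ to cancel the factor $|I_i|\cdot|\Aut(A^*_i)|/|\Aut(A)|$.
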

\begin{proof}
  $(a)$
  Since $\Aut(A)$ is finite, Proposition~\ref{rdbas.prop.big} implies that
  $T_{\CC}(A, S)$ is finite if and only if $T^{\mor}_{\CC}(A, S)$ is finite.
  Moreover, $\Aut(A^*)$ is finite for all $A^* \in U^{-1}(A)$ because $\Aut(A^*) \subseteq \Aut(A)$.
  
  $(\Leftarrow)$
  Assume, first, that $T_\CC(A, S)$ is not finite. Then $T^{\mor}_{\CC}(A, S)$ is not finite, so by Theorem~\ref{sbrd.thm.big2},
  $U^{-1}(A)$ is not finite or there is an $A^* \in U^{-1}(A)$ such that $T^{\mor}_{\CC^*}(A^*, S^*)$ is not finite.
  The remark at the beginning of the proof then implies that $U^{-1}(A)$ is not finite or there is an $A^* \in U^{-1}(A)$
  such that $T_{\CC^*}(A^*, S^*)$ is not finite.

  $(\Rightarrow)$
  Assume, now, that $T_\CC(A, S)$ is finite. Then $T^{\mor}_{\CC}(A, S)$ is finite, so by Theorem~\ref{sbrd.thm.big2},
  $U^{-1}(A)$ is finite, say $U^{-1}(A) = \{A^*_1, \ldots, A^*_n\}$, and
  $$
    T^{\mor}_{\CC}(A, S) = \sum_{i=1}^n T^{\mor}_{\CC^*}(A^*_i, S^*).
  $$
  By Proposition~\ref{rdbas.prop.big} we get:
  $$
    |\Aut(A)| \cdot T_{\CC}(A, S) = \sum_{i=1}^n |\Aut(A^*_i)| \cdot  T_{\CC^*}(A^*_i, S^*),
  $$
  whence the claim of the corollary follows after dividing by $|\Aut(A)|$.
  
  $(b)$
  By the assumption, $U^{-1}(A) / \Boxed\cong = \{A^*_1 / \Boxed\cong, \ldots, A^*_n / \Boxed\cong \}$. Then
  \begin{align*}
    T_{\CC}(A, S)
    &= \sum_{A^* \in U^{-1}(A)} \frac{|\Aut(A^*)|}{|\Aut(A)|} \cdot T_{\CC^*}(A^*, S^*) && \text{by } (a)\\
    &= \sum_{i=1}^n |A^*_i / \Boxed\cong| \cdot \frac{|\Aut(A^*_i)|}{|\Aut(A)|} \cdot T_{\CC^*}(A^*_i, S^*) \\
    &= \sum_{i=1}^n T_{\CC^*}(A^*_i, S^*) && \text{by Lemma~\ref{sbrd.lem.iso-disj-union}~$(b)$}.\qedhere
  \end{align*}
\end{proof}

\section{Reducts of relational structures}
\label{rdbas.sec.appls}

In this section we apply the abstract machinery developed in the paper to show that
if a countably infinite relational structure has finite big Ramsey degrees, then so do its quantifier-free reducts.
Moreover, we prove that if an ultrahomogeneous countably infinite structure has finite big Ramsey degrees, then so
does the structure obtained from it by adding finitely many constants.
In particular, it follows that the reducts of $(\QQ, \Boxed<)$, the random graph, the random tournament, the random ordered graph,
$(\QQ, \Boxed<, 0)$ and all of their quantifier-free reducts
have finite big Ramsey degrees. The strategy we use is analogous to the one
used in \cite{dasilvabarbosa} to prove that the local orders $\mathbf{S}(n)$ have finite big Ramsey degrees.

A \emph{relational language} is a first-order language $L$ consisting of finitary relational symbols.
An \emph{$L$-structure} $\calA = (A, L^\calA)$ is a set $A$ together with a set $L^\calA$ of finitary relations
on $A$ which are the interpretations of the corresponding symbols in $L$.
An \emph{embedding} $f: \calA \hookrightarrow \calB$ between two $L$-structures
is an injective map $f: A \rightarrow B$ such that
  for every $R \in L$ we have that
  $(a_1, \dots, a_r) \in R^\calA \Leftrightarrow (f(a_1), \dots, f(a_r)) \in R^\calB$,
  where $r$ is the arity of~$R$.
We write $\calA \hookrightarrow \calB$ to denote that $\calA$ embeds into $\calB$, or
$f : \calA \hookrightarrow \calB$ to indicate that $f$ is an embedding.
In this section embeddings are the only structure maps we are interested in, so
a structure $\calU$ is \emph{universal for a class $\KK$} if $\calA \hookrightarrow \calU$ for every
$\calA \in \KK$.

A class $\KK$ of $L$-structures is \emph{hereditary} if the following holds:
if $\calA \in \KK$ and $\calB$ is an $L$-structure which embeds into $\calA$, then $\calB \in \KK$.

Let $L = \{ R_i : i \in I \}$.
An $L$-structure $\calA$ is a \emph{substructure} of an $L$-structure $\calB$
if $A \subseteq B$ and the identity map $a \mapsto a$ is an embedding of $\calA$ into $\calB$.
Let $\calA$ be a structure and $B \subseteq A$. Then $\calA[B]$ denotes the
\emph{substructure of $\calA$ induced by~$B$}: $\calA[B] = (B, \restr{R_i^\calA}{B})_{i \in I}$.
In case of $B = \{b_1, \ldots, b_n\}$ we also write $\calA[b_1, \ldots, b_n]$.

An $L$-structure $\calU$ is \emph{ultrahomogeneous} if for every finite $L$-structure $\calA$
and every pair of embeddings $f : \calA \hookrightarrow \calU$ and
$g : \calA \hookrightarrow \calU$ there is an automorphism $h \in \Aut(\calU)$
such that $f = h \circ g$.

Let $L = \{ R_i : i \in I \}$ and $M = \{ S_j : j \in J \}$ be relational languages. An $M$-structure
$\calA = (A, S_j^{\calA})_{j \in J}$ is a \emph{reduct} of an $L$-structure $\calA^*  = (A, R_i^{\calA^*})_{i \in I}$
if there exists a set $\Phi = \{ \phi_j : j \in J \}$ of $L$-formulas such that
for each $j \in J$ (where $\overline a$ denotes a tuple of elements of the appropriate length):
$$
  \calA \models S_j[\overline a] \text{ if and only if } \calA^* \models \phi_j[\overline a].
$$
We then say that $\calA$ is \emph{defined in $\calA^*$ by $\Phi$}.

A countably infinite relational structure may well have uncountably many distinct reducts.
However, many of those turn out to be one and the same structure presented in different languages.
Reducts $\calA_1 = (A, L_1^{\calA_1})$ and $\calA_2 =(A , L_2^{\calA_2})$ of a relational structure $\calA = (A, L^{\calA})$
are \emph{equivalent}, in symbols $\calA_1 \sim \calA_2$, if $\Aut(\calA_1) = \Aut(\calA_2)$. (The motivation comes from the
fact that if $\calA_1$ and $\calA_2$ are $\omega$-categorical structures with the same automorphism
group then each can be defined in the other by a set of first-order formulas.) We are interested in classifying
reducts of a countably infinite structure up to equivalence. Hence, representatives of equivalence classes of reducts of $\calA$
under~$\sim$ will be referred to as the \emph{essential reducts}.

Let $\KK^*$ be a class of $L$-structures and $\KK$ a class of $M$-structures. We say that
$\calA \in \KK$ is \emph{definable by $\Phi$ in $\KK^*$} if there is an $\calA^* \in \KK^*$ such that
$\calA$ is defined by $\Phi$ in $\calA^*$.

\begin{THM}\label{rdbas.thm.reducts}
  Let $L = \{ R_1, \ldots, R_n \}$ be a finite relational language, let
  $M = \{ S_j : j \in J \}$ be a relational language and let
  $\Phi = \{ \phi_j : j \in J \}$ be a set of quantifier-free $L$-formulas.
  Let $\KK^*$ be a hereditary class of at most countably infinite $L$-structures and let
  $\KK$ be the class of all the $M$-structures which are definable by $\Phi$ in~$\KK^*$.
  Moreover, let $\calS^* \in \KK^*$ be universal for $\KK^*$ and let $\calS \in \KK$ be the $M$-structure
  defined in $\calS^*$ by $\Phi$. Then
  \begin{itemize}
  \item
    $\calS$ is universal for $\KK$, and
  \item
    if $\calS^*$ has finite big Ramsey degrees, then so does $\calS$.
  \end{itemize}
\end{THM}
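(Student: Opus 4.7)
The plan is to recast the $\Phi$-reduct construction as an expansion functor with restrictions between suitable categories and then invoke Theorem~\ref{sbrd.thm.big1}. I would let $\CC^*$ be the category whose objects are the at most countable $L$-structures in $\KK^*$ and whose morphisms are $L$-embeddings, and let $\CC$ be the analogous category built from $\KK$ with $M$-embeddings. The functor $U : \CC^* \to \CC$ sends every $\calA^* \in \Ob(\CC^*)$ to the $M$-structure defined in $\calA^*$ by $\Phi$ and sends every embedding to itself as a set map. Since $\Phi$ consists of quantifier-free $L$-formulas, and quantifier-free formulas are preserved \emph{and} reflected by embeddings, $U(f)$ is an $M$-embedding whenever $f$ is an $L$-embedding; hence $U$ is a forgetful functor in the sense of Section~\ref{rdbas.sec.prelim}, and it is surjective on objects by the very definition of $\KK$, so it is an expansion.

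The crucial verification is that $U$ is an expansion \emph{with restrictions}. Given $f \in \hom_\CC(\calA, \calB)$ and $\calB^* \in U^{-1}(\calB)$, I would define $\calA^*$ on the underlying set $A$ by pulling the $L$-structure back along $f$, i.e.\ $R^{\calA^*} := \{\overline a : f(\overline a) \in R^{\calB^*}\}$ for each $R \in L$. Then $f : \calA^* \to \calB^*$ is by construction an $L$-embedding, so $\calA^*$ is isomorphic to a substructure of $\calB^* \in \KK^*$, and heredity of $\KK^*$ gives $\calA^* \in \KK^*$. Quantifier-freeness is used exactly here to check $U(\calA^*) = \calA$: for every tuple $\overline a$ one has $\calA^* \models \phi_j[\overline a]$ iff $\calB^* \models \phi_j[f\overline a]$ iff $\calB \models S_j[f\overline a]$ iff $\calA \models S_j[\overline a]$, where the last equivalence uses that $f$ is an $M$-embedding.

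Universality of $\calS$ for $\KK$ is then immediate: every $\calA \in \KK$ equals $U(\calA^*)$ for some $\calA^* \in \KK^*$, which $L$-embeds into $\calS^*$ by universality of $\calS^*$, and the same map is an $M$-embedding $\calA \hookrightarrow \calS$. For the Ramsey-degree statement, fix a finite $\calA \in \KK$ embedding into $\calS$. Theorem~\ref{sbrd.thm.big1} delivers
$$
  T^{\mor}_\CC(\calA, \calS) \le \sum_{\calA^* \in U^{-1}(\calA)} T^{\mor}_{\CC^*}(\calA^*, \calS^*).
$$
Since $L$ is finite and $A$ is finite, there are only finitely many $L$-structures on $A$, so $U^{-1}(\calA)$ is finite. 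For each summand: if $\calA^* \hookrightarrow \calS^*$, the hypothesis on $\calS^*$ combined with Proposition~\ref{rdbas.prop.big} (using $|\Aut(\calA^*)| < \infty$) gives $T^{\mor}_{\CC^*}(\calA^*, \calS^*) < \infty$; otherwise $\hom_{\CC^*}(\calA^*, \calS^*) = \varnothing$ and $T^{\mor}_{\CC^*}(\calA^*, \calS^*) = 1$ trivially. Hence $T^{\mor}_\CC(\calA, \calS) < \infty$, and Proposition~\ref{rdbas.prop.big} applied in $\CC$ (using $|\Aut(\calA)| < \infty$) converts this into $T_\CC(\calA, \calS) < \infty$.

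The main obstacle, such as it is, lies in isolating the correct categorical setup and recognising that quantifier-freeness of $\Phi$ is precisely what makes the pull-back $\calA^*$ land in $\KK^*$ \emph{and} satisfy $U(\calA^*) = \calA$, so that $U$ is an expansion with restrictions; once this is in place, the abstract additivity theorem does all the work and neither self-similarity of $\calS^*$ nor any amalgamation property is needed.
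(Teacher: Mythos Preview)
Your proof is correct and follows essentially the same route as the paper's: set up the $\Phi$-reduct as an expansion $U:\KK^*\to\KK$, verify it has restrictions via the pull-back construction (using heredity of $\KK^*$ and quantifier-freeness of $\Phi$), and then invoke Theorem~\ref{sbrd.thm.big1} together with Proposition~\ref{rdbas.prop.big}. The only cosmetic difference is that your ``otherwise $\hom_{\CC^*}(\calA^*,\calS^*)=\varnothing$'' case is vacuous, since $\calS^*$ is universal for $\KK^*$ and every $\calA^*\in U^{-1}(\calA)$ lies in $\KK^*$ by definition.
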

\begin{proof}
  We shall start by a simple but important observation. Let $\calA^*$ and $\calB^*$ be
  $L$-structures, let $\calA$ be an $M$-structure defined in $\calA^*$ by $\Phi$ and
  let $\calB$ be an $M$-structure defined in $\calB^*$ by $\Phi$. If $f$ is an embedding
  $\calA^* \hookrightarrow \calB^*$ then $f$ is also an embedding $\calA \hookrightarrow \calB$.
  This follows by a straightforward induction on the complexity the formula in question.
  Consequently, $\calS$ is universal for $\KK$ because $\calS^*$ is universal for $\KK^*$.
  
  To show that $\calS$ has finite big Ramsey degrees
  let us first note that we can understand
  $\KK$ and $\KK^*$ as categories of structures by taking embeddings as morphisms.
  Define $U : \KK^* \to \KK$ on objects by $U(\calA^*) = \mathstrut$ the $M$-structure defined in $\calA^*$ by $\Phi$,
  and on morphisms by $U(f) = f$. This is clearly an expansion. Let us show that
  $U$ has restrictions.
  
  Put $I = \{1, \ldots, n \}$.
  Let $\calA^* = (A, R^{\calA^*}_i)_{i \in I} \in \KK^*$ be arbitrary, let
  $U(\calA^*) = \calA = (A, S^{\calA}_j)_{j \in J}$, and let $f : \calB \hookrightarrow \calA$ be an
  embedding in $\KK$ where $\calB = (B, S^{\calB}_j)_{j \in J}$. By the definition of $\KK$ there is
  a $\calB^* = (B, R^{\calB^*}_i)_{i \in I} \in \KK^*$ such that $U(\calB^*) = \calB$.
  \begin{center}
    \begin{tikzcd}
      \calB_1^* = (B, R^{\calB_1^*}_i)_{i \in I} \arrow[d, mapsto, "U"] \arrow[rr, hook, bend left=10, "f"]
      & \calB^* = (B, R^{\calB^*}_i)_{i \in I} \arrow[d, mapsto, "U"]
      & \calA^* = (A, R^{\calA^*}_i)_{i \in I} \arrow[d, mapsto, "U"]
      \\
      \calB_1 = (B, S^{\calB_1}_j)_{j \in J} \arrow[r, equal, "?"] \arrow[rr, hook, bend right=10, "f"']
      & \calB = (B, S^{\calB}_j)_{j \in J} \arrow[r, hook, "f"]
      & \calA = (A, S^{\calA}_j)_{j \in J}
    \end{tikzcd}
  \end{center}
  Define $\calB_1^* = (B, R^{\calB_1^*}_i)_{i \in I}$ as follows:
  $\overline b \in R^{\calB_1^*}_i \text{ iff } f(\overline b) \in R^{\calA^*}_i$, $i \in I$.
  Then, clearly, $f : \calB_1^* \hookrightarrow \calA^*$ so $\calB_1^* \in \KK^*$ because
  $\KK^*$ is hereditary. Let $\calB_1 = (B, S^{\calB_1}_j)_{j \in J} = U(\calB_1^*)$.
  In order to complete the proof it suffices to show that $\calB_1 = \calB$. But this is immediate:
  $f$ is an embedding $\calB_1 \hookrightarrow \calA$ by the remark we made at the beginning
  of the proof; therefore, $f : \calB \hookrightarrow \calA$ and $f : \calB_1 \hookrightarrow \calA$
  whence $\calB = \calB_1$.

  For any finite $\calA^* \in \KK^*$ we know that $T_{\KK^*}(\calA^*, \calS^*) < \infty$ (by assumption),
  whence $T^\mor_{\KK^*}(\calA^*, \calS^*) < \infty$ by Proposition~\ref{rdbas.prop.big}.
  Now take any finite $\calA \in \KK$. By Theorem~\ref{sbrd.thm.big1} we have that
  $$
    T^{\mor}_{\KK}(\calA, \calS) \le \sum_{\calA^* \in U^{-1}(\calA)} T^{\mor}_{\KK^*}(\calA^*, \calS^*).
  $$
  Since both $L$ and $\calA$ are finite, it follows that $U^{-1}(\calA)$ is finite, the sum on the right is finite.
  Therefore, $T^{\mor}_{\KK}(\calA, \calS) < \infty$. Another application of Proposition~\ref{rdbas.prop.big}
  yields that $T_{\KK}(\calA, \calS) < \infty$.
\end{proof}

The fact that $(\QQ, \Boxed<)$ has finite big Ramsey degrees was established by Devlin in~\cite{devlin}
and the list of essential reducts of $(\QQ, \Boxed<)$ follows from a result
of Cameron presented in~\cite[Section 3.4]{cameron-oligo}. The five essential reducts of $(\QQ, \Boxed<)$ are
$(\QQ, \Boxed<)$ itself, the trivial structure $(\QQ, \0)$ and the three structures
$(\QQ, \mathrm{Betw})$, $(\QQ, \mathrm{Cyc})$ and $(\QQ, \mathrm{Sep})$ where:
\begin{align*}
  \mathrm{Betw}(x, y, z) &= x < y < z \lor z < y < x,\\
  \mathrm{Cyc}(x, y, z) &= x < y < z \lor y < z < x \lor z < x < y, \text{ and}\\
  \mathrm{Sep}(x, y, u, v) &= (\mathrm{Cyc}(x, y, u) \land \mathrm{Cyc}(x, v, y)) \lor (\mathrm{Cyc}(x, u, y) \land \mathrm{Cyc}(x, y, v)).
\end{align*}
Since all the essential reducts of $(\QQ, \Boxed<)$ are defined in $(\QQ, \Boxed<)$
by quantifier-free formulas, Theorem~\ref{rdbas.thm.reducts} applies and we have:

\begin{COR}
  All of the 5 essential reducts of $(\QQ, \Boxed<)$ have finite big Ramsey degrees.
\end{COR}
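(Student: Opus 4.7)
The plan is to invoke Theorem~\ref{rdbas.thm.reducts} once for each of the five essential reducts, with $(\QQ, \Boxed<)$ playing the role of the ambient universal structure $\calS^*$. The only two ingredients the theorem requires are (a) that $\calS^*$ has finite big Ramsey degrees, and (b) that each reduct is defined in $\calS^*$ by a set of quantifier-free formulas over the language of~$\calS^*$. Ingredient (a) is exactly Devlin's theorem~\cite{devlin}, and ingredient (b) is immediate from the explicit definitions of $\mathrm{Betw}$, $\mathrm{Cyc}$, and $\mathrm{Sep}$ listed just above the corollary: each is a Boolean combination of atomic formulas $x < y$, hence quantifier-free. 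For the two remaining reducts, $(\QQ, \Boxed<)$ itself and the trivial structure $(\QQ, \0)$, the defining formulas are trivial.

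To match the data demanded by Theorem~\ref{rdbas.thm.reducts}, I would take $L = \{\Boxed<\}$ and let $\KK^*$ be the class of all at most countably infinite linear orders. This class is clearly hereditary (any substructure of a linear order is a linear order), and $\calS^* = (\QQ, \Boxed<)$ is universal for it. For each of the five reducts I then pick the appropriate target language $M$ (namely $\{\Boxed<\}$, $\0$, $\{\mathrm{Betw}\}$, $\{\mathrm{Cyc}\}$, or $\{\mathrm{Sep}\}$) and the corresponding set $\Phi$ of quantifier-free $L$-formulas, let $\KK$ be the class of $M$-structures definable by $\Phi$ in $\KK^*$, and take $\calS$ to be the $M$-structure defined in $\calS^*$ by~$\Phi$. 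All hypotheses of Theorem~\ref{rdbas.thm.reducts} are then satisfied, and the theorem immediately yields $T_\KK(\calA, \calS) < \infty$ for every finite $\calA \in \KK$.

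Since the argument handles each of the five essential reducts in exactly the same way, the corollary follows. There is no real obstacle here, as the heavy lifting was already done in Theorem~\ref{rdbas.thm.reducts} and in Devlin's original result~\cite{devlin}; the only thing requiring a moment's thought is confirming the quantifier-free nature of Cameron's defining formulas, which is visible at a glance, and checking that $\KK^*$ is hereditary, which is trivial.
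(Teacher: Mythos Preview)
Your proposal is correct and takes essentially the same approach as the paper: the paper spells out the application of Theorem~\ref{rdbas.thm.reducts} only for $(\QQ, \mathrm{Betw})$ with $\KK^*$ the class of at most countable chains and $\calS^* = (\QQ, \Boxed<)$, leaving the remaining reducts implicit, while you run through all five uniformly. The ingredients you identify---Devlin's theorem for~(a) and the visibly quantifier-free defining formulas for~(b)---are exactly those the paper uses.
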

\begin{proof}
  Let us only show that $(\QQ, \mathrm{Betw})$ has finite big Ramsey degrees.
  Let $\KK^*$ be the class of all the finite and countably infinite chains, and let $\KK$ be the
  class of all the structures which are defined by $\Phi = \{\mathrm{Betw}\}$ in $\KK^*$. Then
  $(\QQ, \Boxed<)$ is universal for $\KK^*$ and $(\QQ, \mathrm{Betw})$ is defined in $(\QQ, \Boxed<)$ by $\Phi$.
  Since $(\QQ, \Boxed<)$ has finite big Ramsey degrees \cite{devlin}, so does $(\QQ, \mathrm{Betw})$.
\end{proof}

Let $\calR = (R, E^\calR)$ be the \emph{random graph}, the unique (up to isomorphism) undirected
countable ultrahomogeneous graph which is universal for the class of all the finite and countably infinite undirected graphs.
The fact that $\calR$ has finite big Ramsey degrees was established by Sauer in~\cite{Sauer-2006}
and the list of its essential reducts is due to Thomas~\cite{Thomas-1991}.
The five essential reducts of $\calR$ are $\calR$ itself, the trivial structure $(R, \0)$ and the three structures
$(R, \rho_3)$, $(R, \rho_4)$ and $(R, \rho_5)$ where $\rho_n \subseteq R^n$ is an $n$-ary relation on $R$ defined by
\begin{align*}
  (v_1, \ldots, v_n) \in \rho_n \text{ iff }
  & \text{the number of undirected edges in the}\\
  & \text{subgraph of $\calR$ induced by $v_1, \ldots, v_n$ is odd.}
\end{align*}
It is easy to see that each of the essential reducts of $\calR$ is defined in $\calR$
by a quantifier-free formula. So Theorem~\ref{rdbas.thm.reducts} applies and we have:

\begin{COR}
  All of the 5 essential reducts of $\calR$ have finite big Ramsey degrees.
\end{COR}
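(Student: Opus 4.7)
The plan is to apply Theorem~\ref{rdbas.thm.reducts} once to each of the five essential reducts. In every case I would take $L = \{E\}$ and let $\KK^*$ be the hereditary class of all finite and countably infinite undirected graphs, so that $\calS^* = \calR$ is universal for $\KK^*$ and, by Sauer's theorem~\cite{Sauer-2006}, has finite big Ramsey degrees. All the hypotheses of Theorem~\ref{rdbas.thm.reducts} except the existence of a quantifier-free defining set $\Phi$ for the reduct are then automatic, so the entire proof reduces to exhibiting such a $\Phi$ case by case.

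For the reduct $\calR$ itself I would take $\Phi = \{E(x, y)\}$, and for the trivial reduct $(R, \0)$ I would take $\Phi = \0$. For each $(R, \rho_n)$ with $n \in \{3, 4, 5\}$, the relation $\rho_n(v_1, \ldots, v_n)$ by definition records the parity of the number of edges among the $\binom{n}{2}$ pairs $\{v_i, v_j\}$; since parity of a finite collection of Boolean values is itself a Boolean function, this is computed by a quantifier-free $L$-formula, namely the symmetric difference (XOR) of the atomic formulas $E(x_i, x_j)$ for $1 \le i < j \le n$. Feeding these choices of $\Phi$ into Theorem~\ref{rdbas.thm.reducts} then yields that the corresponding reduct has finite big Ramsey degrees in each of the five cases.

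There is no genuine obstacle: heredity of $\KK^*$ is immediate, universality of $\calR$ is its defining feature, finite big Ramsey degrees for $\calR$ is the deep external input (Sauer's theorem), and quantifier-free definability of each $\rho_n$ is an elementary combinatorial observation. The argument is a direct instance of the mechanism developed in Section~\ref{rdbas.sec.appls}, mirroring verbatim the treatment of the reducts of $(\QQ, \Boxed{<})$ given immediately above; the only new ingredient relative to that corollary is the XOR-form of the formulas defining the $\rho_n$.
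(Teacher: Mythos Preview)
Your proposal is correct and follows exactly the approach the paper takes: the paper simply observes that each essential reduct of $\calR$ is defined by a quantifier-free formula and invokes Theorem~\ref{rdbas.thm.reducts} together with Sauer's result~\cite{Sauer-2006}, without even spelling out the formulas. Your write-up is in fact more detailed than the paper's own treatment; the only cosmetic point is that your XOR description of $\rho_n$ tacitly assumes the arguments are pairwise distinct, but this is harmless since equality is available and any needed case distinction is still quantifier-free.
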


Let $\calT = (T, \Boxed\to)$ be the \emph{random tournament}, the unique (up to isomorphism)
countable ultrahomogeneous tournament which is universal for the class of all the finite and countably infinite
tournaments. The fact that $\calT$ has finite big Ramsey degrees was established by Sauer in~\cite{Sauer-2006}
and the list of its essential reducts is due to Bennett~\cite{Bennet-1997}.
The five essential reducts of $\calT$ are $\calT$ itself, the trivial structure $(T, \0)$ and the three structures
$(T, \mathrm{Betw'})$, $(T, \mathrm{Cyc'})$ and $(T, \mathrm{Sep'})$ defined as follows.
Let $\mathrm{Sep'}(x, y, u, v)$ be the first-order formula which expresses the fact that
$|\Boxed\to \sec (\{x, y\} \times \{u, v\})|$ is even, and let
\begin{align*}
  \mathrm{Betw'}(x, y, z) &= C(x, y, z) \lor C(z, y, x), \text{ and}\\
  \mathrm{Cyc'}(x, y, z) &= C(x, y, z) \lor D(x, z, y) \lor D(y, x, z) \lor D(z, y, x),
\end{align*}
where
\begin{align*}
  C(x, y, z) &= x \to y \land y \to z \land z \to x, \text{ and}\\
  D(x, y, z) &= x \to y \land y \to z \land x \to z.
\end{align*}
Since all the essential reducts of $\calT$ are defined in $\calT$ by quantifier-free formulas,
Theorem~\ref{rdbas.thm.reducts} applies and we have:

\begin{COR}
  All of the 5 essential reducts of $\calT$ have finite big Ramsey degrees.
\end{COR}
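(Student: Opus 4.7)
The plan is to mimic the proofs of the preceding two corollaries and reduce the statement to a direct application of Theorem~\ref{rdbas.thm.reducts}. The key input from outside the paper is Sauer's result \cite{Sauer-2006} that the random tournament $\calT$ itself has finite big Ramsey degrees, together with Bennet's classification \cite{Bennet-1997} which tells us that the only essential reducts are $\calT$, $(T, \0)$, $(T, \mathrm{Betw'})$, $(T, \mathrm{Cyc'})$, and $(T, \mathrm{Sep'})$. The trivial reduct $(T, \0)$ is settled trivially (every finite substructure has big Ramsey degree~1), and $\calT$ itself is handled by assumption, so the work concentrates on the three structures $(T, \mathrm{Betw'})$, $(T, \mathrm{Cyc'})$, $(T, \mathrm{Sep'})$.

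For each of these three cases I would set $\KK^*$ to be the hereditary class of all finite and countably infinite tournaments, take $\calS^* = \calT$, which is universal for $\KK^*$, and take $\Phi$ to be the one-element set consisting of the appropriate defining formula ($\mathrm{Betw'}$, $\mathrm{Cyc'}$, or $\mathrm{Sep'}$). Then $\KK$ is by definition the class of $M$-structures definable in $\KK^*$ by $\Phi$, and the reduct under consideration is exactly the structure $\calS$ defined in $\calT$ by $\Phi$. Theorem~\ref{rdbas.thm.reducts} requires that the language $L = \{\Boxed\to\}$ be finite (it is), that $\KK^*$ be hereditary (it is), and that the defining formulas be quantifier-free.

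The one step that requires genuine verification is the quantifier-freeness of the three formulas, and this is the only conceivable obstacle. For $\mathrm{Betw'}$ and $\mathrm{Cyc'}$ this is immediate from the definitions displayed just before the corollary: both $C(x,y,z)$ and $D(x,y,z)$ are conjunctions of atomic formulas $x \to y$, and $\mathrm{Betw'}$ and $\mathrm{Cyc'}$ are Boolean combinations of such conjunctions. The case of $\mathrm{Sep'}$ is stated using a counting predicate ``$|\Boxed\to \sec (\{x,y\} \times \{u,v\})|$ is even'', which looks non-elementary but unfolds to a finite Boolean combination of the four atomic formulas $x \to u$, $x \to v$, $y \to u$, $y \to v$: the parity of the number of these four atoms that hold is expressible as an explicit quantifier-free Boolean formula (the XOR of the four atoms). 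Hence $\Phi$ is a set of quantifier-free $L$-formulas in all three cases.

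With these verifications in place, Theorem~\ref{rdbas.thm.reducts} applies and yields $T_\KK(\calA, \calS) < \infty$ for every finite $\calA \in \KK$, which is precisely the statement that the given essential reduct has finite big Ramsey degrees. Applying this argument to each of the three nontrivial reducts, and handling $\calT$ and $(T, \0)$ directly, completes the proof.
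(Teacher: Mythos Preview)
Your proposal is correct and follows essentially the same route as the paper, which simply observes that all five essential reducts are defined in $\calT$ by quantifier-free formulas and invokes Theorem~\ref{rdbas.thm.reducts}. The only differences are cosmetic: the paper applies the theorem uniformly to all five reducts (including $\calT$ and $(T,\0)$) rather than treating those two separately, and it asserts rather than verifies the quantifier-freeness of $\mathrm{Sep'}$; your explicit unfolding of the parity condition as a Boolean combination of the four atoms is a welcome addition.
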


Quite recently Hubi\v cka proved in~\cite{hubicka-param-spaces} that the \emph{random poset},
the unique (up to isomorphism) countable ultrahomogeneous partially ordered set
which is universal for the class of all the finite and countably infinite partially ordered sets,
has finite big Ramsey degrees. Moreover, Hubi\v cka proves that free superpositions of finitely many
structures that have particular interpretations in the random poset (see~\cite{hubicka-param-spaces} for details)
also have finite big Ramsey degrees. In particular, it follows that the \emph{random ordered graph},
the unique (up to isomorphism) countable ultrahomogeneous ordered graph which is universal for the class of all
the finite and countably infinite ordered graphs (an ordered graph is a simple graph together
with a linear ordering of its vertices). Since the random graph, the random tournament and $(\QQ, \Boxed<)$
are all quantifier-free reducts of the random ordered graph (see~\cite{bpp} by
Bodirsky, Pinsker and Pongr\'acz for details and for the complete
list of first-order reducts of the random ordered graph), the results presented above follow immediately
from~\cite{hubicka-param-spaces} and~\cite{bpp}. We have nevertheless decided to keep the exposition so as to reflect
the historical development of the problem.

Going back to the random poset $\calP$ and Hubi\v cka's result~\cite{hubicka-param-spaces},
let us recall that the list of the essential reducts of $\calP$ were described in~\cite{ppppsz-2014}.
The five essential reducts of $\calP = (P, \Boxed{\le})$ are $\calP$ itself, the trivial structure $(P, \Boxed=)$
and the three structures $(P, \bot)$, $(T, \mathrm{Cyc}_\calP)$ and $(P, \mathrm{Par})$ defined as follows
(where $x < y$ stands for $x \le y \land x \ne y$):
\begin{align*}
  x \bot y = \mathstrut &x \not\le y \land y \not\le x,\\
  \mathrm{Cyc}_\calP(x, y, z) = \mathstrut&(x < y < z) \lor (y < z < x) \lor (z < x < y) \lor \\
                                &\lor (x < y \land x \bot z \land y \bot z)\\
                                &\lor (y < z \land y \bot x \land z \bot x)\\
                                &\lor (z < x \land z \bot y \land x \bot y), \text{and}\\
  \mathrm{Par}(x, y, z) = \mathstrut&x,\,y,\,z \text{ are distinct and the number of 2-element}\\
                          &\text{subsets of incomparable elements of } \{x,y,z\} \text{ is odd}.
\end{align*}

Since all the essential reducts of $\calP$ can be defined in $\calP$ by quantifier-free formulas,
Theorem~\ref{rdbas.thm.reducts} applies and we have:

\begin{COR}
  All of the 5 essential reducts of $\calP$ have finite big Ramsey degrees.
\end{COR}

Finally, we shall prove that $(\QQ, \Boxed<, 0)$ and all of its 116 essential reducts have finite big Ramsey
degrees. Since $(\QQ, \Boxed<, 0)$ is just $(\QQ, \Boxed<)$ with an additional constant, we shall
start by showing that adding constants to countable ultrahomogeneous structures
preserves the property of having finite big Ramsey degrees.

\begin{THM}\label{rdbas.thm.ccc}
  Let $L$ be a relational language, let $c_1, \ldots, c_n \notin L$ be new constant symbols and let
  $L' = L \union \{c_1, \ldots, c_n\}$.
  Let $\calU = (U, L^\calU)$ be a countably infinite ultrahomogeneous $L$-structure and let
  $\calU' = (U, L^\calU, u_1, \ldots, u_n)$ be an $L'$-structure obtained from $\calU$ by adding
  $n$ constants to the language. If $\calU$ has finite big Ramsey degrees then so does $\calU'$.
\end{THM}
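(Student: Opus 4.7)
The plan is to bound $T^\mor_{L'}(\calA', \calU')$ directly by $T^\mor_L(\calA, \calU)$, where $\calA$ is the $L$-reduct of $\calA'$, and then invoke Proposition~\ref{rdbas.prop.big} to pass between morphism- and object-form degrees. Writing $a_i = c_i^{\calA'} \in A$, $u_i = c_i^{\calU'} \in U$, $\bar a = (a_1, \ldots, a_n)$ and $\bar u = (u_1, \ldots, u_n)$, one unwinds the definitions to see that $L'$-embeddings $\calA' \hookrightarrow \calU'$ are exactly those $L$-embeddings $e \colon \calA \hookrightarrow \calU$ with $e(\bar a) = \bar u$, and $L'$-self-embeddings of $\calU'$ are the $L$-self-embeddings of $\calU$ fixing $\bar u$ pointwise.

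The pivotal observation is ultrahomogeneity-based: since $\calA' \hookrightarrow \calU'$, the tuple $\bar a$ realizes the same quantifier-free $L$-type in $\calA$ as $\bar u$ in $\calU$; hence for every $e \in \hom_L(\calA, \calU)$ the tuple $e(\bar a)$ has the same $L$-type as $\bar u$, and ultrahomogeneity of $\calU$ supplies an $\alpha \in \Aut(\calU)$ with $\alpha(e(\bar a)) = \bar u$. Once and for all, choose for each $\bar v$ in the orbit $\Aut(\calU) \cdot \bar u$ some such $\alpha_{\bar v} \in \Aut(\calU)$, with the normalization $\alpha_{\bar u} = \id_\calU$.

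Given a $k$-coloring $\chi \colon \hom_{L'}(\calA', \calU') \to k$, extend it to $\tilde\chi \colon \hom_L(\calA, \calU) \to k$ by $\tilde\chi(e) = \chi(\alpha_{e(\bar a)} \cdot e)$. This is well-defined since $\alpha_{e(\bar a)} \cdot e$ sends $\bar a$ to $\bar u$ and thus lies in $\hom_{L'}(\calA', \calU')$, and it restricts to $\chi$ there. Put $N := T^\mor_L(\calA, \calU)$, which is finite by hypothesis together with Proposition~\ref{rdbas.prop.big}; this furnishes an $L$-self-embedding $w \colon \calU \hookrightarrow \calU$ with $|\tilde\chi(w \cdot \hom_L(\calA, \calU))| \le N$. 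Set $\bar w = (w(u_1), \ldots, w(u_n))$ and $f = \alpha_{\bar w} \cdot w$; then $f$ is an $L$-self-embedding of $\calU$ fixing $\bar u$, i.e., an $L'$-self-embedding of $\calU'$. For $e \in \hom_{L'}(\calA', \calU')$ one has $(w \cdot e)(\bar a) = w(\bar u) = \bar w$, hence $\tilde\chi(w \cdot e) = \chi(\alpha_{\bar w} \cdot w \cdot e) = \chi(f \cdot e)$, yielding
\begin{equation*}
  \chi(f \cdot \hom_{L'}(\calA', \calU')) \subseteq \tilde\chi(w \cdot \hom_L(\calA, \calU)),
\end{equation*}
a set of size at most $N$. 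Therefore $T^\mor_{L'}(\calA', \calU') \le N < \infty$, and Proposition~\ref{rdbas.prop.big} converts this to $T_{L'}(\calA', \calU') < \infty$, since $\Aut(\calA')$ is finite.

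The main point doing all the real work is the orbit-correcting extension $\chi \mapsto \tilde\chi$. Observe that the abstract expansion machinery of Section~\ref{rdbas.sec.additivity-big} delivers inequalities of the form $T^\mor_\CC \le \sum T^\mor_{\CC^*}$ for an expansion $\CC^* \to \CC$, which is the wrong direction here: we are trying to pass from the less-expanded $\calU$ to the more-expanded $\calU'$. Ultrahomogeneity is precisely what rescues us by promoting ``same quantifier-free type as $\bar u$'' to ``same $\Aut(\calU)$-orbit as $\bar u$,'' thereby guaranteeing that every $L$-embedding $\calA \hookrightarrow \calU$ can be post-composed with an automorphism to become an $L'$-embedding $\calA' \hookrightarrow \calU'$.
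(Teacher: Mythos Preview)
Your proof is correct and takes a genuinely different, considerably more elementary route than the paper's. The paper invokes the categorical transfer principle of Theorem~\ref{bigrd.thm.1}: it builds an auxiliary functor $G : \DD \to \CC$ encoding the constants into the names of elements, identifies $\DD$ with a subcategory $\BB$ of $\CC$, and then spends most of the argument verifying the cocone hypothesis of Theorem~\ref{bigrd.thm.1}. That verification is technical: it analyzes connected components of the underlying binary digraph, uses ultrahomogeneity repeatedly to align the images of the constants across different components, and finally assembles a commuting cocone in $\BB$. By contrast, you bypass all of that machinery with a single orbit-correcting step: choose for each tuple $\bar v$ in the $\Aut(\calU)$-orbit of $\bar u$ an automorphism $\alpha_{\bar v}$ sending $\bar v$ to $\bar u$, extend the coloring $\chi$ to $\tilde\chi(e) = \chi(\alpha_{e(\bar a)} \cdot e)$, and observe that the witness $w$ for $\tilde\chi$ can be post-composed with $\alpha_{w(\bar u)}$ to yield an $L'$-self-embedding witnessing the same bound for $\chi$.

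Your approach also yields a sharper quantitative statement, namely $T^\mor_{L'}(\calA', \calU') \le T^\mor_L(\calA, \calU)$, whereas the paper's route through Theorem~\ref{bigrd.thm.1} only gives $T_\BB(\overline\calA, \overline\calU) \le T_\CC(\overline\calA, \calU)$ at the object level without an explicit comparison of the morphism degrees. What the paper's approach buys is that it illustrates the cocone machinery of Theorem~\ref{bigrd.thm.1} in action; your argument, on the other hand, is self-contained and would work even without the categorical framework of Sections~\ref{rdbas.sec.mon}--\ref{rdbas.sec.additivity-big}. Your closing remark that the expansion machinery of Section~\ref{rdbas.sec.additivity-big} points the wrong way here is exactly right and explains why neither proof uses it.
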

\begin{proof}
  Let $\CC$ be the class of all the finite and countably infinite structures that embed into
  $\calU = (U, L^\calU)$
  and let $\DD$ be the class of all the finite and countably infinite structures that embed into
  $\calU' = (U, L^\calU, u_1, \ldots, u_n)$.
  We treat $\CC$ and $\DD$ as categories of structures by taking embeddings as morphisms.
  Assume that $\calU$ has finite big Ramsey degrees. The main idea of the proof is to use
  Theorem~\ref{bigrd.thm.1} to transport the property of having finite big Ramsey degrees from
  $\CC$ to $\DD$. Although $\DD$ is not a subcategory of $\CC$, it is easy to find a subcategory $\BB$
  of $\CC$ which is isomorphic to $\DD$ as follows.

  For a structure $\calA = (A, L^\calA, a_1, \ldots, a_n) \in \Ob(\DD)$ let $G(\calA) \in \Ob(\CC)$ be the
  $L$-structure which simply encodes the constants into the names of the elements of the structure as follows:
  $$
    G(\calA) = (A \times \{(a_1, \ldots, a_n)\}, L^{G(\calA)})
  $$
  where for each $R \in L$ we have that
  $$
    R^{G(\calA)} = \big\{ \big((x_1, a_1, \ldots, a_n), \ldots, (x_h, a_1, \ldots, a_n)\big) : (x_1, \ldots, x_h) \in R^\calA \big\}.
  $$
  This simple trick ensures that $G$ is injective on objects. Let us apply the same trick to morphisms.
  For $\calA = (A, L^\calA, a_1, \ldots, a_n)$, $\calB = (B, L^\calB, b_1, \ldots, b_n) \in \Ob(\DD)$
  and an embedding $f : \calA \hookrightarrow \calB$ define $G(f) : G(\calA) \hookrightarrow G(\calB)$ by
  $$
    G(f)(x, a_1, \ldots, a_n) = (f(x), b_1, \ldots, b_n).
  $$
  Then $G : \DD \to \CC$ is clearly a functor injective on both objects and hom-sets. Let $\BB$ be the
  subcategory of $\CC$ whose objects are of the form $G(\calA)$ for some $\calA \in \Ob(\DD)$ and nothing else,
  and whose morphisms are of the form $G(f)$ for some morphism $f$ in $\DD$ and nothing else.
  Then $\BB$ is a (not necessarily full) subcategory of $\CC$ isomorphic to $\DD$, so in order to
  complete the proof it suffices to show that $G(\calU') = \overline \calU =
  (\overline U, L^{\overline \calU})$ has finite big Ramsey degrees in~$\BB$.
  
  Take any $\calA' = (A, L^\calA, a_1, \ldots, a_n) \in \Ob(\DD)$ and let
  $G(\calA') = \overline \calA = (\overline A, L^{\overline \calA}) \in \Ob(\BB)$. Let
  $F : \Delta \to \BB$ be an $(\overline \calA, \overline \calU)$-diagram.
  Let $\Delta = T \union B$ where $T$ is the top row of~$\Delta$ and~$B$ is the
  bottom row of~$\Delta$, and let $(e_i : \overline \calU \to \calU)_{i \in T}$ be a commuting cocone over~$F$
  in~$\CC$:
  \begin{center}
    \begin{tikzcd}
    & & \calU & & \CC
  \\
    \overline \calU \arrow[urr] & \overline \calU \arrow[ur, "e_i"'] & \dots & \overline \calU \arrow[ul, "e_j"] & \overline \calU \arrow[ull]
  \\
    \overline \calA \arrow[u] \arrow[ur] & \overline \calA \arrow[urr, "\overline v" near start] \arrow[u, "\overline w"'] & \dots & \overline \calA \arrow[ur] \arrow[ul] &
    \end{tikzcd}
  \end{center}

  To prove that the diagram $F$ has a commuting cocone in $\BB$ we have to construct
  an object $\overline \calV \in \Ob(\BB)$ and morphisms $\overline f_i : \overline \calU \to \overline \calV$, $i \in T$,
  so that the diagram analogous to the above one commutes. The idea we are going to implement is straightforward:
  we shall start with a substructure $\calV = (V, L^\calV)$ of $\calU$ induced by $V = \bigcup_{i \in T} e_i(\overline U)$.
  We shall then identify some convenient $v_1, \ldots, v_n \in V$, prove that
  $\calV' = (V, L^\calV, v_1, \ldots, v_n) \in \Ob(\DD)$ and put $\overline \calV = G(\calV')$
  at the tip of the commuting cocone in $\BB$. The morphisms
  $\overline f_i : \overline \calU \to \overline \calV$ will be appropriate modifications of
  the codomain restrictions of $e_i$, $i \in T$. The trickiest part in the entire construction is the identification
  of $v_1, \ldots, v_n \in V$ that can act as constants in $\calV'$. Since the cocone morphisms $\overline f_i$
  are going to be the codomain restrictions of $e_i$ (modulo renaming of elements), in order to identify the elements of $U$ that
  can act as constants in $\calV'$ we have to ensure that
  $$
    e_i(u_m, u_1, \ldots, u_n) = e_j(u_m, u_1, \ldots, u_n),
  $$
  for all $i, j \in T$ and $1 \le m \le n$. (Recall that $(u_m, u_1, \ldots, u_n)$, $1 \le m \le n$, are the constants
  of $\calU'$ in disguise.) Once this is ensured we will take
  $$
    v_m = e_{t_0}(u_m, u_1, \ldots, u_n), \quad 1 \le m \le n,
  $$
  for an arbitrary but fixed $t_0 \in T$.
  
  In order to carry out this program we need the notion of the connected component of a binary digraph
  (see the discussion preceding Theorem~\ref{bigrd.thm.1}).
  A \emph{walk} between two elements $x$ and $y$ of the top row of a binary digraph 
  consists of some vertices $x = t_0$, $t_1$, \dots, $t_k = y$ of the top row, some vertices
  $s_1$, \dots, $s_k$ of the bottom row, and arrows $s_{j} \to t_{j-1}$ and $s_{j} \to t_{j}$, $1 \le j \le k$:
  \begin{center}
    \begin{tikzcd}
      \llap{$x = \mathstrut$}t_0 & t_1 & \dots & t_{k-1} & t_k\rlap{$\mathstrut = y$} \\
      s_1 \arrow[u] \arrow[ur] & s_2 \arrow[u] \arrow[ur] & \dots \arrow[u] \arrow[ur] & s_k \arrow[u] \arrow[ur]
    \end{tikzcd}
  \end{center}
  A binary digraph is \emph{connected} if there is a walk between any pair of distinct vertices of the top row.
  A \emph{connected component} of a binary digraph $\Delta$ is a maximal (with respect to inclusion) set $S$ of vertices
  of the top row such that there is a walk between any pair of distinct vertices from~$S$.
  (Note that $s_j$'s are not required to be distinct.)

  Let $S \subseteq T$ be a connected component of $\Delta$ and let us show that
  $$
    e_i(u_m, u_1, \ldots, u_n) = e_j(u_m, u_1, \ldots, u_n),
  $$
  for all $i, j \in S$ and $1 \le m \le n$.
  Take any $i, j \in S$. Since~$S$ is a connected component of~$\Delta$,
  there exist $i = t_0$, $t_1$, \ldots, $t_k = j$ in $S$,
  $s_1$, \ldots, $s_k$ in $B$ and arrows $p_j : s_{j} \to t_{j-1}$ and $q_j : s_{j} \to t_{j}$, $1 \le j \le k$:
  \begin{center}
    \begin{tikzcd}
      \llap{$i = \mathstrut$}t_0 & t_1 & \ldots & t_{k-1} & t_k\rlap{$\mathstrut = j$} \\
      s_1 \arrow[u, "p_1"] \arrow[ur, "q_1"'] & s_2 \arrow[u] \arrow[ur] & \ldots \arrow[u] \arrow[ur] & s_k \arrow[u, "p_k"] \arrow[ur, "q_k"']
    \end{tikzcd}
  \end{center}
  Let $F(p_j) = \overline w_j$ and $F(q_j) = \overline v_j$, $1 \le j \le k$. Then
  \begin{align*}
    e_i&(u_m, u_1, \ldots, u_n) =\\
      &= e_{t_0}(u_m, u_1, \ldots, u_n) && [i = t_0] \\
      &= e_{t_0}(\overline w_1(a_m, a_1, \ldots, a_n)) && [\overline w_1(a_m, a_1, \ldots, a_n) = (u_m, u_1, \ldots, u_n)]\\
      &= e_{t_1}(\overline v_1(a_m, a_1, \ldots, a_n)) && \\
      &= e_{t_1}(u_m, u_1, \ldots, u_n) && [\overline v_1(a_m, a_1, \ldots, a_n) = (u_m, u_1, \ldots, u_n)].
  \end{align*}
  because $(e_i)_{i \in T}$ is a commuting cocone over $F$ whence
  \begin{center}
    \begin{tikzcd}
      & \calU & 
    \\
       \overline \calU \arrow[ur, "e_{t_0}"] &  & \overline \calU \arrow[ul, "e_{t_1}"'] 
    \\
       \overline \calA \arrow[urr, "\overline v_1 = F(q_1)"' near start] \arrow[u, "F(p_1) = \overline w_1"] 
    \end{tikzcd}
  \end{center}
  (recall that $\overline v_1$ and $\overline w_1$ are morphisms in $\BB$).
  Analogously, $e_{t_1}(u_m, u_1, \ldots, u_n) = e_{t_2}(u_m, u_1, \ldots, u_n)$
  and so on. Thus,
  \begin{align*}
    e_i(u_m, u_1, \ldots, u_n) &= e_{t_0}(u_m, u_1, \ldots, u_n) = \dots \\
    \ldots &= e_{t_k}(u_m, u_1, \ldots, u_n) = e_j(u_m, u_1, \ldots, u_n).
  \end{align*}

  In contrast to that, if $S, S' \subseteq T$ are two distinct connected components of $\Delta$ we cannot guarantee that
  $e_i(u_m, u_1, \ldots, u_n) = e_j(u_m, u_1, \ldots, u_n)$ for $i \in S$ and $j \in S'$. We shall now modify the commuting
  cocone $(e_i : \overline \calU \to \calU)_{i \in T}$ so as to ensure that this is always the case.

  Let $\{S_\alpha : \alpha < \lambda\}$ be the set of all the connected
  components of $\Delta$, where $S_\alpha \subseteq T$, $\alpha < \lambda$.
  Take any ordinal $\alpha$ such that $0 < \alpha < \lambda$. Let $i \in S_0$ and $j \in S_\alpha$
  be arbitrary and let $p : s \to i$ and $q : s' \to j$ be two arrows, one in the part of $\Delta$
  determined by~$S_0$ and the other one in the part of $\Delta$ determined by~$S_\alpha$.
  Let $\overline w = F(p)$ and $\overline v = F(q)$:
  \begin{center}
    \begin{tikzcd}[execute at end picture={
            \draw (1,-2) rectangle (4,0.5);
            \draw (-3.75,-2) rectangle (-0.75,0.5);
        }]
        & & \calU & &
      \\
        \overline \calU \arrow[urr, bend left] & \overline \calU \arrow[ur, "e_i"] &  & \overline \calU \ar[ul, "e_j"] & \overline \calU \arrow[ull, bend right]
      \\
        \overline \calA \arrow[u] \arrow[ur, bend right, "\overline w"]  & \rlap{$S_0$} & & \overline \calA \arrow[ur, bend right] \arrow[u, "\overline v"] & \rlap{$S_\alpha$}
    \end{tikzcd}
  \end{center}
  Let $\overline A_0 = \{(a_m, a_1, \ldots, a_n) : 1 \le m \le n\}$ and let
  $\overline \calA_0 = \overline \calA[\,\overline A_0]$ be the substructure of $\overline \calA$ induced by $\overline A_0$.
  Then $\restr{e_i \circ \overline w}{\overline A_0} : \overline \calA_0 \hookrightarrow \calU$ and
  $\restr{e_j \circ \overline v}{\overline A_0} : \overline \calA_0 \hookrightarrow \calU$ are two distinct embeddings
  of the same finite structure $\overline \calA_0$ into $\calU$.
  Since $\calU$ is ultrahomogeneous there is an $h_\alpha \in \Aut(\calU)$ such that
  $\restr{e_i \circ \overline w}{\overline A_0} = \restr{h_\alpha \circ e_j \circ \overline v}{\overline A_0}$.
  Put $h_0 = \id_{\calU}$ and let $\alpha(i)$ be the unique ordinal such that $i \in S_{\alpha(i)}$.
  Analogously, let $\overline \calU_0 = \overline \calU[\, \overline U_0]$ where
  $$
    \overline U_0 = \{(u_m, u_1, \ldots, u_n) : 1 \le m \le n\}.
  $$
  Then $\restr{h_{\alpha(i)} \circ e_i}{\overline U_0} = \restr{h_{\alpha(j)} \circ e_j}{\overline U_0}$ for $i, j \in T$
  (this follows from the fact that $\overline w(\overline A_0) = \overline U_0 = \overline v(\overline A_0)$ and
  $\restr{\overline w}{\overline A_0} = \restr{\overline v}{\overline A_0}$),
  so $(h_{\alpha(i)} \circ e_i : \overline \calU \to \calU)_{i \in T}$ is
  still a commuting cocone over~$F$ in $\CC$:
  \begin{center}
    \begin{tikzcd}[execute at end picture={
            \draw (2.45,-2) rectangle (5.45,0.5);
            \draw (-5.25,-2) rectangle (-2.25,0.5);
        }]
    & & \calU \arrow[r, "h_0 = \id"] & \calU & \calU \ar[l, "h_\alpha"'] & & 
  \\
    \overline \calU \arrow[urr, bend left] & \overline \calU \arrow[ur, "e_i"] & & &  & \overline \calU \arrow[ul, "e_j"] & \overline \calU \arrow[ull, bend right]
  \\
    \overline \calA \arrow[u] \arrow[ur, bend right, "\overline w"]  & \rlap{$S_0$} & & & & \overline \calA \arrow[ur, bend right] \arrow[u, "\overline v"] & \rlap{$S_\alpha$}
    \end{tikzcd}
  \end{center}
  and for this commuting cocone we have that
  $$
    h_{\alpha(i)} \circ e_i(u_m, u_1, \ldots, u_n) = h_{\alpha(j)} \circ e_j(u_m, u_1, \ldots, u_n),
  $$
  for all $i, j \in T$ and $1 \le m \le n$.

  Hence, without loss of generality we can assume that $(e_i : \overline \calU \to \calU)_{i \in T}$
  is a commuting cocone over~$F$ in $\CC$ such that
  \begin{equation}\label{rdbas.eq.e}
    e_i(u_m, u_1, \ldots, u_n) = e_j(u_m, u_1, \ldots, u_n)
  \end{equation}
  for all $i, j \in T$ and $1 \le m \le n$.
  Let $V = \bigcup_{i \in T} e_i(\overline U)$ and let $\calV = \calU[V]$ be the substructure of $\calU$ induced by~$V$.
  Take an arbitrary but fixed $t_0 \in T$ and put
  \begin{equation}\label{rdbas.eq.v}
    v_m = e_{t_0}(u_m, u_1, \ldots, u_n) \in V, \quad 1 \le m \le n.
  \end{equation}
  Let $\calV' = (V, L^\calV, v_1, \ldots, v_n)$. To show that $\calV' \in \Ob(\DD)$ we have to show that
  $\calV'$ embeds into $\calU'$. Recall, first, that $\overline \calU \cong \calU$ and that
  $\overline \calU[\,\overline U_0]$ is isomorphic to $\calU[u_1, \ldots, u_n]$
  where the isomorphism is $\phi : \overline U_0 \to \{u_1, \ldots, u_n\}$
  given by
  $$
    \phi(u_m, u_1, \ldots, u_n) = u_m, \quad 1 \le m \le n.
  $$
  On the other hand, $\overline \calU[\,\overline U_0]$ is isomorphic to $\calV[v_1, \ldots, v_n]$
  where the isomorphism is $\restr{e_{t_0}}{\overline U_0}$. Therefore,
  $\calU[u_1, \ldots, u_n]$ and $\calV[v_1, \ldots, v_n]$ are isomorphic and the isomorphism is
  $\psi : \{v_1, \ldots, v_n\} \to \{u_1, \ldots, u_n\} : v_i \mapsto u_i$, $1 \le i \le n$. Since $\calU$ is
  ultrahomogeneous there is a $\hat\psi \in \Aut(\calU)$ which extends~$\psi$, so
  $\restr{\hat\psi}{V}$ is an embedding of $\calV$ into $\calU$ which takes $v_i$ to $u_i$, $1 \le i \le n$.
  In other words, $\restr{\hat\psi}{V} : \calV' \hookrightarrow \calU'$ whence $\calV' \in \Ob(\DD)$.

  Let us now construct a commuting cocone over $F$ in $\BB$. Let $\overline \calV = G(\calV') \in \Ob(\BB)$.
  To define the morphisms $\overline \calU \to \overline \calV$ consider, first, the mappings
  $f_i : \calU' \to \calV'$ in $\DD$, $i \in T$, defined by:
  $$
    f_i(x) = e_i(x, u_1, \ldots, u_n).
  $$
  Each $f_i$ is an embedding of $\calU$ into $\calV$ such that for $1 \le m \le n$:
  \begin{align*}
    f_i(u_m) &= e_i(u_m, u_1, \ldots, u_n)\\
             &= e_{t_0}(u_m, u_1, \ldots, u_n) && \text{by \eqref{rdbas.eq.e}}\\
             &= v_m. && \text{by \eqref{rdbas.eq.v}}
  \end{align*}
  Hence $f_i : \calU' \hookrightarrow \calV'$, $i \in T$, is a morphism in $\DD$.
  Finally, for each $i \in T$ put $\overline f_i = G(f_i) : \overline \calU \to \overline \calV$
  and let us show that$(\overline f_i : \overline \calU \to \overline \calV)_{i \in T}$ is a commuting cocone over~$F$ in $\BB$.
  Assume that in the original cocone over $F$ we have that $e_i \circ \overline w = e_j \circ \overline v$
  where $\overline w = G(w)$ for some $w : \calA' \hookrightarrow \calU'$ and
  $\overline v = G(v)$ for some $v : \calA' \hookrightarrow \calU'$:
  \begin{center}
    \begin{tikzcd}
    & & \calU & & \CC
  \\
    \overline \calU \arrow[urr] & \overline \calU \arrow[ur, "e_i"'] & \dots & \overline \calU \arrow[ul, "e_j"] & \overline \calU \arrow[ull]
  \\
    \overline \calA \arrow[u] \arrow[ur] & \overline \calA \arrow[urr, "\overline v" near start] \arrow[u, "\overline w"'] & \dots & \overline \calA \arrow[ur] \arrow[ul] &
    \end{tikzcd}
  \end{center}
  Then
  \begin{align*}
    \overline f_i \circ \overline w (x, a_1, \ldots, a_n)
      &= \overline f_i (w(x), u_1, \ldots, u_n)\\
      &= (f_i(w(x)), v_1, \ldots, v_n)\\
      &= (e_i(w(x), u_1, \ldots, u_n), v_1, \ldots, v_n)\\
      &= (e_i \circ \overline w (x, a_1, \ldots, a_n), v_1, \ldots, v_n)\\
      &= (e_j \circ \overline v (x, a_1, \ldots, a_n), v_1, \ldots, v_n)\\
      &= (e_j(v(x), u_1, \ldots, u_n), v_1, \ldots, v_n)\\
      &= (f_j(v(x)), v_1, \ldots, v_n)\\
      &= \overline f_j (v(x), u_1, \ldots, u_n)\\
      &= \overline f_j \circ \overline v (x, a_1, \ldots, a_n).
  \end{align*}
  This completes the proof.
\end{proof}

\begin{COR}
  $(\QQ, \Boxed<, 0)$ has finite big Ramsey degrees.
\end{COR}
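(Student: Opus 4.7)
The plan is to obtain this as an immediate application of Theorem~\ref{rdbas.thm.ccc}. I would take $L = \{<\}$, so that $(\QQ, \Boxed<)$ is an $L$-structure, and let $c_1$ be a new constant symbol not in $L$. Setting $L' = L \cup \{c_1\}$, the structure $(\QQ, \Boxed<, 0)$ is precisely the $L'$-structure obtained by interpreting $c_1$ as $0 \in \QQ$.

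The two hypotheses required by Theorem~\ref{rdbas.thm.ccc} are standard and well-established. First, $(\QQ, \Boxed<)$ is a countably infinite ultrahomogeneous $L$-structure: every isomorphism between two finite subchains of $\QQ$ extends to an automorphism of $(\QQ, \Boxed<)$, which is a classical result of Cantor. Second, $(\QQ, \Boxed<)$ has finite big Ramsey degrees, which is the theorem of Devlin~\cite{devlin} already cited in the paper.

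Therefore Theorem~\ref{rdbas.thm.ccc}, applied with $n = 1$, $\calU = (\QQ, \Boxed<)$, and $u_1 = 0$, yields immediately that $\calU' = (\QQ, \Boxed<, 0)$ has finite big Ramsey degrees. There is no genuine obstacle in this step; the entire content of the corollary has been absorbed into the preceding theorem, so the proof is essentially a one-line invocation. (If one wished to extend this to the $116$ essential reducts of $(\QQ, \Boxed<, 0)$ mentioned in the surrounding discussion, one would then combine this corollary with Theorem~\ref{rdbas.thm.reducts}, provided each such reduct can be defined in $(\QQ, \Boxed<, 0)$ by quantifier-free formulas.)
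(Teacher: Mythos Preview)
Your proposal is correct and matches the paper's own proof exactly: the corollary is obtained as an immediate application of Theorem~\ref{rdbas.thm.ccc} to $\calU = (\QQ, \Boxed<)$ with $n=1$ and $u_1 = 0$, using Devlin's result that $(\QQ, \Boxed<)$ has finite big Ramsey degrees and the classical fact that it is ultrahomogeneous. Your added remarks (Cantor for ultrahomogeneity, the pointer to Theorem~\ref{rdbas.thm.reducts} for the reducts) are accurate and anticipate the very next corollary in the paper.
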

\begin{proof}
  Immediate from the fact that $(\QQ, \Boxed<)$ has finite big Ramsey degrees~\cite{devlin} and
  Theorem~\ref{rdbas.thm.ccc}.
\end{proof}

All the essential reducts of $(\QQ, \Boxed<, 0)$, and much more,
were classified by Junker and Ziegler in~\cite{junker-ziegler}.
It turns out that there are 116 of them and that they are all defined by quantifier-free formulas in $(\QQ, \Boxed<, 0)$.
So Theorem~\ref{rdbas.thm.reducts} applies and we have:

\begin{COR}
  All of the 116 essential reducts of $(\QQ, \Boxed<, 0)$ have finite big Ramsey degrees.
\end{COR}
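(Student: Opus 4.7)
The plan is to combine three ingredients: the preceding corollary (which establishes that $(\QQ, \Boxed<, 0)$ has finite big Ramsey degrees), the Junker--Ziegler classification~\cite{junker-ziegler} (which asserts that $(\QQ, \Boxed<, 0)$ has exactly 116 essential reducts and that each of them is definable in $(\QQ, \Boxed<, 0)$ by a quantifier-free formula), and Theorem~\ref{rdbas.thm.reducts} (which transports the property of having finite big Ramsey degrees from a countably infinite relational structure to any of its quantifier-free reducts). The argument is then essentially a bookkeeping exercise, parallel to the three earlier corollaries in this section.

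First I would replace the constant $0$ by the unary predicate $P_0$ with $P_0(x) \Leftrightarrow (x = 0)$, so as to work in the purely relational language $L = \{<, P_0\}$ required by Theorem~\ref{rdbas.thm.reducts}. The resulting structure $\calS^* = (\QQ, \Boxed<, P_0)$ has the same embeddings and the same automorphism group as $(\QQ, \Boxed<, 0)$, hence shares all of its big Ramsey degrees; so by the preceding corollary $\calS^*$ has finite big Ramsey degrees. Moreover, quantifier-free definability in $\calS^*$ coincides with quantifier-free definability in $(\QQ, \Boxed<, 0)$, so the Junker--Ziegler list of 116 essential reducts carries over verbatim.

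Next, for each essential reduct $\calS$ of $(\QQ, \Boxed<, 0)$ the Junker--Ziegler classification provides a relational language $M$ and a finite set $\Phi$ of quantifier-free $L$-formulas defining $\calS$ in $\calS^*$. Let $\KK^*$ be the hereditary class of all finite and countably infinite $L$-structures that embed into $\calS^*$, and let $\KK$ be the class of all $M$-structures definable by $\Phi$ inside $\KK^*$. Then $\calS^* \in \KK^*$ is universal for $\KK^*$, and $\calS$ is precisely the $M$-structure defined by $\Phi$ in $\calS^*$. An application of Theorem~\ref{rdbas.thm.reducts} then yields directly that $\calS$ has finite big Ramsey degrees.

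There is no real obstacle here; the only point that requires a brief comment is the passage from the signature $\{<,0\}$ (which is not purely relational) to the purely relational signature $\{<, P_0\}$ that Theorem~\ref{rdbas.thm.reducts} is stated for, and as indicated above this is handled by encoding the constant as a unary predicate without affecting either the embeddings or the class of quantifier-free definable reducts.
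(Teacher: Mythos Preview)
Your approach matches the paper's: cite the preceding corollary, invoke Junker--Ziegler for the quantifier-free definability of all 116 reducts, and apply Theorem~\ref{rdbas.thm.reducts}. You are in fact more explicit than the paper in flagging the constant-to-predicate conversion, since Theorem~\ref{rdbas.thm.reducts} is stated only for relational languages.

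There is, however, a gap in that conversion step. You assert that $(\QQ, <, P_0)$ ``shares all of its big Ramsey degrees'' with $(\QQ, <, 0)$ and hence has finite big Ramsey degrees. This is correct for finite substructures $\calA$ with $P_0^{\calA} \ne \emptyset$, which correspond exactly to substructures in the constant language, with the same hom-sets into the universal object. But the predicate language admits \emph{more} finite substructures --- those with $P_0^{\calA} = \emptyset$ --- and these have no analogue in the constant language, so the preceding corollary tells you nothing about $T(\calA, (\QQ,<,P_0))$ for such~$\calA$. Since such $\calA^*$ certainly arise in $U^{-1}(\calA)$ (for any reduct that forgets~$0$, for instance), the hypothesis of Theorem~\ref{rdbas.thm.reducts} is not yet established for your~$\calS^*$. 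The repair is short but not a triviality: further expand by unary predicates $N(x) \Leftrightarrow x<0$ and $P(x) \Leftrightarrow x>0$; in $(\QQ,<,P_0,N,P)$ every finite substructure with empty $P_0$ is, upon inserting a $P_0$-point between its $N$-block and its $P$-block, in canonical hom-set bijection (compatible with self-embeddings of the universal object) with one that has nonempty $P_0$ and hence matches a constant-language structure. Theorem~\ref{sbrd.thm.big1}, applied to the forgetful expansion that drops $N$ and $P$, then transports finiteness of big Ramsey degrees back down to $(\QQ,<,P_0)$.
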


\section{Acknowledgements}

The author would like to thank two anonymous referees for thorough reading of the manuscript and
for many helpful suggestions on how to correct the inaccuracies in the previous version of the paper.

The author gratefully acknowledges the financial support of the Ministry of Education, Science and Technological Development
of the Republic of Serbia (Grant No.\ 451-03-68/2020-14/200125).

\end{document}